\documentclass[11pt,leqno,twoside]{amsart}
\usepackage{amssymb,amsmath,amsthm,soul,color}
\usepackage{t1enc}
\usepackage[cp1250]{inputenc}
\usepackage{a4,indentfirst,latexsym}
\usepackage{graphics}
\usepackage{mathrsfs}
\usepackage{cite,enumitem,graphicx}
\usepackage[colorlinks=true,urlcolor=blue,
citecolor=red,linkcolor=blue,linktocpage,pdfpagelabels,
bookmarksnumbered,bookmarksopen]{hyperref}
\usepackage[english]{babel}
\usepackage[left=2.61cm,right=2.61cm,top=2.72cm,bottom=2.72cm]{geometry}
\usepackage[metapost]{mfpic}
\usepackage[hyperpageref]{backref}
\usepackage[colorinlistoftodos]{todonotes}
\usepackage[normalem]{ulem}

\makeatletter
\providecommand\@dotsep{5}
\def\listtodoname{List of Todos}
\def\listoftodos{\@starttoc{tdo}\listtodoname}
\makeatother

\numberwithin{equation}{section}
\newtheorem{Th}{Theorem}[section]
\newtheorem{Prop}[Th]{Proposition}
\newtheorem{Lem}[Th]{Lemma}

\newtheorem{Rem}[Th]{Remark}

   \newcommand{\eps}{\varepsilon}

   \def\ba{{\bf A}}

   \def\Z{\mathbb{Z}}

   \def\R{\mathbb{R}}


   \def\J{\mathcal{J}}
   
   \def\cN{\mathcal{N}}

   \def\D{\mathcal{D}}

   \def\RT{\mathbb{R}^3}
   \def\n{\nabla}
	\def\de{\partial}

\newcommand{\cM}{{\mathcal M}}

\newcommand{\cC}{{\mathcal C}}

\title[Ground states for KGMP systems]{Vortex ground states for Klein-Gordon-Maxwell-Proca type systems}

\author[P. d'Avenia]{Pietro d'Avenia}
\author[J. Mederski]{Jaros\l aw Mederski}
\author[A. Pomponio]{Alessio Pomponio}

\address[P. d'Avenia and A. Pomponio]{\newline\indent
Dipartimento di Meccanica, Matematica e Management
\newline\indent 
Politecnico di Bari
\newline\indent
Via Orabona 4,  70125  Bari, Italy}
\email{\href{mailto:pietro.davenia@poliba.it}{pietro.davenia@poliba.it}}
\email{\href{mailto:alessio.pomponio@poliba.it}{alessio.pomponio@poliba.it}}

\address[J. Mederski]{\newline\indent 
Faculty of Mathematics and Computer Science
\newline\indent 
Nicolaus Copernicus University
\newline\indent
ul. Chopina 12/18, 87-100 Toru\'n}
\email{\href{mailto:jmederski@mat.umk.pl}{jmederski@mat.umk.pl}}

\thanks{P. d'Avenia and A. Pomponio are partially supported by  a grant of the group GNAMPA of INdAM. J.M. was partially supported by the National Science Centre, Poland (Grant No. 2013/09/B/ST1/01963 and Grant No. 2014/15/D/ST1/03638)}

\subjclass[2010]{35Q60, 35J47, 35J50, 35A01.}
\date{\today}
\keywords{Elliptic systems, Klein-Gordon-Maxwell-Proca system, Klein-Gordon-Maxwell system, ground states, vortex solutions}

\begin{document}
\begin{abstract} We look for
three dimensional vortex-solutions, which have finite energy and are stationary solutions of Klein-Gordon-Maxwell-Proca type systems of equations. We prove the existence of three dimensional cylindrically symmetric vortex-solutions having a least possible energy among all symmetric solutions. Moreover we  show that if the Proca mass disappears then the solutions tends to a solution of the Klein-Gordon-Maxwell system.
\end{abstract}

\maketitle

\section{Introduction}

In the last years a wide literature has been devoted to gauge field theories. In a gauge theory the action is invariant under a continuous symmetry group that depends on spacetime. Gauge theories are used to describe theoretically fundamental forces of nature as electromagnetism, weak force and strong force and their interaction with matter field. These models are also verified experimentally with a high precision. Klein-Gordon-Maxwell type  systems, in which we are interested, fall within this framework.  In this case, the starting point is the nonlinear Klein-Gordon Lagrangian density
\begin{equation}
\label{eq:LKG}
\mathcal{L}_{\rm KG}(\psi)
=\frac{1}{2}[|\partial_t\psi|^2-|\nabla\psi|^2-m^2|\psi|^2]+F(x,\psi),
\end{equation}
where $(t,x)\in \mathbb{R}\times\mathbb{R}^3$, $\psi:\mathbb{R}\times\mathbb{R}^3\to \mathbb{C}$ represents the matter field, $m$ is a constant (the mass of $\psi$) and $F:\mathbb{R}^3\times\mathbb{C} \to \mathbb{R}$ is a nonlinearity, which can represent the interaction among many particles, such that $F(x,0)=0$ and $F(x,\psi)=F(x,|\psi|)$.\\
To study the interaction of the field $\psi$ with its own electromagnetic field, whose gauge potential is given by $(\phi,{\bf A})$ ($\phi:\mathbb{R}\times\mathbb{R}^3\to \mathbb{R}$ and ${\bf A}:\mathbb{R}\times\mathbb{R}^3\to \mathbb{R}^3$ are respectively the electric and the magnetic potentials), it is usual to consider the gauge covariant derivatives instead of the classical ones, replacing in \eqref{eq:LKG} the time derivative $\partial_t$ and the spatial derivatives $\nabla$ respectively with $\partial_t +iq\phi$ and $\nabla - iq{\bf A}$, where $q$ is a coupling constant (see \cite{F,N1,N2}). Thus we get
\[
\tilde{\mathcal{L}}_{\rm KG}(\psi,\phi,{\bf A})
=\frac{1}{2}\left[|\partial_t\psi+iq\phi\psi|^2-|\nabla\psi- iq{\bf A}\psi|^2-m^2|\psi|^2\right]+F(x,\psi).
\]
To this Lagrangian density we have to add the electromagnetic one in the vacuum. In this paper we consider the Maxwell and Maxwell-Proca Lagrangian densities
\[
\mathcal{L}_{\rm MP}(\phi,{\bf A})
=\underbrace{\frac{1}{2}[|\partial_t{\bf A}+\nabla\phi|^2-|\nabla\times{\bf A}|^2 ]}_{\text{Maxwell Lagrangian density}} + \underbrace{\frac{\mu^2}{2} [|\phi|^2-|{\bf A}|^2]}_{\text{mass term}}.
\]
The Maxwell-Proca Lagrangian density is a {\em massive version} of the Maxwell one and coincides with it if $\mu=0$. This model was introduced by Alexandre Proca (see \cite{Proca1,Proca2,Proca3,Proca4,Proca5}) under the de Broglie influence as a generalization of the Maxwell's one.  The {\em mass term} is due to the effects of a photon rest mass (which can be small but still nonzero) and the invariance of electrodynamics under transformations of special relativity is preserved (see \cite{GN} and references therein for more details). 
Hence the total Lagrangian density is given by
\[
\mathcal{L}(\psi,\phi,{\bf A})=\tilde{\mathcal{L}}_{\rm KG}(\psi,\phi,{\bf A})+\mathcal{L}_{\rm MP}(\phi,{\bf A})
\]
and the Euler-Lagrange equations of the total action
\[
\mathcal{S}(\psi,\phi,{\bf A})=\iint \mathcal{L}(\psi,\phi,{\bf A})\ dxdt
\]
are
\begin{equation}
\label{eq:KGM1}
\begin{cases}
(\partial_t+iq\phi)^2\psi-(\nabla-iq{\bf A})^2\psi +m^2 \psi
= \partial_{\psi} F(x,\psi),\\
-\nabla\cdot(\partial_t{\bf A}+\nabla\phi) + \mu^2\phi
=- q[\mathfrak{Im}(\partial_t\psi\cdot\bar{\psi})+q\phi|\psi|^2],\\
\nabla\times\nabla\times {\bf A}+\mu^2{\bf A} + \partial_t(\partial_t{\bf A}+\nabla\phi)
=q[\mathfrak{Im}(\nabla\psi\cdot\bar{\psi})-q{\bf A}|\psi|^2],
\end{cases}
\qquad
\hbox{in }\mathbb{R}\times\mathbb{R}^3.
\end{equation}

A very interesting problem regarding system \eqref{eq:KGM1} is the existence of solitary wave solutions, namely solutions whose finite energy travels as a localized packet. These solutions are strictly joint with {\em solitons}, i.e. solitary waves that exhibit a very strong form of stability.

Starting from the pioneering paper \cite{BF2002}, many papers have been devoted to the study of this type of systems and, in particular, to the existence of stationary solutions of \eqref{eq:KGM1}, i.e. solution of the form $\psi(t,x)=u(x)e^{iS(t,x)}$, with $u:\mathbb{R}^3\to\mathbb{R}$ and $S:\mathbb{R}\times\mathbb{R}^3\to\mathbb{R}$, especially in the purely electrostatic case ($\phi=\phi(x)$ and ${\bf A}={\bf 0}$), with $\mu=0$. Existence and non-existence results, under different assumptions on the nonlinearity and on $m$ and $\omega$ are present in \cite{DM,DM2,APP}, while the existence of a ground state has been considered in \cite{AP,Wang}. 
Moreover we mention \cite{DPS09,DPS10}, for the bounded domain case, and \cite{CGM,DHV,GMP,HT,HW}, for the case $\ba= {\bf 0}$ and $\mu\neq 0$ on manifolds. Finally Klein-Gordon equations coupled with Born-Infeld type equations have been treated in \cite{BDP,DP,yu}.

In this paper, instead, we are interested in the electromagnetostatic case (nontrivial $\phi=\phi(x)$ and ${\bf A}={\bf A}(x)$) which has been studied in a smaller number of articles (see \cite{BBS,BF2009NA,BF2010CMP}), only for the Klein-Gordon-Maxwell system ($\mu=0$). In particular, from the physical point of view, it is important to find ground state solutions which are minimizers of the energy functional among a set of nontrivial solutions and, in this work, we find symmetric ground states for any $\mu\in\R$. Our results are also new when $\mu=0$. Besides we investigate the behavior of solutions when the Proca mass $\mu$ disappears. 

These lists of literature, of course, are not complete and we refer to \cite{BFbook} and references therein for a more exhaustive description. 

When we look for stationary solutions, the total action $\mathcal{S}$ depends on the unknowns $u,S,\phi,{\bf A}$ and its Euler-Lagrange equations, in the particular case $S(t,x)=S_0(x)-\omega t$ with $\omega\in\mathbb{R}$, $\phi=\phi(x)$ and ${\bf A}={\bf A}(x)$, are
\begin{equation}
\label{eq:KGM2}
\begin{cases}
-\Delta u + [m^2-(\omega-q\phi)^2] u + |\nabla S_0-q{\bf A}|^2 u
= \partial_u F(x,u),\\
\nabla\cdot[(\nabla S_0-q{\bf A})u^2] =0,\\
-\Delta\phi + \mu^2\phi
=q(\omega-q\phi)u^2,\\
\nabla\times\nabla\times {\bf A} +\mu^2{\bf A}
=q(\nabla S_0-q{\bf A})u^2,
\end{cases}
\qquad
\hbox{in }\mathbb{R}^3.
\end{equation}

Observe that,  for instance, considering $\n S_0(x)\neq {\bf 0}$ a.e. in $\RT$, we get that if $\ba={\bf 0}$, then the unique solution of \eqref{eq:KGM2} is the trivial one and so the purely electrostatic case is meaningless.

If $\mu=0$, moreover, the second equation in \eqref{eq:KGM2} follows from the fourth one. If $\mu\neq 0$, second and fourth equation in \eqref{eq:KGM2} imply $\operatorname{div} {\bf A} =  0$. Thus, while in the Klein-Gordon-Maxwell system ($\mu=0$) the {\em natural constraint} $\operatorname{div} {\bf A} = 0$ seems to be a technical requirement in order to avoid the problems related to the curl-curl operator in the last equation of \eqref{eq:KGM2} (see e.g. \cite{BFbook}), in the Klein-Gordon-Maxwell-Proca case ($\mu\neq 0$) we need to require this property in order to get solutions. Thus, system \eqref{eq:KGM2} is equivalent to
\begin{equation}
\label{eq:KGM3}
\begin{cases}
-\Delta u + [m^2-(\omega-q\phi)^2] u + |\nabla S_0-q{\bf A}|^2 u
= \partial_u F(x,u),\\
-\Delta\phi + \mu^2\phi
=q(\omega-q\phi)u^2,\\
\nabla\times\nabla\times {\bf A} +\mu^2{\bf A}
=q(\nabla S_0-q{\bf A})u^2,
\end{cases}
\qquad
\hbox{in }\mathbb{R}^3,
\end{equation}
with the additional constraint that $\operatorname{div}{\bf A}=0$ if $\mu\neq 0$. We will show (see Section \ref{se:vs}) that, including such a condition in our {\em setting}, we get a {\em natural constraint} and for this reason we can restrict to consider system \eqref{eq:KGM3} for all $\mu \in \R$.

Hence, as e.g. in \cite{BF2009NA,BF2010CMP},  we set
\begin{equation}\label{Sigma}
\Sigma=\left\{(x_1,x_2,x_3)\in \mathbb{R}^3 : x_1=x_2=0 \right\}
\end{equation}
and we consider the maps
\begin{equation}
\label{theta}
\theta:\mathbb{R}^3\setminus \Sigma \to \mathbb{R}/(2\pi\mathbb{Z}),
\qquad
\theta(x_1,x_2,x_3)=\mathfrak{Im}\log (x_1+ix_2),
\end{equation}
and $S_0(x)=\ell\theta(x)$, with $\ell\in\mathbb{Z}\setminus\{0\}$. Thus we look for vortex solutions $\psi(t,x)=e^{i(\ell\theta(x)-\omega t)} u(x)$ of \eqref{eq:KGM1} and so $(u,\phi,{\bf A})$'s that solve
\begin{equation}
\label{P}
\tag{$\mathcal{S}_{\mu}$}
\begin{cases}
-\Delta u + [m^2-(\omega-q\phi)^2] u + |\ell \nabla\theta-q{\bf A}|^2 u= f(x,u),\\
-\Delta\phi + \mu^2\phi =q(\omega-q\phi)u^2,\\
\nabla\times\nabla\times {\bf A}  +\mu^2{\bf A} =q(\ell \nabla\theta-q{\bf A})u^2,
\end{cases}
\qquad
\hbox{in }\mathbb{R}^3,
\end{equation} 
with the additional condition $\operatorname{div}{\bf A}=0$ for $\mu\neq 0$. In the following we refer to \eqref{P} as Klein-Gordon-Maxwell-Proca system if $\mu\neq 0$, otherwise, as Klein-Gordon-Maxwell.

%
%

In particular, in this paper we are interested in the existence of {\em symmetric ground state} solutions of \eqref{P}, namely solutions that minimize the energy on suitable Nehari-type {\em manifolds} (see Section \ref{section:Th1} and Section \ref{se:MP} for more details): this kind of solutions are often a good starting point in order to get solitons.
We emphasize that, with respect to previous papers on this topic, we prove our existence results for an arbitrary $\mu\in\R$, under general weak assumptions on the nonlinearity $f$, without any constraint on the $L^2$-norm of $u$ ({\em the total charge}) and with a nontrivial magnetostatic potential $\ba$. Moreover, up to our knowledge, another novelty is the study of the behaviour of these ground states as $\mu\to 0$ according to the physical meaning of such a parameter.

Let $0<\omega^2<m^2$ and
assume that the function $f:\mathbb{R}^3\times\mathbb{R} \to \mathbb{R}$ satisfies:
\begin{enumerate}[label=(F\arabic*),ref=F\arabic*]
	\item \label{SW1} 
	for any $s\in\R$, $f(\cdot,s)$ is measurable, depends on $(r,x_3)$ and is $\Z$-periodic with respect to $x_3$, $f(x,\cdot)$ continuous for a.e. $x\in\R^3$ and
 there are $a>0$ and $2<p<6=2^*$ such that
	$$|f (x,s)|\leq a(1 + |s|^{p-1}), \quad
	\hbox{ for a.e. $x\in\R^3$ and all $s\in\R$} ;$$ 
	\item \label{SW2} $f(x,s)= o(s)$ uniformly in $x$ as $|s|\to0$;
	\item \label{SW3}  $F(x,s)>0$ for all $s\neq 0$ and a.e. $x\in\R^3$, where $F(x,s)=\int_0^s f(x,t) dt$;
%
%
\item \label{SW4} $f(x,\cdot)$ is $\cC^1$ for a.e. $x\in\R^3$ and there is $\sigma>2$ such that for all $s\in\R$ and a.e. $x\in\R^3$
$$(\sigma-1)f(x,s)s \leq \partial_s f(x,s)s^2.$$
\end{enumerate}

Our first main result is the following
\begin{Th}\label{ThMain1}
Let
\begin{equation}
\label{condomegamsigma}
\begin{cases}
m>\omega, & \hbox{for } \sigma \geq 4,\\
\displaystyle(\sigma-2)m^2 - \frac{\sigma^2 - 4 \sigma +8}{4}\omega^2>0, & \hbox{for } 2< \sigma < 4.
\end{cases}
\end{equation}
If \eqref{SW1}-\eqref{SW4} hold, then
for every $\mu\in\R$, the system \eqref{P} admits a symmetric finite energy ground state solution $(u_{\mu},\phi_{\mu}, {\bf A}_{\mu})$.
\end{Th}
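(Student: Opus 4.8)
The plan is to follow the classical reduction-plus-Nehari scheme, carried out in the class $X$ of cylindrically symmetric functions $u=u(r,x_3)$, $r=\sqrt{x_1^2+x_2^2}$. First I would fix such a $u\in H^1(\RT)$ and solve the last two equations of \eqref{P}: rewriting them as $-\Delta\phi+(\mu^2+q^2u^2)\phi=q\omega u^2$ and $\n\times\n\times\ba+(\mu^2+q^2u^2)\ba=q\ell u^2\n\theta$, the associated bilinear forms are coercive on $D^{1,2}(\RT)$ (resp.\ $H^1(\RT)$ if $\mu\neq0$, and on the relevant curl-space when $\mu=0$), so Lax--Milgram produces unique $\phi_u$ and $\ba_u$; taking for $\ba_u$ the vortex ansatz $\ba_u(x)=a(r,x_3)(-x_2,x_1,0)$ is admissible, automatically gives $\div\ba_u=0$, and turns the curl--curl equation into a scalar one, which is how the difficulties of the curl--curl operator are bypassed. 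A comparison argument then provides the a priori bounds $0\le q\phi_u\le\omega$ and $|\ell\n\theta-q\ba_u|^2\le\ell^2|\n\theta|^2$, together with the monotonicity of $t\mapsto\phi_{tu}$; these bounds underlie every estimate that follows.

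Next I would substitute $(\phi_u,\ba_u)$ into the action to obtain a $\cC^1$ reduced functional $J_\mu$ on $X$ whose critical points are precisely the $u$'s yielding solutions $(u,\phi_u,\ba_u)$ of \eqref{P} (that the constraint $\div\ba=0$ is natural is exactly the content promised in Section \ref{se:vs}). Using the bounds of the previous step one checks that $J_\mu$ has the mountain-pass geometry; assumption \eqref{SW4} is equivalent to $s\mapsto f(x,s)/s^{\sigma-1}$ being nondecreasing on $(0,\infty)$ and hence yields the Ambrosetti--Rabinowitz-type inequality $\sigma F(x,s)\le f(x,s)s$, which gives that for every $u\in X\setminus\{0\}$ the function $t\mapsto J_\mu(tu)$ has a unique positive critical point $t(u)$, a global maximum. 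Here condition \eqref{condomegamsigma} is the essential ingredient: since the term $-(\omega-q\phi_{tu})^2(tu)^2$ carries the wrong sign and scales non-trivially in $t$, one needs $(\sigma-2)m^2-\frac{\sigma^2-4\sigma+8}{4}\omega^2>0$ (the condition being simply $m>\omega$ when $\sigma\ge4$) to keep the fibering map well-behaved. It then follows that the Nehari manifold $\cN_\mu=\{u\in X\setminus\{0\}:J_\mu'(u)[u]=0\}$ is a natural constraint, is bounded away from $0$, and $c_\mu:=\inf_{\cN_\mu}J_\mu>0$.

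Then I would take, by Ekeland's principle on $\cN_\mu$, a Palais--Smale sequence $(u_n)\subset\cN_\mu$ for $J_\mu$ at level $c_\mu$; its boundedness is obtained by bounding $J_\mu(u_n)-\tfrac1\sigma J_\mu'(u_n)[u_n]$ from below by a positive multiple of $\|u_n\|_{H^1}^2$, which is again where \eqref{condomegamsigma} is used. Since $f$ is $\Z$-periodic in $x_3$, both $J_\mu$ and $\cN_\mu$ are invariant under the translations $u\mapsto u(\cdot,\cdot+k)$, $k\in\Z$. If $(u_n)$ were vanishing, Lions' lemma would give $u_n\to0$ in $L^p(\RT)$ for $2<p<6$, and then the positivity of the quadratic part of $J_\mu$ (once more \eqref{condomegamsigma}) together with $J_\mu'(u_n)[u_n]=0$ would force $\|u_n\|\to0$, contradicting $u_n\in\cN_\mu$; hence there are $k_n\in\Z$ so that $\wt u_n:=u_n(\cdot,\cdot+k_n)$ does not vanish. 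Passing to a subsequence, $\wt u_n\rightharpoonup u\neq0$ in $X$, and by the weak continuity of $u\mapsto(\phi_u,\ba_u)$ and of $J_\mu'$ the limit is a nontrivial critical point of $J_\mu$, so $u\in\cN_\mu$; a Brezis--Lieb / weak lower semicontinuity argument yields $J_\mu(u)\le\liminf J_\mu(\wt u_n)=c_\mu$, whence $J_\mu(u)=c_\mu$ and $(u_\mu,\phi_\mu,\ba_\mu):=(u,\phi_u,\ba_u)$ is a symmetric finite-energy ground state.

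I expect the main obstacle to be the uniform control encoded in \eqref{condomegamsigma}. Because $u\mapsto\phi_u$ is nonlinear and $t\mapsto\phi_{tu}$ is not homogeneous, the fibering computation and the Palais--Smale boundedness estimate both generate a wrong-signed contribution whose magnitude is tied to $\sigma$ through \eqref{SW4}; forcing the reduced functional to behave correctly for \emph{all} $\mu\in\R$, and in particular in the range $2<\sigma<4$ where the naive condition $m>\omega$ is insufficient, is exactly what the sharp inequality in \eqref{condomegamsigma} achieves, and making this work simultaneously with the curl--curl reduction when $\mu=0$ is the technical heart of the proof.
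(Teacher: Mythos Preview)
Your overall scheme is the paper's: reduce to a one-variable functional $\J$ on $\hat H^1_\sharp$, work on its Nehari manifold, and recover compactness via $\Z$-translations in $x_3$. The one point that is more delicate than your outline suggests is the assertion that $t\mapsto\J(tu)$ has a \emph{unique} positive critical point; this does not follow from the Ambrosetti--Rabinowitz inequality alone, because the nonlocal terms $\phi_{tu}$ and $\ba_{tu}$ scale nonhomogeneously in $t$. What the paper actually does (Lemma~\ref{pr:nc}) is prove that $\cN$ is a natural constraint by showing $\partial_u(\J'(u)[u])[u]<0$ on $\cN$---equivalently, every fibering critical point is a strict local maximum, from which uniqueness follows a posteriori. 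This computation is the technical heart of the proof and requires introducing the auxiliary fields $\psi_u:=\tfrac12\Phi'(u)[u]$ and $\Psi_u:=\tfrac12\mathscr{A}'(u)[u]$, deriving the linear elliptic equations they satisfy, and establishing the comparison $0\le\psi_u\le\phi_u$ (Lemmas~\ref{le:phipsi} and~\ref{le:A'}); condition~\eqref{condomegamsigma} arises precisely from minimizing over $\psi_u$ the quadratic expression that this produces. By contrast, the pointwise bound $|\ell\n\theta-q\ba_u|^2\le\ell^2|\n\theta|^2$ and the monotonicity of $t\mapsto\phi_{tu}$ that you invoke are not used (only the integral inequalities of Lemma~\ref{le:A} appear), and it is the $\psi_u,\Psi_u$ calculus rather than these that drives the argument.
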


Observe that (\ref{SW4}) implies that $s\mapsto f (x,s)/|s|^{\sigma-1}$ is nondecreasing on $(-\infty, 0)$ and $(0,+\infty)$.

If $\sigma\geq 4$ then \eqref{SW4} can be weakened, in the spirit of \cite{SzulkinWeth}, and we consider the following monotonicity condition:
\begin{enumerate}[label=(F\arabic*),ref=F\arabic*]\setcounter{enumi}{4}
	\item \label{SW5} $u\mapsto f (x,u)/|u|^3$ is nondecreasing on $(-\infty, 0)$ and $(0,+\infty)$.
\end{enumerate}
With this weaker assumption we have

\begin{Th}\label{ThMain2}
If \eqref{SW1}--\eqref{SW3} and \eqref{SW5} hold, then
for every $\mu\in\mathbb{R}$, the system \eqref{P} admits a symmetric finite energy ground state solution $(u_{\mu},\phi_{\mu}, {\bf A}_{\mu})$.
\end{Th}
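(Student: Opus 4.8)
The plan is to combine the Benci--Fortunato reduction with the generalized Nehari manifold technique of Szulkin--Weth \cite{SzulkinWeth}, in a cylindrically symmetric functional setting that exploits the $\Z$-periodicity of $f$ in $x_3$ to restore compactness; this parallels the argument behind Theorem~\ref{ThMain1}, with the Nehari fibering replaced by the Szulkin--Weth machinery, which only requires the monotonicity \eqref{SW5}. \emph{Reduction.} I would fix the equivariant ansatz $\ba(x)=\frac{a(r,x_3)}{r^{2}}(-x_2,x_1,0)$, $r^{2}=x_1^{2}+x_2^{2}$, which satisfies $\div\ba=0$ identically and turns the curl--curl operator into a scalar uniformly elliptic operator, and restrict $u$ and $\phi$ to functions depending only on $(r,x_3)$. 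For each such $u\in H^{1}(\RT)$ the second and third equations in \eqref{P} are uniquely solvable for every $\mu\in\R$ (the associated bilinear forms are coercive on the relevant symmetric Sobolev spaces, using $-\Delta$, the first-order terms and a Hardy inequality when $\mu=0$, as in \cite{BF2009NA,BF2010CMP}), the resulting maps $u\mapsto\phi_u$, $u\mapsto\ba_u$ are of class $\cC^{1}$, and a maximum principle gives (after normalizing $\omega>0$) $0\le q\phi_u\le\omega$ and $0\le q\,\ba_u\cdot\n\theta/|\n\theta|^{2}\le\ell$ (for $\ell>0$), with uniform $L^{\infty}$ and $H^{1}$ bounds in $\|u\|$. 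Substituting $\phi_u,\ba_u$ back produces a $\cC^{1}$ reduced functional on the symmetric space, of the schematic form
\[
\J_\mu(u)=\frac12\int_{\RT}\!\Big(|\n u|^{2}+m^{2}u^{2}-\omega(\omega-q\phi_u)u^{2}+q(\ell\n\theta-q\ba_u)\cdot\ell\n\theta\,u^{2}\Big)\,dx-\int_{\RT}F(x,u)\,dx,
\]
whose nontrivial symmetric critical points, paired with $\phi_u$ and $\ba_u$, solve \eqref{P}; by the above sign bounds and $\omega^{2}<m^{2}$, the quadratic part of $\J_\mu$ is positive definite.

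\emph{Minimization over the Nehari manifold.} Since \eqref{SW5} gives only that $t\mapsto f(x,t)/t^{3}$ is nondecreasing, I would verify the abstract hypotheses of \cite{SzulkinWeth}. From \eqref{SW1}--\eqref{SW3} and \eqref{SW5} one gets that $t\mapsto F(x,t)/t^{4}$ is nondecreasing and positive, hence $F$ is superquadratic and $\J_\mu(tu)\to-\infty$ as $t\to+\infty$ for every $u\neq0$, while \eqref{SW2} and \eqref{SW1} give the quadratic lower bound near the origin. Then $t\mapsto\J_\mu(tu)$ has a unique positive critical point $t_u$, a strict global maximum; $u\mapsto t_u$ is continuous; $\cN_\mu:=\{u\neq0:\J_\mu'(u)u=0\}$ is a $\cC^{1}$ manifold with $\inf_{u\in\cN_\mu}\|u\|>0$; it is a natural constraint, so a minimizing sequence on $\cN_\mu$ is a Palais--Smale sequence for the free functional $\J_\mu$; and $c_\mu:=\inf_{\cN_\mu}\J_\mu>0$. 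It therefore suffices to find a minimizer of $\J_\mu$ on $\cN_\mu$.

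\emph{Compactness.} The functional $\J_\mu$ and the manifold $\cN_\mu$ are invariant under the integer translations $x_3\mapsto x_3+k$, $k\in\Z$, since $f$ is $\Z$-periodic in $x_3$ and $\theta$ (hence $\n\theta$) is $x_3$-independent; once the cylindrical symmetry has removed translations in $(x_1,x_2)$, this is the only residual non-compactness. I would take a minimizing sequence $(u_n)\subset\cN_\mu$, which is bounded; a Lions-type vanishing lemma rules out vanishing (otherwise $\int_{\RT}F(x,u_n)\to0$, which together with $u_n\in\cN_\mu$ forces $c_\mu=0$). After suitable integer translations $u_n(\cdot+k_n e_3)$, a subsequence converges weakly to some $u_\mu\neq0$; using the $\cC^{1}$ dependence $u\mapsto(\phi_u,\ba_u)$ together with local compactness to pass to the limit in the nonlocal terms, a Brezis--Lieb splitting of $\int_{\RT}F(x,u_n)$, and the monotonicity \eqref{SW5} to exclude loss of energy, one shows $u_\mu\in\cN_\mu$ with $\J_\mu(u_\mu)\le c_\mu$, hence $\J_\mu(u_\mu)=c_\mu$. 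As $\cN_\mu$ is a natural constraint, $u_\mu$ is a critical point of $\J_\mu$, and setting $\phi_\mu=\phi_{u_\mu}$, $\ba_\mu=\ba_{u_\mu}$ yields the desired symmetric finite-energy ground state.

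The main obstacle I expect is the compactness step: one must reconcile the $x_3$-translation invariance with the nonlocal unknowns $\phi_u$ and $\ba_u$---proving that these behave well along the translated sequence and in the weak limit---and check that the Brezis--Lieb decomposition of $\J_\mu$ is compatible with membership in $\cN_\mu$. This is precisely the point where the weaker hypothesis \eqref{SW5} (in place of \eqref{SW4}) forces the full Szulkin--Weth argument rather than a one-parameter Nehari fibering.
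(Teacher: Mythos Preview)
Your route is genuinely different from the paper's, and the paper explicitly explains why it avoids the route you propose. The paper does \emph{not} minimize the one-variable reduced functional $\J$ on its Nehari set $\cN$. For Theorem~\ref{ThMain2} it works with the \emph{two-variable} functional $J(u,\ba)$ (only $\phi$ is reduced), introduces the larger constraint $\cM=\{(u,\ba):\partial_uJ(u,\ba)[u]=0,\ \partial_{\ba}J(u,\ba)[\ba]=0\}$, and applies a tailored Mountain Pass lemma to obtain a Palais--Smale sequence at level $c\le\inf_{\cM}J$; boundedness follows from $4J-\partial_uJ[u]$ together with \eqref{eq:estimF1n}, non-vanishing from $c>0$, and the ground state level is reached via Fatou. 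The point of keeping $\ba$ as an independent variable is that for fixed $\ba$ the only nonlocal term in $t\mapsto J(tu,\ba)$ comes from $\phi_{tu}$, and its monotonicity is checked explicitly through $\psi_u$ (Lemma~\ref{le:phipsi}, used in the verification of (J3)).

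Your proposal has two concrete gaps. First, you assert that $\cN_\mu$ is a $\cC^1$ manifold; under \eqref{SW5} the nonlinearity $f$ is \emph{not} assumed $\cC^1$, so $u\mapsto\J'(u)[u]$ need not be differentiable and this claim fails --- this is precisely why the paper abandons the Lagrange-multiplier argument of Section~\ref{section:Th1}. Second, and more importantly, you claim that $t\mapsto\J_\mu(tu)$ has a unique positive critical point ``from \eqref{SW1}--\eqref{SW3} and \eqref{SW5}'', but \eqref{SW5} only controls the local term $f(x,tu)/t^3$; the fibering map also contains the nonlocal pieces $t^{-2}\!\int(\omega-q\phi_{tu})^2u^2$ and $t^{-2}\!\int|\ell\nabla\theta-q\ba_{tu}|^2u^2$, and uniqueness requires showing that each of these is monotone in $t$. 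The paper states at the opening of Section~\ref{se:MP} that, because of $\phi_u$ and $\ba_u$, ``it is not clear'' whether this uniqueness holds, which is why it switches to the Mountain Pass argument. If you want to push your approach through, you must supply this monotonicity --- for the $\phi$-part one can reuse the computation with $\psi_{tu}$ in the paper's verification of (J3), and for the $\ba$-part one can try the analogue via $\Psi_{tu}$ and \eqref{ciaciacia} --- but none of this is in your sketch. The pointwise bound $0\le q\,\ba_u\cdot\nabla\theta/|\nabla\theta|^2\le\ell$ you invoke is also not established anywhere (the paper only proves the integral inequalities \eqref{Auineq}), and the Brezis--Lieb splitting you plan for the compactness step is unnecessary: once the weak limit is a nontrivial critical point, a direct Fatou argument on $J-\tfrac14\partial_uJ[\cdot]$ gives the ground-state level, exactly as the paper does.
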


\begin{Rem}
Observe that both (\ref{SW4}) and (\ref{SW5}) imply $f(\cdot,s)=o(s)$ a.e. in $\mathbb{R}^3$ (we have respectively that $f(\cdot,s)=O(s^{\sigma-1})$ and $f(\cdot,s)=O(s^3)$). Thus, if we have some uniformity with respect to $x\in\mathbb{R}^3$, e.g. if $f$ does not depend on $x$, (\ref{SW2}) can be deleted.
Moreover \eqref{SW1}--\eqref{SW3} and \eqref{SW4}, respectively \eqref{SW5}, imply that $p\geq \sigma$, respectively $p\geq 4$ (see Lemma \ref{Le29}). Therefore, while in Theorem \ref{ThMain1} we can consider also $p\in (2,4)$, in Theorem \ref{ThMain2} we need to require that $p\geq 4$. On the other hand, the assumptions on the nonlinearity $f$ in Theorem \ref{ThMain2}  are weaker than in Theorem \ref{ThMain1}. 
\end{Rem}



In this kind of problem the classical approach consists in considering a functional which depends only on two variables, $(u,\ba)$,  since the second equations of \eqref{P} is uniquely solved, fixed $u$, (see for example \cite{BFbook} and the references therein).
In the assumptions of Theorem \ref{ThMain1}, the minimization on the Nehari manifold of the two variable functional seems to fail, because it is not clear if the Nehari manifold is a natural constraint. We overcome this difficulty by the following trick (that, up to our knowledge, is used here for the first time): we consider also the unique symmetric solution of the third equation of \eqref{P}, for $u$ fixed, and then we minimize on the {\em symmetric} Nehari manifold of a one variable functional.
The classical approach, instead, works in the hypotheses of Theorem \ref{ThMain2}: even if we cannot minimize directly on the Nehari {\em manifold} of the two variable functional due to the general assumptions on $f$, in particular the lack of an Ambrosetti-Rabinowitz condition, we are able to find a Mountain Pass solution and to show that it is actually a minimizer on the Nehari {\em manifold}.

We emphasize, moreover, that our ground states are also least energy solutions among all cylindrically symmetric ones.

%

We conclude the paper  analyzing the behavior of the symmetric ground state solutions found in the previous theorems as $\mu$ goes to zero. We show, in particular, that these solutions tend to a weak solution of the Klein-Gordon-Maxwell system \eqref{P} with $\mu=0$; however we do not know if this weak solution is again a ground state (see Section \ref{se:mu} for more details).

\begin{Th}\label{ThMain3}
Suppose that \eqref{SW1}-\eqref{SW3} hold and, additionally, assume \eqref{SW4} or \eqref{SW5}. 
If $(u_{\mu},\phi_{\mu}, {\bf A}_{\mu})$ is a symmetric finite energy ground state solution of \eqref{P} and $\mu\to 0$, then $(u_{\mu},\phi_{\mu}, {\bf A}_{\mu})$ tends weakly in $H^1(\R^3)\times \D^{1,2}(\R^3)\times (\D^{1,2}(\R^3))^3$ and a.e. in $\R^3$ to a symmetric weak solution $(u_{0},\phi_{0}, {\bf A}_{0})$ of the Klein-Gordon-Maxwell system \eqref{P} with $\mu=0$.
\end{Th}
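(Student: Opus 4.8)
The plan is to combine uniform a priori bounds on the family $\{(u_\mu,\phi_\mu,{\bf A}_\mu)\}$ with a \emph{soft} passage to the limit in the weak formulation of the three equations of \eqref{P}. The point is that the Proca terms $\mu^2\phi_\mu$ and $\mu^2{\bf A}_\mu$ become negligible as $\mu\to0$, while all the remaining, nonlinear, terms can be handled with nothing more than boundedness together with a.e.\ convergence; in particular no strong compactness of the embedding $H^1(\R^3)\hookrightarrow L^p(\R^3)$ will be needed to prove the statement as written.

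First I would show that the ground state levels $c_\mu$ stay bounded as $\mu\to0$. Fixing one nontrivial $w$ in the relevant symmetry class and either projecting it onto the symmetric Nehari manifold $\cN_\mu$ (in the setting of Theorem~\ref{ThMain1}) or using the characterization $c_\mu\le\max_{t>0}J_\mu(tw)$ in the spirit of \cite{SzulkinWeth} (in the setting of Theorem~\ref{ThMain2}), one is reduced to the remark that, for fixed $w$, the reduced potentials $\phi_\mu(tw)$ and ${\bf A}_\mu(tw)$ depend continuously, indeed monotonically, on $\mu$ and stay bounded for $|\mu|\le1$; hence $\sup_{0<|\mu|\le1}c_\mu<\infty$. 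The Nehari identity combined with \eqref{SW4} (resp.\ with the monotonicity \eqref{SW5}) then turns this into a uniform bound $\|u_\mu\|_{H^1(\R^3)}\le C$, and testing the second and third equations of \eqref{P} with $\phi_\mu$ and ${\bf A}_\mu$ yields $\|\phi_\mu\|_{\D^{1,2}(\R^3)}+\|{\bf A}_\mu\|_{(\D^{1,2}(\R^3))^3}\le C$, uniformly for $\mu$ near $0$. In the cylindrically symmetric setting the ansatz for ${\bf A}_\mu$ is automatically divergence-free, so the constraint $\operatorname{div}{\bf A}_\mu=0$ requires no special care here.

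Along any sequence $\mu_n\to0$, the uniform bounds and reflexivity give, up to a subsequence, $u_{\mu_n}\rightharpoonup u_0$ in $H^1(\R^3)$, $\phi_{\mu_n}\rightharpoonup\phi_0$ in $\D^{1,2}(\R^3)$ and ${\bf A}_{\mu_n}\rightharpoonup{\bf A}_0$ in $(\D^{1,2}(\R^3))^3$; Rellich's theorem on balls together with a diagonal argument over an exhaustion of $\R^3$ upgrades this to strong convergence in $L^2_{\mathrm{loc}}(\R^3)$ and, along a further subsequence, to a.e.\ convergence. The limit passage in the distributional formulation is then routine. Testing against $v\in\cC^\infty_c(\R^3\setminus\Sigma)$, which is dense in $H^1(\R^3)$: in the $\phi$- and ${\bf A}$-equations the terms $\mu_n^2\phi_{\mu_n}$ and $\mu_n^2{\bf A}_{\mu_n}$ tend to $0$, being bounded factors times $\mu_n^2\to0$, while the source terms $u_{\mu_n}^2$, $\phi_{\mu_n}u_{\mu_n}^2$, $(\ell\n\theta-q{\bf A}_{\mu_n})u_{\mu_n}^2$ are, by the Sobolev and Hardy inequalities underlying the functional setting, bounded in $L^q_{\mathrm{loc}}(\R^3)$ for some $q>1$ and converge a.e., hence weakly, to the expected limits; in the $u$-equation, on $\supp v$ the coefficient $|\ell\n\theta-q{\bf A}_{\mu_n}|^2$ is bounded away from $\Sigma$, so $|\ell\n\theta-q{\bf A}_{\mu_n}|^2u_{\mu_n}$ and $(\omega-q\phi_{\mu_n})^2u_{\mu_n}$ converge weakly to $|\ell\n\theta-q{\bf A}_0|^2u_0$ and $(\omega-q\phi_0)^2u_0$, while $f(x,u_{\mu_n})$ is bounded in $L^{p'}_{\mathrm{loc}}$ by \eqref{SW1} and converges a.e.\ to $f(x,u_0)$ by the continuity of $f(x,\cdot)$, hence converges weakly. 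Using the above density we conclude that $(u_0,\phi_0,{\bf A}_0)\in H^1(\R^3)\times\D^{1,2}(\R^3)\times(\D^{1,2}(\R^3))^3$ is a weak solution of \eqref{P} with $\mu=0$; it is cylindrically symmetric because the symmetry subspaces are linear, hence weakly closed, and $\operatorname{div}{\bf A}_0=0$ passes to the limit distributionally.

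The main technical point in this proof is the uniform bound on the levels $c_\mu$; the rest of the argument is soft. The genuine obstacle, however — and the reason the statement says no more — is that one cannot control the limit $(u_0,\phi_0,{\bf A}_0)$ beyond being a weak solution: although each $u_{\mu_n}$ lies on a Nehari manifold and therefore satisfies $\|u_{\mu_n}\|_{H^1(\R^3)}\ge\delta>0$, the non-compactness of $H^1(\R^3)\hookrightarrow L^p(\R^3)$ — mass may spread out, may escape radially, or may run to infinity along $x_3$, which carries only the integer-translation symmetry of $f$ — leaves open whether $u_0\not\equiv0$, and a fortiori whether $(u_0,\phi_0,{\bf A}_0)$ is itself a ground state of \eqref{P} with $\mu=0$.
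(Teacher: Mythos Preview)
Your boundedness argument and the soft passage to the limit are essentially what the paper does, and that part is fine. The real discrepancy is in your last paragraph: you treat the nontriviality of $u_0$ as genuinely open, whereas the paper \emph{does} secure it, and this is the actual content of the theorem (a limit that might be $(0,0,\mathbf{0})$ would make the statement vacuous).

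The missing step is a concentration--compactness argument. From the Nehari identity and \eqref{bdd1} one gets a uniform lower bound $\inf_{\mu\neq 0}\|u_\mu\|_p>0$ (exactly as in Lemma~\ref{le:disc}, the constants there do not depend on $\mu$). Then Lions' lemma produces points $z_\mu\in\R^3$ with $\int_{B_1(z_\mu)}|u_\mu|^2\geq\delta>0$. Because the $u_\mu$ are cylindrically symmetric and bounded in $L^2$, the first two coordinates of $z_\mu$ must stay bounded; hence the mass can only drift along the $x_3$-axis. Using the $\Z$-periodicity of $f$ in $x_3$, one replaces $u_\mu$ by $u_\mu(\cdot+y_\mu e_3)$ with $y_\mu\in\Z$ and obtains a weak limit $u_0\neq 0$. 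The paper does exactly this (see the proof of Theorem~\ref{ThMain3}), tacitly identifying the translated family with the original one; the limit triple is then a genuine nontrivial solution of \eqref{P} with $\mu=0$.

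So your diagnosis that ``mass may run to infinity along $x_3$'' is correct, but the remedy---integer translation in $x_3$, which the problem allows---is precisely what closes the gap. Without it your argument only yields convergence to a possibly trivial limit, which falls short of what the paper proves.
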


Observe finally that, with slight modifications, we could consider the case $\omega=0$. In this situation we are in the purely magnetostatic case, namely $\phi=0$.

In the following we denote by $C,C_i$ positive constants which can change from line to line. If not specified, all the integrals are evaluated on the whole $\R^3$.

\section{Variational setting}\label{se:vs}
In this section we give some properties on the variational structure of our problem and on symmetries we consider in order to get the desired solutions of \eqref{P}.

\subsection{Space and functional}

Let $H^1(\mathbb{R}^3)$ be the usual Sobolev space and $\D^{1,2}(\RT)$ be the completion of $C_0^\infty(\RT)$ with respect to the norm $\|\n \cdot\|_2$ and
\[
H_{\mu}=
\begin{cases}
H^{1}(\mathbb{R}^3) & \hbox{for }\mu\neq 0,\\
\D^{1,2}(\R^3) & \hbox{for }\mu = 0,
\end{cases}
\] 
equipped with the norm  $(\|\nabla \cdot \|_2^2 + \mu^2 \|\cdot\|_2^2)^{1/2}$.
Let, moreover, 
$\hat{H}^1$ be the closure of $C_0^\infty(\mathbb{R}^3\setminus\Sigma)$ with respect to the norm
\[
\|u\| = \Big(\|\nabla u\|_2^2+(m^2-\omega^2)\|u\|_2^2+ \ell^2 \int \frac{u^2}{r^2}\,dx\Big)^{1/2},
\qquad
r=\sqrt{x_1^2+x_2^2},
\]
where $x=(x_1,x_2,x_3)\in \RT$.
Of course $\hat{H}^1\subset H^1(\mathbb{R}^3)$ and it can be proved that $\hat{H}^1\cap C_0^\infty(\mathbb{R}^3)$ is dense in $\hat{H}^1$.

%
On $\hat{H}^1\times H_\mu \times (H_\mu)^3$ we define the functional
\begin{align*}
I(u,\phi,{\bf A})
&:=
\frac{1}{2}\|\nabla u\|_2^2 +\frac{m^2}{2}\|u\|_2^2 -\frac{1}{2} \|\nabla \phi\|_2^2 - \frac{\mu^2}{2} \|\phi\|_2^2 + \frac{1}{2}\|\nabla\times{\bf A}\|_2^2 + \frac{\mu^2}{2} \|{\bf A}\|_2^2 \\
&\qquad 
+ \frac{1}{2} \int |\ell\nabla\theta - q {\bf A}|^2 u^2 - \frac{1}{2} \int (\omega-q\phi)^2 u^2 - \int F(x,u).
\end{align*}
As in \cite[Section 3.2]{BF2010CMP} we can prove the following
\begin{Prop}\label{PropDitribSol}
The functional $I$ is of class $\cC^1$ in $\hat{H}^1\times H_\mu \times (H_\mu)^3$ and its critical points that satisfy the additional condition $\operatorname{div}{\bf A}=0$ are solutions of \eqref{P} in the sense of distributions, i.e.
\[
\begin{cases}
\displaystyle 
\int \nabla u \cdot \nabla v 
+ \int [m^2-(\omega-q\phi)^2] u v 
+ \int |\ell \nabla\theta-q{\bf A}|^2 u v
= \int f(x,u) v
\quad \hbox{for all } v \in C_0^\infty(\mathbb{R}^3),\\
\displaystyle 
\int \nabla \phi \cdot \nabla w 
+ \mu^2 \int \phi w 
=q\int (\omega-q\phi)wu^2 
\quad \hbox{for all } w \in C_0^\infty(\mathbb{R}^3),\\
\displaystyle 
\int \nabla {\bf A} \cdot \nabla {\bf B}  
+\mu^2\int {\bf A}\cdot{\bf B} 
=q\int (\ell \nabla\theta-q{\bf A})u^2 \cdot{\bf B} 
\quad \hbox{for all }{\bf B} \in (C_0^\infty(\mathbb{R}^3))^3.
\end{cases}
\]
\end{Prop}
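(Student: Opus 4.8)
The plan is to verify that $I$ is of class $\cC^1$ by inspecting each of its terms, then to compute its differential and identify the vanishing of the three partial derivatives with the distributional system; I would follow the scheme of \cite[Section~3.2]{BF2010CMP} almost verbatim, the only structural difference being the Proca mass contributions $\tfrac{\mu^2}{2}\|{\bf A}\|_2^2$ and $-\tfrac{\mu^2}{2}\|\phi\|_2^2$, which are continuous quadratic forms and hence harmless. The one point which is not pure bookkeeping is the passage, in the ${\bf A}$-equation, from the curl-curl expression produced by the differentiation to the componentwise Laplacian written in the statement, and this is exactly where the hypothesis $\operatorname{div}{\bf A}=0$ enters.

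\emph{Regularity.} The six pieces built from $\|\nabla u\|_2$, $\|u\|_2$, $\|\nabla\phi\|_2$, $\|\phi\|_2$, $\|\nabla\times{\bf A}\|_2$ and $\|{\bf A}\|_2$ are continuous quadratic forms on $\hat H^1$, $H_\mu$ and $(H_\mu)^3$, hence smooth. For the coupling terms I would expand, using $|\nabla\theta|=r^{-1}$,
\[
|\ell\nabla\theta - q{\bf A}|^2u^2 = \ell^2\frac{u^2}{r^2} - 2q\ell\,(\nabla\theta\cdot{\bf A})\,u^2 + q^2|{\bf A}|^2u^2,
\qquad
(\omega - q\phi)^2u^2 = \omega^2u^2 - 2\omega q\,\phi u^2 + q^2\phi^2u^2,
\]
and estimate each monomial by H\"older's inequality together with the continuous embeddings $\hat H^1\hookrightarrow H^1(\R^3)\hookrightarrow L^s(\R^3)$ for $s\in[2,6]$ and $H_\mu\hookrightarrow L^6(\R^3)$: the singular term is controlled by $\ell^2\int u^2/r^2\le\|u\|^2$, the cross term by $\||\nabla\theta|u\|_2\,\|{\bf A}\|_6\|u\|_3$, and the quartic ones by $\|{\bf A}\|_6^2\|u\|_3^2$ and $\|\phi\|_6^2\|u\|_3^2$; hence every coupling is a bounded multilinear form, in particular smooth. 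Finally $u\mapsto\int F(x,u)$ is of class $\cC^1$ on $\hat H^1$ by the standard Nemytskii argument: by \eqref{SW1} one has $|f(x,s)|\le a(1+|s|^{p-1})$ with $2<p<6$, so $u\mapsto f(\cdot,u)$ is continuous from $L^p(\R^3)$ into $L^{p/(p-1)}(\R^3)$, and from this the differentiability of $u\mapsto\int F(x,u)$ and the continuity of its differential follow in the usual way. Essentially all of the computation lives here, but it is routine.

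\emph{Euler--Lagrange equations.} Differentiating $I$ and pairing with $v\in\hat H^1$, $w\in H_\mu$, ${\bf B}\in(H_\mu)^3$ gives
\begin{align*}
\partial_u I(u,\phi,{\bf A})[v] &= \int\nabla u\cdot\nabla v + \int\bigl[m^2 - (\omega - q\phi)^2\bigr]uv + \int|\ell\nabla\theta - q{\bf A}|^2uv - \int f(x,u)v,\\
\partial_\phi I(u,\phi,{\bf A})[w] &= -\int\nabla\phi\cdot\nabla w - \mu^2\int\phi w + q\int(\omega - q\phi)\,w\,u^2,\\
\partial_{\bf A} I(u,\phi,{\bf A})[{\bf B}] &= \int\nabla\times{\bf A}\cdot\nabla\times{\bf B} + \mu^2\int{\bf A}\cdot{\bf B} - q\int(\ell\nabla\theta - q{\bf A})\cdot{\bf B}\,u^2.
\end{align*}
If $(u,\phi,{\bf A})$ is a critical point these expressions vanish for all admissible test functions; restricting the last two to $w\in C_0^\infty(\R^3)\subset H_\mu$ and ${\bf B}\in(C_0^\infty(\R^3))^3\subset(H_\mu)^3$, and the first to smooth compactly supported functions (which, via the density of $\hat H^1\cap C_0^\infty(\R^3)$ in $\hat H^1$ recalled above together with the fact that $\Sigma$ has zero $H^1$-capacity, is enough, exactly as in \cite[Section~3.2]{BF2010CMP}), yields the first two equations of the distributional system and the curl-curl form of the third.

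\emph{From the curl-curl operator to the Laplacian.} For ${\bf B}\in(C_0^\infty(\R^3))^3$ the pointwise identity $\nabla\times\nabla\times{\bf B}=\nabla(\operatorname{div}{\bf B})-\Delta{\bf B}$ together with integration by parts against ${\bf A}\in(H_\mu)^3$ gives
\[
\int\nabla\times{\bf A}\cdot\nabla\times{\bf B} = \int\nabla{\bf A}\cdot\nabla{\bf B} + \int{\bf A}\cdot\nabla(\operatorname{div}{\bf B}),
\]
and the hypothesis $\operatorname{div}{\bf A}=0$, tested against $\operatorname{div}{\bf B}\in C_0^\infty(\R^3)$, makes the last integral vanish; substituting into $\partial_{\bf A} I[{\bf B}]=0$ produces precisely the third equation of the distributional system. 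I expect this to be the only genuinely non-routine step: it is the divergence-free constraint that strips the gradient part off the curl-curl operator, so that \eqref{P} may be read as a system governed by the vector Laplacian rather than by the non-coercive curl-curl operator --- which, as stressed in the Introduction, is exactly why that constraint is built into the setting --- while the remaining delicate point, handled as in \cite[Section~3.2]{BF2010CMP}, is the careful treatment of the singular line $\Sigma$ when passing to distributional test functions in the first equation.
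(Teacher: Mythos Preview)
Your proposal is correct and follows exactly the approach the paper indicates: the paper's own proof consists solely of the sentence ``As in \cite[Section 3.2]{BF2010CMP} we can prove the following'', and you have carried out precisely that scheme, correctly isolating the two non-routine points (the capacity argument for the singular axis $\Sigma$ in the $u$-equation, and the use of $\operatorname{div}{\bf A}=0$ to convert the curl-curl bilinear form into $\int\nabla{\bf A}\cdot\nabla{\bf B}$). Your treatment of the Proca mass terms as harmless continuous quadratic forms is the right observation for the extension from \cite{BF2010CMP} to the present setting.
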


\subsection{Reduction argument on the second equation of \eqref{P}}\label{reductionargument}

As it is classical in this kind of problem, we note that, if we fix $u\in H^1(\mathbb{R}^3)$, the second equation of \eqref{P} admits a unique solution $\phi_u\in H_\mu$.
Such a solution can be obtained as the unique minimizer of the functional 
\[
K(\phi):=\frac{1}{2}\|\nabla\phi\|_2^2+ \frac{\mu^2}{2}\|\phi\|_2^2+\frac{1}{2} \int (\omega-q\phi)^2 u^2
\] 
defined on $H_\mu$.

In this section we want to introduce some useful properties of $\phi_u$.

\begin{Lem}\label{Lempropphi}
	The function $\Phi:=u\mapsto\phi_u$ is of class $\cC^1$ and for all $v\in H^{1}(\mathbb{R}^3)$,
	\begin{equation}
	\label{Phi'}
	\Phi'(u)[v]=-2q(\Delta-\mu^2-q^2u^2)^{-1} [(\omega-q\phi_u)uv].
	\end{equation}
\end{Lem}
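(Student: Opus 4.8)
The plan is to prove Lemma~\ref{Lempropphi} by applying the Implicit Function Theorem to the map that characterizes $\phi_u$ as the unique solution of the second equation, viewed as a zero of a $\cC^1$ map between Hilbert spaces. First I would define $G\colon H^1(\R^3)\times H_\mu\to H_\mu$ (identifying $H_\mu$ with its dual via the Riesz isomorphism, or equivalently working with the weak formulation) by
\begin{equation*}
\la G(u,\phi),w\ra = \int\nabla\phi\cdot\nabla w + \mu^2\int\phi w - q\int(\omega-q\phi)wu^2
\qquad\text{for all }w\in H_\mu,
\end{equation*}
so that $G(u,\phi)=0$ is exactly the weak form of $-\Delta\phi+\mu^2\phi=q(\omega-q\phi)u^2$, and by the discussion preceding the lemma $G(u,\phi_u)=0$ with $\phi_u$ the unique such solution. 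I would then check that $G$ is of class $\cC^1$: the terms linear in $\phi$ are bounded linear (hence smooth), and the nonlinear term $(u,\phi)\mapsto \int(\omega-q\phi)w u^2$ is a polynomial-type expression continuous and differentiable by Sobolev embedding $H^1(\R^3)\hookrightarrow L^6$ together with H\"older (so that $u^2\phi\in L^{6/5}\subset (H_\mu)'$ for $\mu\ne0$; for $\mu=0$ one works with $u\in H^1$, $\phi\in\D^{1,2}$ and the same embeddings).

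Next I would compute the partial derivative $\partial_\phi G(u,\phi_u)$, which acts on $\zeta\in H_\mu$ by
\begin{equation*}
\la \partial_\phi G(u,\phi_u)\zeta,w\ra = \int\nabla\zeta\cdot\nabla w + \mu^2\int\zeta w + q^2\int u^2\zeta w,
\end{equation*}
a symmetric, bounded, coercive bilinear form on $H_\mu$ (coercivity because the extra term is nonnegative), hence by Lax--Milgram $\partial_\phi G(u,\phi_u)$ is an isomorphism of $H_\mu$; in operator language it is $-\Delta+\mu^2+q^2u^2$. The Implicit Function Theorem then gives that $u\mapsto\phi_u=\Phi(u)$ is $\cC^1$ and
\begin{equation*}
\Phi'(u) = -\big(\partial_\phi G(u,\phi_u)\big)^{-1}\circ\partial_u G(u,\phi_u).
\end{equation*}
Finally I would identify $\partial_u G(u,\phi_u)[v]$, which pairs against $w$ as $-2q\int(\omega-q\phi_u)uv\,w$, i.e. it corresponds to the element $-2q(\omega-q\phi_u)uv$; composing with $(-\Delta+\mu^2+q^2u^2)^{-1}=-(\Delta-\mu^2-q^2u^2)^{-1}$ yields exactly formula \eqref{Phi'}.

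The main obstacle I anticipate is purely technical rather than conceptual: verifying carefully that the Nemytskii-type terms involving $u^2\phi$ and $u^2 v$ land in the right (dual) space and depend continuously differentiably on $(u,\phi)$, uniformly enough to invoke the IFT. This requires the correct bookkeeping of Sobolev exponents, and in particular a separate (but parallel) treatment of the two cases $\mu\neq0$ (so $H_\mu=H^1(\R^3)$) and $\mu=0$ (so $H_\mu=\D^{1,2}(\R^3)$), where integrability has to be tracked through $L^6$ rather than all $L^q$ with $q\in[2,6]$. One should also note that the derivative formula \eqref{Phi'} is stated for test directions $v\in H^1(\R^3)$ while $u$ a priori lives in $\hat H^1\subset H^1(\R^3)$; since $\hat H^1$ embeds continuously in $H^1(\R^3)$, the composition $\Phi\colon \hat H^1\to H_\mu$ inherits the $\cC^1$ regularity and the same formula. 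Everything else---uniqueness of $\phi_u$, its variational characterization as the minimizer of $K$---is already provided in the text and can be quoted directly.
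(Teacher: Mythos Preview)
Your proposal is correct and follows essentially the same route as the paper: both apply the Implicit Function Theorem to the map encoding the second equation of \eqref{P}, compute the partial derivatives, and invert $\partial_\phi$ to obtain \eqref{Phi'}. The only cosmetic difference is packaging---the paper precomposes with $(\Delta-\mu^2)^{-1}$ to define $T(u,\phi)=q(\Delta-\mu^2)^{-1}[(\omega-q\phi)u^2]+\phi$ rather than using the Riesz identification as you do, but the two formulations are equivalent and lead to the same computation.
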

\begin{proof}
	We proceed as in \cite[Proposition 2.1]{DM}. So we define the map $T: H^1(\mathbb{R}^3)\times H_\mu \to H_\mu$ such that 
	\[
	T(u,\phi)=q(\Delta-\mu^2)^{-1}[(\omega-q\phi)u^2] + \phi.
	\]
	Simple calculations show that
	\[
	\partial_u T(u,\phi)=v\in H^1(\mathbb{R}^3) \longmapsto 2q (\Delta - \mu^2)^{-1}[(\omega-q\phi)uv]
	\]
	and
	\[
	\partial_\phi T(u,\phi)=\psi\in H_\mu \longmapsto -q^2(\Delta - \mu^2)^{-1}[u^2\psi] + \psi.
	\]
	We have that $\partial_\phi T(u,\phi)$ is invertible and that
	\[
	(\partial_\phi T(u,\phi))^{-1}=(\Delta - \mu^2 - q^2u^2)^{-1}\circ(\Delta-\mu^2).
	\]
	Thus, by Implicit Function Theorem, we conclude.
\end{proof}
%
Moreover we have the following further useful properties. 
\begin{Lem}
	\label{le:phipsi}
Let $u\in H^1(\mathbb{R}^3)$. Then $\phi_u$ satisfies
\begin{equation}\label{bohh}
\|\nabla\phi_u\|_2^2 + \mu^2 \|\phi_u\|_2^2
=q\int(\omega-q\phi_u)\phi_u u^2,
\end{equation}
\begin{equation}
\label{bohhh}
0\leq \phi_u \leq \omega/q,
\end{equation}
\begin{equation}
\label{124}
\|\nabla\phi_u\|_2^2 + \mu^2 \|\phi_u\|_2^2
\leq \omega^2 \|u\|_2^2,
\end{equation}
and there exists $C>0$ such that
\begin{equation}
\label{eq:125}
\|\nabla\phi_u\|_2^2 + \mu^2 \|\phi_u\|_2^2\leq C \|u\|_{12/5}^4.
\end{equation}
Moreover, $\psi_u := \Phi'(u)[u]/2$ satisfies
\begin{align}
-\Delta\psi_u + \mu^2 \psi_u &=q(\omega-q\phi_u-q\psi_u)u^2, \label{cacca}
\\
\omega \int  \psi_u u^2 
&= \int (\omega-q\phi_u)\phi_u u^2, \label{cacca2}
\end{align}
\begin{equation}
\label{cc3}
0\leq \psi_u \leq \phi_u.
\end{equation}
\end{Lem}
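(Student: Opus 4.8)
The plan is to exploit systematically that $\phi_u$ is the unique minimizer of $K$ on $H_\mu$, hence the unique weak solution of $-\Delta\phi+\mu^2\phi=q(\omega-q\phi)u^2$, together with the formula for $\Phi'(u)$ from Lemma~\ref{Lempropphi}. First I would establish \eqref{bohh}: testing the equation for $\phi_u$ against $\phi_u$ itself immediately gives $\|\nabla\phi_u\|_2^2+\mu^2\|\phi_u\|_2^2=q\int(\omega-q\phi_u)\phi_u u^2$. Next, for \eqref{bohhh}, I would use a truncation argument typical for this kind of system: testing the equation against $\phi_u^-:=\min\{\phi_u,0\}$ yields $\|\nabla\phi_u^-\|_2^2+\mu^2\|\phi_u^-\|_2^2=q\int(\omega-q\phi_u)u^2\phi_u^-\le q\omega\int u^2\phi_u^-\le 0$ on the set where $\phi_u<0$ (using $\phi_u^-\le 0$), forcing $\phi_u\ge0$; similarly, testing against $(\phi_u-\omega/q)^+$ and using $\omega-q\phi_u\le0$ there gives $\phi_u\le\omega/q$. (When $\mu=0$ one must check these truncations stay in $\D^{1,2}$, which holds since $\phi_u\in H_\mu$ and the truncation is Lipschitz with bounded support after subtracting the constant appropriately, or by approximation.) Then \eqref{124} is immediate from \eqref{bohh} and \eqref{bohhh}: $\|\nabla\phi_u\|_2^2+\mu^2\|\phi_u\|_2^2=q\int(\omega-q\phi_u)\phi_u u^2\le q\int\omega\cdot(\omega/q)u^2=\omega^2\|u\|_2^2$.

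For \eqref{eq:125} I would again start from \eqref{bohh}, bound $0\le\omega-q\phi_u\le\omega$ and $0\le\phi_u\le\omega/q$ to get $\|\nabla\phi_u\|_2^2+\mu^2\|\phi_u\|_2^2\le q\int(\omega-q\phi_u)\phi_u u^2$; but a crude bound only recovers \eqref{124}. To get the $\|u\|_{12/5}^4$ bound one instead keeps one factor of $\phi_u$ and estimates $q\int(\omega-q\phi_u)\phi_u u^2\le q\omega\int\phi_u u^2\le q\omega\|\phi_u\|_6\|u^2\|_{6/5}=q\omega\|\phi_u\|_6\|u\|_{12/5}^2$, then uses the Sobolev inequality $\|\phi_u\|_6\le C\|\nabla\phi_u\|_2\le C(\|\nabla\phi_u\|_2^2+\mu^2\|\phi_u\|_2^2)^{1/2}$; dividing through by $(\|\nabla\phi_u\|_2^2+\mu^2\|\phi_u\|_2^2)^{1/2}$ and squaring gives \eqref{eq:125}.

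For the statements on $\psi_u$: by Lemma~\ref{Lempropphi}, $\psi_u=\tfrac12\Phi'(u)[u]=-q(\Delta-\mu^2-q^2u^2)^{-1}[(\omega-q\phi_u)u^2]$, which means exactly $(-\Delta+\mu^2+q^2u^2)\psi_u=q(\omega-q\phi_u)u^2$, i.e.\ $-\Delta\psi_u+\mu^2\psi_u=q(\omega-q\phi_u-q\psi_u)u^2$, giving \eqref{cacca}. Identity \eqref{cacca2} follows by testing \eqref{cacca} against $\psi_u$ and, separately, differentiating \eqref{bohh}; more directly, pair \eqref{cacca} with $\phi_u$ and the $\phi_u$-equation with $\psi_u$: both give $\int\nabla\phi_u\cdot\nabla\psi_u+\mu^2\int\phi_u\psi_u$, hence $q\int(\omega-q\phi_u-q\psi_u)u^2\phi_u=q\int(\omega-q\phi_u)u^2\psi_u$, and rearranging together with $\int(\omega-q\phi_u)u^2\psi_u$ terms yields $\omega\int\psi_u u^2=\int(\omega-q\phi_u)\phi_u u^2$ after using $q\int\phi_u\psi_u u^2$ cancels. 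Finally \eqref{cc3}: $\psi_u\ge0$ comes from testing \eqref{cacca} against $\psi_u^-$ and using $\omega-q\phi_u\ge0$ (from \eqref{bohhh}) and $q^2u^2\psi_u\psi_u^-\ge$ handled on the negativity set; for $\psi_u\le\phi_u$, note $w:=\phi_u-\psi_u$ solves $-\Delta w+\mu^2 w=q(\omega-q\phi_u)u^2-q(\omega-q\phi_u-q\psi_u)u^2=q^2\psi_u u^2\ge0$ since $\psi_u\ge0$, so by the maximum principle (test against $w^-$) $w\ge0$.

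I expect the main obstacle to be the $\mu=0$ case throughout the maximum-principle/truncation arguments, where one works in $\D^{1,2}(\R^3)$ rather than $H^1$: the truncated functions $\phi_u^-$, $(\phi_u-\omega/q)^+$, $\psi_u^-$ must be justified as admissible test functions in $\D^{1,2}$, which requires a density/approximation argument (e.g.\ cutting off at infinity) rather than being automatic; the sign computations themselves are routine once admissibility is granted. A secondary care point is keeping the correct factor of $2$ in the relation $\psi_u=\Phi'(u)[u]/2$ when translating \eqref{Phi'} into the PDE \eqref{cacca}.
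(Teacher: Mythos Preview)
Your proposal is correct and follows essentially the same approach as the paper: testing the $\phi_u$-equation against $\phi_u$ for \eqref{bohh}, truncation/maximum-principle arguments for \eqref{bohhh} and \eqref{cc3}, H\"older plus Sobolev for \eqref{eq:125}, reading off \eqref{cacca} from the resolvent formula, and the symmetric pairing of the $\phi_u$- and $\psi_u$-equations to get \eqref{cacca2} via cancellation of the $q\int\phi_u\psi_u u^2$ term. The only cosmetic differences are sign conventions (the paper tests against $(\omega-q\phi_u)_-$ and $(\psi_u-\phi_u)_+$ where you use $(\phi_u-\omega/q)^+$ and $w^-$ with $w=\phi_u-\psi_u$), and the paper does not explicitly address the $\mu=0$ admissibility issue you flag.
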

\begin{proof}
Fix $u\in H^1(\mathbb{R}^3)$. 
Equation \eqref{bohh} is simply obtained    
multiplying the second equation of \eqref{P} by $\phi_u$ and integrating.
\\
Let us prove that $\phi_u \geq 0$. Assume by contradiction that it is not true. Then let $\Gamma=\left\{x\in\mathbb{R}^3\mid \phi_u \leq 0 \right\}$ and $(\phi_u)_-=\min\{\phi_u,0\}$. Multiplying by $(\phi_u)_-$ the second equation of \eqref{P} and integrating on $\Gamma$ we get
\[
\int_{\Gamma} |\nabla\phi_u|^2 + \mu^2 \int_{\Gamma} \phi_u^2
=q\omega\int_{\Gamma}\phi_u u^2 - q^2 \int_{\Gamma}\phi_u^2 u^2
\]
and we reach a contradiction since the right hand side is negative and the left hand side is positive. To prove that $\phi_u \leq \omega/q$, assume by contradiction that it is not true and let $\Gamma=\left\{x\in\mathbb{R}^3\mid \omega -q\phi_u \leq 0 \right\}$ and $(\omega - q\phi_u)_-=\min\{\omega - q\phi_u,0\}$. Multiplying by $(\omega - q\phi_u)_-$ the second equation of \eqref{P} and integrating on $\Gamma$ we get
\[
-q\int_{\Gamma} |\nabla\phi_u|^2 + \mu^2 \int_{\Gamma} \phi_u (\omega - q\phi_u)
=q\int_{\Gamma} (\omega-q\phi_u)^2 u^2
\]
and we reach again a contradiction since the left hand side is negative and the right hand side is positive.
Inequality \eqref{124} easily follows from \eqref{bohh} and \eqref{bohhh} and, again by \eqref{bohh}, 
	\begin{align*}
	\|\nabla\phi_u\|_2^2 + \mu^2 \|\phi_u\|_2^2
	\leq q\omega \int \phi_u u^2
	\leq q\omega \|\phi_u\|_6 \|u\|_{12/5}^2
	\leq C (\|\nabla\phi_u\|_2^2 + \mu^2 \|\phi_u\|_2^2)^{1/2} \|u\|_{12/5}^2
	\end{align*}
	so that \eqref{eq:125} holds.\\
By \eqref{Phi'} we have that
\[
\psi_u=-q (\Delta - \mu^2 - q^2u^2)^{-1} [(\omega-q\phi_u)u^2]
\]
and so \eqref{cacca} easily follows.\\
Moreover, since $\phi_u$ solves the second equation in \eqref{P} and by \eqref{cacca}, we have
\[
q\int (\omega - q \phi_u)\psi_u u^2
=\int (-\Delta \phi_u + \mu^2 \phi_u)\psi_u 
=\int(-\Delta\psi_u + \mu^2 \psi_u)\phi_u 
= q\int (\omega-q\phi_u-q\psi_u)\phi_u u^2
\]
and so \eqref{cacca2}.
\\
To prove \eqref{cc3}, we proceed as before. First, assume by contradiction that $\psi_u$ is not positive and let $\Gamma=\left\{x\in\mathbb{R}^3\mid \psi_u \leq 0 \right\}$ and $(\psi_u)_-=\min\{0,\psi_u\}$.
Multiplying \eqref{cacca} by $(\psi_u)_-$ we have that
\[
\int_\Gamma |\nabla \psi_u|^2 + \mu^2 \int_\Gamma |\psi_u|^2
= q\int_\Gamma (\omega-q\phi_u)\psi_u u^2-q\int_\Gamma |\psi_u|^2 u^2\leq 0
\]
and we reach a contradiction. Then, if by contradiction $\Gamma=\left\{x\in\mathbb{R}^3\mid \phi_u \leq \psi_u \right\}\neq\emptyset$, we consider $(\psi_u-\phi_u)_+=\max\{0,\psi_u-\phi_u\}$.
Since, by \eqref{cacca} and the second equation in \eqref{P},
\[
-\Delta(\psi_u - \phi_u) + \mu^2 (\psi_u-\phi_u) = -q^2\psi_u u^2,
\]
multiplying by $(\psi_u-\phi_u)_+$ we have that
\[
\int_\Gamma |\nabla (\psi_u - \phi_u)|^2 + \mu^2 \int_\Gamma |(\psi_u - \phi_u)|^2
= -q^2\int_\Gamma (\psi_u - \phi_u)\psi_u u^2\leq 0
\]
and we reach a contradiction.
\end{proof}

So we can introduce the {\em reduced} functional
\begin{equation}
\label{defJ}
\begin{split}
J(u,{\bf A})
&=I(u,\phi_u,{\bf A})\\
&=
\frac{1}{2} \|\nabla u\|_2^2
+\frac{m^2-\omega^2}{2}\|u\|_2^2
+\frac{q\omega}{2}\int \phi_u u^2
+ \frac{1}{2}\|\nabla\times{\bf A}\|_2^2
+\frac{\mu^2}{2}\|{\bf A}\|_2^2\\
&\quad
+ \frac{1}{2} \int |\ell\nabla\theta- q{\bf A}|^2 u^2
- \int F(x,u)
\end{split}
\end{equation}
on $\hat{H}^1 \times (H_\mu)^3$. 
We have that $J$ is of class $\cC^1$ and if $(u,{\bf A})$ is a critical point of $J$, then $(u,\phi_u,{\bf A})$ is a critical point of $I$.


\subsection{Symmetries and natural constraint}

If $\mu\neq 0$ we could prove that, for fixed $u\in \hat{H}^1$, there exists a unique ${\bf A}_u$ which solves the third equation of \eqref{P}, minimizing the functional 
\begin{equation}\label{KAPPA}
{\bf K}({\bf A})
:=
\frac{1}{2}\|\nabla\times{\bf A}\|_2^2
+ \frac{\mu^2}{2}\|{\bf A}\|_2^2
+ \frac{q^2}{2}\int |{\bf A}|^2 u^2
- q\ell \int \nabla\theta\cdot{\bf A} u^2 
\end{equation}
defined in the Hilbert space $H(\operatorname{curl},\mu,u)$,
the completion of $(C_0^{\infty}(\R^3))^3$ with respect to the following norm
\[
\|{\bf A}\|_{\operatorname{curl},\mu,u}^2:= \|\nabla\times{\bf A}\|_2^2 +\mu^2\|{\bf A}\|_2^2+\|u{\bf A}\|_2^2,
\]
being ${\bf K}$ strictly convex and coercive on $H(\operatorname{curl},\mu,u)$.
However, in such a way, we do not get any information on $\operatorname{div}{\bf A}_u$ but, as observed in the Introduction, we are looking for solutions $(u,\phi, \ba)$, with $\operatorname{div}{\bf A}=0$.
Moreover, if $\mu= 0$, then $\|{\bf A}\|_{\operatorname{curl},0,u}$ does not define a norm, in general.

Hence, to avoid these difficulties, we consider the following symmetric setting similarly as in \cite{BF2009NA,BF2010CMP}, where the case $\mu=0$ has been considered for different types of nonlinearities.


Let us consider functions that are cylindrically symmetric, namely which depend only on the cylindrical coordinates $(r,x_3)$ and let us denote with $(C_0^\infty(\RT))_\sharp$, respectively with $(C_0^\infty(\RT\setminus \Sigma))_\sharp$, the subspace of cylindrically symmetric test functions in $\RT$, respectively in $\RT\setminus\Sigma$  ($\Sigma$ is defined in \eqref{Sigma}). Moreover let $H^1_\sharp(\mathbb{R}^3)$ be the closure of $(C_0^\infty(\RT))_\sharp$ with respect to the $H^1(\mathbb{R}^3)$-norm and,  analogously, we define $(H_\mu)_\sharp$ and $\hat{H}^1_\sharp$.\\
Observe that if $u\in H^1_\sharp(\mathbb{R}^3)$, then $\phi_u\in (H_\mu)_\sharp$.
Moreover, since we are looking for solutions with $\operatorname{div}{\bf A}=0$, we consider the set
\[
\mathcal{A}^\infty_0
=
\left\{
{\bf B}\in C_0^\infty(\mathbb{R}^3\setminus\Sigma,\mathbb{R}^3) \mid {\bf B}=b(r,x_3)\nabla\theta, b\in  C_0^\infty(\mathbb{R}^3\setminus\Sigma,\mathbb{R})
\right\},
\]
where $\theta$ is defined in \eqref{theta} and
\[
\nabla\theta(x)=\left(\frac{x_2}{r^2},-\frac{x_1}{r^2},0\right),
\]
and then the completion $\mathcal{A}$ of $\mathcal{A}^\infty_0$ with respect to the $(H_\mu)^3$-norm.\\
As in \cite[Lemma 15]{BF2010CMP} we can prove the following
\begin{Lem}\label{LemA}
For every ${\bf A}\in \mathcal{A}$ we have that $\operatorname{div}{\bf A}=0$, $\|\nabla\times{\bf A}\|_2=\| \nabla {\bf A}\|_2$, and so $\nabla\times\nabla\times {\bf A}= - \Delta {\bf A}$.
\end{Lem}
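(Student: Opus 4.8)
The plan is to work first on the dense subspace $\mathcal{A}^\infty_0$ and then pass to the completion $\mathcal{A}$. So let ${\bf B}=b(r,x_3)\nabla\theta\in\mathcal{A}^\infty_0$ with $b\in C_0^\infty(\R^3\setminus\Sigma,\R)$. The first step is a direct computation: using the explicit formula $\nabla\theta(x)=(x_2/r^2,-x_1/r^2,0)$, I would compute $\operatorname{div}{\bf B}$ and $\nabla\times{\bf B}$ componentwise. Since $\nabla\theta$ is itself divergence-free away from $\Sigma$ (as $\theta$ is, locally, a harmonic function — it is the imaginary part of the holomorphic $\log(x_1+ix_2)$), we get $\operatorname{div}{\bf B}=\nabla b\cdot\nabla\theta=\partial_r b\,(\partial_r\theta)+\partial_{x_3}b\,(\partial_{x_3}\theta)$; but $\theta$ depends only on the angular variable, so $\partial_r\theta=\partial_{x_3}\theta=0$ and hence $\operatorname{div}{\bf B}=0$. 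For the curl: again using that $\nabla\times\nabla\theta={\bf 0}$ on $\R^3\setminus\Sigma$ (curl of a gradient), one has $\nabla\times{\bf B}=\nabla b\times\nabla\theta$, and since $\nabla b$ has no angular component while $\nabla\theta$ is purely angular, the vectors $\nabla b$ and $\nabla\theta$ are orthogonal pointwise; moreover $|\nabla\theta|=1/r$ and an elementary check shows $|\nabla b\times\nabla\theta|=|\nabla b|\,|\nabla\theta|$ because of this orthogonality together with the fact that $\nabla b\times\nabla\theta$ and $\nabla b\otimes\nabla\theta$ capture the same information. A cleaner route is to note directly that pointwise $|\nabla\times{\bf B}|^2=|\nabla{\bf B}|^2$ by expanding both in cylindrical coordinates: the structure ${\bf B}=b\,\nabla\theta$ forces all the ``extra'' terms distinguishing $|\nabla\times{\bf B}|^2$ from $|\nabla{\bf B}|^2$ to cancel.

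The second step is the passage to the limit. Given ${\bf A}\in\mathcal{A}$, by definition there is a sequence $({\bf B}_n)\subset\mathcal{A}^\infty_0$ with ${\bf B}_n\to{\bf A}$ in the $(H_\mu)^3$-norm. The identity $\|\nabla\times{\bf B}_n\|_2=\|\nabla{\bf B}_n\|_2$ from Step 1, together with the continuity of both $\nabla\times$ and $\nabla$ as operators into $L^2$, gives $\|\nabla\times{\bf A}\|_2=\|\nabla{\bf A}\|_2$ by taking $n\to\infty$ (the sequence $({\bf B}_n)$ is Cauchy in $\dot H^1$ precisely because it is Cauchy in the $L^2$-norm of the gradient). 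Similarly $\operatorname{div}{\bf B}_n=0$ for all $n$ passes to $\operatorname{div}{\bf A}=0$ in the distributional sense, since $\operatorname{div}$ is continuous from $(H_\mu)^3$ to $H^{-1}$ (or one tests against $C_0^\infty$ and passes to the limit under the integral).

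The third and final step is the identity $\nabla\times\nabla\times{\bf A}=-\Delta{\bf A}$. This is the classical vector identity $\nabla\times\nabla\times{\bf A}=\nabla(\operatorname{div}{\bf A})-\Delta{\bf A}$, valid in the sense of distributions for any ${\bf A}\in (L^2_{\rm loc})^3$; since $\operatorname{div}{\bf A}=0$ from Step 2, the gradient term vanishes and the claim follows. The main obstacle I expect is not conceptual but the careful bookkeeping in Step 1: one must be attentive to the fact that everything is done on $\R^3\setminus\Sigma$ (where $\nabla\theta$ is smooth), and that the key cancellations rely precisely on $b$ being cylindrically symmetric so that $\nabla b$ lives in the $(\partial_r,\partial_{x_3})$-plane, orthogonal to the angular direction of $\nabla\theta$ — this orthogonality is what upgrades the trivial inequality $|\nabla\times{\bf B}|\le C|\nabla{\bf B}|$ to the exact equality of $L^2$-norms. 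Since the excerpt tells us this mirrors \cite[Lemma 15]{BF2010CMP}, I would follow that computation, adapting it to keep the Proca term $\mu^2\|{\bf A}\|_2^2$ inert (it plays no role in the curl-vs-gradient identity and only enters through the choice of completion norm).
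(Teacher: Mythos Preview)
The paper does not actually give a proof of this lemma; it simply states ``As in \cite[Lemma 15]{BF2010CMP} we can prove the following'' and refers the reader to that reference. Your plan --- verify the claims on the dense set $\mathcal{A}_0^\infty$ by direct computation exploiting the structure ${\bf B}=b(r,x_3)\nabla\theta$, then pass to the completion, then invoke $\nabla\times\nabla\times = \nabla\operatorname{div} - \Delta$ --- is the natural one and presumably matches the cited argument.

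One correction, though: your claimed \emph{pointwise} equality $|\nabla\times{\bf B}|^2=|\nabla{\bf B}|^2$ is false for this class. With $B_\vartheta=b/r$ one computes $|\nabla\times{\bf B}|^2 = |\nabla b|^2/r^2$, whereas $|\nabla{\bf B}|^2 = |\nabla b|^2/r^2 - \partial_r(b^2)/r^3 + 2b^2/r^4$; the extra terms do not cancel pointwise but only after integration (integrating $-\partial_r(b^2)/r^2 + 2b^2/r^3$ in $r$ and using that $b$ vanishes near $r=0$ and at infinity). The clean way to get the $L^2$ identity is simply the standard integration-by-parts formula
\[
\int_{\R^3}|\nabla{\bf B}|^2 = \int_{\R^3}|\nabla\times{\bf B}|^2 + \int_{\R^3}(\operatorname{div}{\bf B})^2,
\]
valid for ${\bf B}\in (C_0^\infty(\R^3))^3$ (and your ${\bf B}$, having compact support in $\R^3\setminus\Sigma$, extends by zero to such a field); combined with your Step~1 computation $\operatorname{div}{\bf B}=0$, this gives $\|\nabla\times{\bf B}\|_2=\|\nabla{\bf B}\|_2$ directly. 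The rest of your plan (passage to the limit, the distributional vector identity) is fine.
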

Thus we consider
\[
V:=\hat{H}^1_\sharp \times \mathcal{A}
\]
equipped with the product norm.
Note that $\mathcal{A}\subset (H_{\mu})^3$. 

In the next lemma we prove that  $V$ is a natural constraint and so we can reduce to look for critical points of $J$ on $V$. Note that Lemma \ref{LemA} implies that the critical points on $V$ satisfy $\operatorname{div}{\bf A}=0$, which is the additional condition that arises when we pass from system \eqref{eq:KGM2} to \eqref{P} in the Klein-Gordon-Maxwell-Proca case ($\mu\neq 0$). Therefore if $(u,{\bf A})\in V$ is a critical point of $J$, then 
$(u,\phi_u,{\bf A})$ is a solution of \eqref{P} for an arbitrary (fixed) value of $\mu\in\R$.
\begin{Lem}\label{eq:lemNaturalConstr}
Let $(u,{\bf A})\in V$. If $\partial_u J(u,{\bf A})[v]=0$ for any $v\in \hat{H}^1_\sharp$, then $\partial_u J(u,{\bf A})=0$ and if $\partial_{\bf A} J(u,{\bf A})[{\bf B}]=0$ for any ${\bf B}\in\mathcal{A}$, then $\partial_{\bf A} J(u,{\bf A})=0$.
\end{Lem}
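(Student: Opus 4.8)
The statement is a classical ``natural constraint'' result, of the Palais symmetric-criticality flavour, adapted to the product structure $V = \hat H^1_\sharp \times \mathcal A$. The plan is to exploit the fact that the cylindrical symmetry involved is implemented by a compact group action under which $J$ is invariant; the point is that the orthogonal complement of the fixed-point space carries the ``non-symmetric'' directions of the derivative, and testing against symmetric functions already sees all of the gradient.

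First I would make precise the group action. Consider the group $G = SO(2) \times \R$ (or $SO(2) \times (\R/\Z)$, to respect the $x_3$-periodicity) acting on $\R^3$ by rotations about the $x_3$-axis composed with translations in the $x_3$ direction; let it act on scalar functions by $u \mapsto u\circ g^{-1}$ and on the vector potentials in $\mathcal A$ via the induced action on $b(r,x_3)\nabla\theta$ (note $\nabla\theta$ is itself rotation-covariant, so this action maps $\mathcal A^\infty_0$ to itself and extends to $\mathcal A$). I would check that $J$ is $G$-invariant: every term in \eqref{defJ} — the Dirichlet, mass, $\phi_u$-coupling, curl, $\mu^2\|\ba\|_2^2$, magnetic coupling $\int|\ell\nabla\theta - q\ba|^2 u^2$, and $\int F(x,u)$ terms — is invariant because $f$ (hence $F$) depends only on $(r,x_3)$ and is $\Z$-periodic in $x_3$ by \eqref{SW1}, because $\phi_u$ is obtained by a $G$-equivariant construction (the minimizer of $K$ is unique, so $\phi_{u\circ g^{-1}} = \phi_u \circ g^{-1}$), and because $|\nabla\theta|^2 = 1/r^2$ and $\nabla\theta\cdot\nabla\theta$ are rotation-invariant. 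The fixed-point sets of this action in $\hat H^1$ and in the curl-space are exactly $\hat H^1_\sharp$ and $\mathcal A$ respectively (the latter being already ``built'' from cylindrically symmetric profiles $b$). Then the principle of symmetric criticality (Palais) gives: a point of $V$ that is critical for $J|_V$ is critical for $J$ on the whole space $\hat H^1 \times (H_\mu)^3$. However, the lemma is stated componentwise — separately in $u$ and in $\ba$ — so rather than invoking the abstract theorem as a black box I would prove the two assertions directly, which is cleaner and avoids checking the precise hypotheses of the symmetric-criticality theorem in a non-Hilbert setting.

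For the $u$-component: suppose $\partial_u J(u,\ba)[v] = 0$ for all $v \in \hat H^1_\sharp$, and let $w \in \hat H^1$ be arbitrary. The linear functional $v \mapsto \partial_u J(u,\ba)[v]$ on $\hat H^1$ is represented by an element $\xi \in (\hat H^1)^*$; by the $G$-invariance of $J$ one checks (differentiating the identity $J(u\circ g^{-1}, \ba\circ g^{-1}) = J(u,\ba)$ and using $G$-invariance of $(u,\ba)$) that $\xi$ is $G$-invariant as a functional, i.e. $\xi(g\cdot w) = \xi(w)$. Averaging $w$ over $G$ with the normalized Haar measure produces the symmetrization $\bar w \in \hat H^1_\sharp$ (here one needs that the $G$-average of an $\hat H^1$ function lies in $\hat H^1$ and depends continuously on $w$ — an easy consequence of the isometric action on the Hilbert space $\hat H^1$, using that $m^2 - \omega^2 > 0$ so $\|\cdot\|$ is indeed an equivalent Hilbert norm, and of Fubini/Jensen to control $\int u^2/r^2$), and $\xi(w) = \xi(\bar w) = \partial_u J(u,\ba)[\bar w] = 0$. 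Hence $\partial_u J(u,\ba) = 0$. The $\ba$-component is entirely analogous using the $G$-action on the curl-space and Lemma \ref{LemA} (which guarantees $\|\nabla\times\ba\|_2 = \|\nabla\ba\|_2$, so the relevant norm is genuinely Hilbert and the averaging argument applies); one must also note $\mathcal A \subset (H_\mu)^3$, already remarked in the text, so $\partial_{\ba} J(u,\ba)$ is a bounded functional on $(H_\mu)^3$.

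The main obstacle I anticipate is not conceptual but the verification that the $G$-averaging operator is well-defined and bounded on the slightly exotic spaces $\hat H^1$ (with its weighted $\int u^2/r^2$ term) and $\mathcal A$ (defined as a completion of a space of $b(r,x_3)\nabla\theta$'s). For $\hat H^1$ this requires noting that the weight $1/r^2$ is itself $G$-invariant, so the action is isometric for $\|\cdot\|$ and averaging is a norm-one projection onto the closed subspace $\hat H^1_\sharp$; a subtlety is that one should argue on the dense subspace $\hat H^1\cap C_0^\infty(\R^3)$ (dense by the remark after the definition of $\hat H^1$) where the pointwise $G$-average is unambiguous, then pass to the limit. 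For $\mathcal A$, one should observe that $\mathcal A$ is already the fixed-point space: the defining generators $b(r,x_3)\nabla\theta$ are $G$-invariant as vector fields, so $\mathcal A \subseteq ((H_\mu)^3)^G$, and the averaging of a general ${\bf B}\in(H_\mu)^3$ lands in $((H_\mu)^3)^G$; one then needs that this latter fixed-point space coincides with (the $(H_\mu)^3$-closure of) $\mathcal A$, i.e. that every cylindrically symmetric divergence-free field of the relevant type is a limit of $b(r,x_3)\nabla\theta$'s — this is exactly the content of (the proof of) \cite[Lemma 15]{BF2010CMP} cited above for Lemma \ref{LemA}, and I would simply invoke it. Modulo these routine functional-analytic checks, the two displayed conclusions follow.
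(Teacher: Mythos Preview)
Your approach via symmetric criticality is sound in spirit and, for the $u$-component, essentially works once one error is fixed: the group should be $G=SO(2)$ (or $O(2)$) acting by rotations about the $x_3$-axis, \emph{without} the $x_3$-translations. Including translations makes the fixed-point space consist of functions depending on $r$ alone, which is strictly smaller than $\hat H^1_\sharp$ (functions of $(r,x_3)$); your averaging operator would then not project onto $\hat H^1_\sharp$ but onto a smaller subspace.

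The more serious gap is in the $\ba$-component. You claim that the $G$-fixed-point space in $(H_\mu)^3$ coincides with $\mathcal A$, invoking Lemma~\ref{LemA} (i.e.\ \cite[Lemma~15]{BF2010CMP}). This is false: the $SO(2)$-equivariant vector fields are all fields of the form $a_r(r,x_3)e_r+a_\theta(r,x_3)e_\theta+a_3(r,x_3)e_3$, whereas $\mathcal A$ contains only the azimuthal piece $a_\theta(r,x_3)e_\theta=b(r,x_3)\nabla\theta$. Lemma~\ref{LemA} says only that elements of $\mathcal A$ are divergence-free with $\|\nabla\times\ba\|_2=\|\nabla\ba\|_2$; it does not characterize the fixed-point space, and for instance $a_3(r)e_3$ is $G$-fixed and divergence-free but not in $\mathcal A$. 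So averaging a general ${\bf B}\in(H_\mu)^3$ over $G$ does \emph{not} land in $\mathcal A$, and your hypothesis cannot be applied to the averaged test function as written.

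What rescues the argument---and is essentially the paper's proof---is the observation that the gradient itself,
\[
\xi=-\Delta\ba+\mu^2\ba-q(\ell\nabla\theta-q\ba)u^2,
\]
is purely azimuthal (of the form $c(r,x_3)\nabla\theta$), because $\ba\in\mathcal A$ and $\nabla\theta$ are azimuthal and the Laplacian preserves this structure. Hence $\xi$ automatically annihilates the $e_r$- and $e_3$-components of any test function. The paper exploits this directly: decompose ${\bf B}={\bf B}_1+{\bf B}_2$ with ${\bf B}_1\in\mathcal A$, ${\bf B}_2\in\mathcal A^\perp$, and use that $\xi$ (by density, as in \cite[Lemma~12]{BF2009NA}) acts like an element of $\mathcal A'$, so $\langle\xi,{\bf B}_2\rangle=0$. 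For the $u$-component the paper argues analogously with $\eta=-\Delta u+[m^2-(\omega-q\phi_u)^2]u+|\ell\nabla\theta-q\ba|^2u-f(x,u)$, which is cylindrically symmetric because all its ingredients are. This direct orthogonal-decomposition argument is shorter than the averaging route and sidesteps the need to identify fixed-point spaces altogether.
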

\begin{proof}
We argue similarly as in \cite[Theorem 16]{BF2009NA}.
Suppose that $\partial_u J(u,{\bf A})[v]=0$ for any $v\in \hat{H}^1_\sharp$
 and let
$$\eta=-\Delta u + [m^2-(\omega-q\phi_u)^2] u + |\ell \nabla \theta-q{\bf A}|^2u-f(x,u).$$
Take any $v \in \hat{H}^1$ and let $v=v_1+v_2$,  where $v_1\in \hat{H}^1_{\sharp}$ and $v_2\in (\hat{H}^1_{\sharp})^{\perp}$.
Then
$$\partial_u J(u,{\bf A})[v]=\langle\eta,v\rangle=\partial_u J(u,{\bf A})[v_1]+\langle\eta,v_2\rangle=\langle\eta,v_2\rangle.$$
Since $u$, $\phi_u$, $|\ell \nabla \theta-q{\bf A}|^2$, and $f$ are cylindrically symmetric,
then by the density argument $\eta\in (\hat{H}^1_{\sharp})'$ and 
$$\partial_u J(u,{\bf A})[v]=0.$$ 
Similarly we suppose that $\partial_{\bf A} J(u,{\bf A})[{\bf B}]=0$ for any ${\bf B}\in \mathcal{A}$ and let
$$\xi=-\Delta {\bf A} + \mu^2 {\bf A} - q(\ell \nabla \theta-q{\bf A})u^2.$$
Take any ${\bf B} \in (H_{\mu})^3$ and let ${\bf B}={\bf B}_1+{\bf B}_2$,  where ${\bf B}_1\in \mathcal{A}$ and ${\bf B}_2\in \mathcal{A}^{\perp}$.
Then
$$\partial_{\bf A} J(u,{\bf A})[{\bf B}]=\langle\xi,{\bf B}\rangle=\partial_{\bf A} J(u,{\bf A})[{\bf B}_1]+\langle\xi,{\bf B}_2\rangle=\langle\xi,{\bf B}_2\rangle.$$
Similarly as in \cite[Lemma 12]{BF2009NA},  by the density argument $\xi\in \mathcal{A}'$ and 
$$\partial_{\bf A} J(u,{\bf A})[{\bf B}]=0.$$
\end{proof}



In this symmetric setting, we are able to prove the following result that holds true both for $\mu\neq 0$ and for $\mu=0$. 
\begin{Lem}\label{le:A}
For every $u\in \hat{H}^1_\sharp$ there exists a unique ${\bf A}_u\in\mathcal{A}$ that solves the third equation of \eqref{P}. 
Moreover we have
\begin{equation}
\label{Aueq}
\|\nabla\times{\bf A}_u\|_2^2 + \mu^2 \|{\bf A}_u\|_2^2
= q\int (\ell\nabla\theta- q{\bf A}_u)\cdot {\bf A}_u u^2,
\end{equation}
\begin{equation}
\|\nabla\times{\bf A}_u\|_2^2 + \mu^2 \|{\bf A}_u\|_2^2 +\int |\ell\nabla\theta- q{\bf A}_u|^2 u^2
=\ell^2 \int \frac{u^2}{r^2} - \ell q \int \nabla\theta\cdot {\bf A}_u u^2, \label{comevuoi}
\end{equation}
and
\begin{equation}
\label{Auineq}
0\leq q^2 \int |{\bf A}_u|^2 u^2 \leq \ell q \int \nabla\theta\cdot {\bf A}_u u^2\leq \ell^2 \int \frac{u^2}{r^2}.
\end{equation}
\end{Lem}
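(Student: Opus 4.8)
The plan is to obtain $\mathbf{A}_u$ as the unique minimizer on $\mathcal{A}$ of the restriction of the functional $\mathbf{K}$ from \eqref{KAPPA}, exploiting the symmetric setting. First I would observe that by Lemma \ref{LemA}, for $\mathbf{B}\in\mathcal{A}$ one has $\|\nabla\times\mathbf{B}\|_2=\|\nabla\mathbf{B}\|_2$, so on $\mathcal{A}$ the functional becomes
\[
\mathbf{K}(\mathbf{B})=\frac12\|\nabla\mathbf{B}\|_2^2+\frac{\mu^2}{2}\|\mathbf{B}\|_2^2+\frac{q^2}{2}\int|\mathbf{B}|^2u^2-q\ell\int\nabla\theta\cdot\mathbf{B}\,u^2 .
\]
For $\mathbf{B}=b(r,x_3)\nabla\theta$ one computes $|\mathbf{B}|^2=b^2/r^2$ and $\nabla\theta\cdot\mathbf{B}=b/r^2$, and more importantly $\int\nabla\theta\cdot\mathbf{B}\,u^2\le\big(\int u^2/r^2\big)^{1/2}\big(\int|\mathbf{B}|^2u^2\big)^{1/2}$ by Cauchy--Schwarz; together with the assumption $u\in\hat H^1_\sharp$ (which makes $\int u^2/r^2$ finite) this shows the linear term is controlled by $\|\mathbf{B}\|_{\mathcal{A}}$, so $\mathbf{K}$ is coercive and strictly convex, hence weakly lower semicontinuous, on the Hilbert space $\mathcal{A}\subset(H_\mu)^3$. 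Therefore it has a unique minimizer $\mathbf{A}_u$, whose Euler--Lagrange equation is exactly $\partial_{\mathbf{A}}J(u,\mathbf{A}_u)[\mathbf{B}]=0$ for all $\mathbf{B}\in\mathcal{A}$; invoking Lemma \ref{eq:lemNaturalConstr} (natural constraint for the $\mathbf{A}$-variable) upgrades this to $\partial_{\mathbf{A}}J(u,\mathbf{A}_u)=0$, i.e. $\mathbf{A}_u$ solves the third equation of \eqref{P} with $\operatorname{div}\mathbf{A}_u=0$. Uniqueness in $\mathcal{A}$ follows from strict convexity. The case $\mu=0$ is handled the same way: although $\|\cdot\|_{\operatorname{curl},0,u}$ need not be a norm in general, on $\mathcal{A}$ we have $\|\nabla\times\mathbf{B}\|_2=\|\nabla\mathbf{B}\|_2$ so $\mathcal{A}$ is a genuine Hilbert space and the argument goes through verbatim.

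Next I would derive the three displayed identities. Identity \eqref{Aueq} is the Pohozaev-free identity obtained simply by testing the third equation of \eqref{P} against $\mathbf{A}_u$ itself (legitimate since $\mathbf{A}_u\in\mathcal{A}$ is an admissible test function, by density from $\mathcal{A}^\infty_0$):
\[
\|\nabla\times\mathbf{A}_u\|_2^2+\mu^2\|\mathbf{A}_u\|_2^2=q\int(\ell\nabla\theta-q\mathbf{A}_u)\cdot\mathbf{A}_u\,u^2 .
\]
For \eqref{comevuoi} I would expand $\int|\ell\nabla\theta-q\mathbf{A}_u|^2u^2=\ell^2\int|\nabla\theta|^2u^2-2\ell q\int\nabla\theta\cdot\mathbf{A}_u u^2+q^2\int|\mathbf{A}_u|^2u^2$ and use $|\nabla\theta|^2=1/r^2$ together with \eqref{Aueq} rewritten as $\|\nabla\times\mathbf{A}_u\|_2^2+\mu^2\|\mathbf{A}_u\|_2^2=q\ell\int\nabla\theta\cdot\mathbf{A}_u u^2-q^2\int|\mathbf{A}_u|^2u^2$; adding the two and cancelling the $q^2\int|\mathbf{A}_u|^2u^2$ terms gives
\[
\|\nabla\times\mathbf{A}_u\|_2^2+\mu^2\|\mathbf{A}_u\|_2^2+\int|\ell\nabla\theta-q\mathbf{A}_u|^2u^2=\ell^2\int\frac{u^2}{r^2}-\ell q\int\nabla\theta\cdot\mathbf{A}_u\,u^2 .
\]
Finally, for the chain of inequalities \eqref{Auineq}: the first inequality is trivial; the third, $\ell q\int\nabla\theta\cdot\mathbf{A}_u u^2\le\ell^2\int u^2/r^2$, follows by Cauchy--Schwarz, $\ell q\int\nabla\theta\cdot\mathbf{A}_u u^2\le\ell q\big(\int u^2/r^2\big)^{1/2}\big(\int|\mathbf{A}_u|^2u^2\big)^{1/2}$, combined with the middle inequality; so the heart of the matter is the middle inequality $q^2\int|\mathbf{A}_u|^2u^2\le\ell q\int\nabla\theta\cdot\mathbf{A}_u u^2$, which I would get by noting that \eqref{Aueq} forces $q\ell\int\nabla\theta\cdot\mathbf{A}_u u^2=\|\nabla\times\mathbf{A}_u\|_2^2+\mu^2\|\mathbf{A}_u\|_2^2+q^2\int|\mathbf{A}_u|^2u^2\ge q^2\int|\mathbf{A}_u|^2u^2$. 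Once the middle inequality holds, feed it into Cauchy--Schwarz to close the third one, and in particular $\ell q\int\nabla\theta\cdot\mathbf{A}_u u^2\ge 0$.

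The main obstacle I anticipate is purely technical rather than conceptual: ensuring all the quantities appearing are finite and that $\mathbf{A}_u$ is a legitimate test function in the distributional identity from Proposition \ref{PropDitribSol}, which a priori only allows $\mathbf{B}\in(C_0^\infty(\mathbb{R}^3))^3$. This requires a density/approximation argument — approximating $\mathbf{A}_u$ by elements of $\mathcal{A}^\infty_0$ in the $\mathcal{A}$-norm and passing to the limit — using that $u\in\hat H^1_\sharp$ controls $\int u^2/r^2<\infty$ and hence, via Cauchy--Schwarz and the bound $\int|\mathbf{A}_u|^2u^2<\infty$ (which is part of the finiteness of $\mathbf{K}(\mathbf{A}_u)$), all the mixed integrals $\int\nabla\theta\cdot\mathbf{B}\,u^2$. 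I would also need the Sobolev embeddings to control $\int|\mathbf{B}|^2u^2$ by $\|\mathbf{B}\|_{\mathcal{A}}^2$ (e.g. via $\|\mathbf{B}\|_6\|u\|_3^2$-type estimates when $\mu\ne 0$, or directly using $\int u^2/r^2$ for the symmetric class), but this mirrors the estimates already carried out for $\phi_u$ in Lemma \ref{le:phipsi} and for $\mathbf{K}$ above. Everything else is bookkeeping with the quadratic identities.
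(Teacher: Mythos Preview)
Your proposal is correct and follows essentially the same approach as the paper: minimize $\mathbf{K}$ on $\mathcal{A}$ using strict convexity and coercivity, invoke Lemma~\ref{eq:lemNaturalConstr}, then test the equation against $\mathbf{A}_u$ to get \eqref{Aueq} and expand to get \eqref{comevuoi}. The only minor difference is that the paper obtains the last inequality in \eqref{Auineq} directly from the nonnegativity of the left-hand side of \eqref{comevuoi}, which is quicker than your Cauchy--Schwarz route (and avoids any fuss about the sign of $\ell q$).
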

\begin{proof} Fix $u\in \hat{H}^1_\sharp$. 
Let us consider the functional ${\bf K}$, defined in \eqref{KAPPA}, on $\mathcal{A}$. It is strictly convex and coercive, hence there is a unique critical point, ${\bf A}_u$, of ${\bf K}$. In view of Lemma \ref{eq:lemNaturalConstr} we get that ${\bf A}_u$ solves the third equation of \eqref{P}.
Equation \eqref{Aueq} and the first two inequalities in \eqref{Auineq} are trivial. To conclude, it is enough to observe that
\begin{align*}
0
&\leq
\|\nabla\times{\bf A}_u\|_2^2 + \mu^2 \|{\bf A}_u\|_2^2 +\int |\ell\nabla\theta- q{\bf A}_u|^2 u^2\\
&=
q\int (\ell\nabla\theta- q{\bf A}_u)\cdot {\bf A}_u u^2 +\int |\ell\nabla\theta- q{\bf A}_u|^2 u^2\\
&=
\ell^2 \int \frac{u^2}{r^2} - \ell q \int \nabla\theta\cdot {\bf A}_u u^2.
\end{align*}
\end{proof}

The following regularity result is useful in order to deal with a further reduced functional.
\begin{Lem}\label{le:A'}
The map $\mathscr{A}:=u\in \hat{H}^1_\sharp \mapsto {\bf A}_u\in\mathcal{A}$ is of class $\cC^1$ and for all $v\in \hat{H}^1_\sharp$,
	\[
	\mathscr{A}'(u)[v]=-2q(\Delta-\mu^2-q^2u^2)^{-1} [(\ell \n \theta-q\ba_u)uv].
	\]
Moreover, $\Psi_u := \mathscr{A}'(u)[u]/2$ satisfies
\begin{equation}
-\Delta\Psi_u + \mu^2 \Psi_u =q(\ell \n \theta-q\ba_u-q\Psi_u)u^2, \label{caccaa}
\end{equation}
and so
\begin{equation}\label{ciaciacia}
\|\nabla \Psi_u\|_2^2 + \mu^2 \|\Psi_u\|_2^2 + q^2 \int u^2 |\Psi_u|^2 = q \int (\ell\nabla\theta-q{\bf A}_u)\cdot\Psi_u u^2 \geq 0.
\end{equation}
Finally
\begin{equation}
\ell  \int  \n \theta\cdot \Psi_u u^2 
= \int (\ell \n \theta-q\ba_u)\cdot\ba_u u^2. \label{cacca2a}
\end{equation}
\end{Lem}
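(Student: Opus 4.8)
The plan is to mimic exactly the proof of Lemma~\ref{Lempropphi} (the analogous statement for $\phi_u$), transferring it from the scalar equation to the vector equation via the symmetric reduction already set up. First I would establish the $\cC^1$-regularity of $\mathscr{A}$ by the Implicit Function Theorem: define, in analogy with the map $T$ in the proof of Lemma~\ref{Lempropphi}, the map $\mathbf{T}:\hat{H}^1_\sharp\times\mathcal{A}\to\mathcal{A}$ by $\mathbf{T}(u,{\bf A})=q(\Delta-\mu^2)^{-1}[(\ell\nabla\theta-q{\bf A})u^2]+{\bf A}$, where here $(\Delta-\mu^2)^{-1}$ acts componentwise and Lemma~\ref{LemA} is what guarantees that on $\mathcal{A}$ the curl-curl operator is $-\Delta$ so that this inverse is the right one; zeros of $\mathbf{T}(u,\cdot)$ are precisely the solutions ${\bf A}_u$ produced by Lemma~\ref{le:A}. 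Then $\partial_{\bf A}\mathbf{T}(u,{\bf A})[\boldsymbol{\psi}]=-q^2(\Delta-\mu^2)^{-1}[u^2\boldsymbol{\psi}]+\boldsymbol{\psi}$ is invertible with inverse $(\Delta-\mu^2-q^2u^2)^{-1}\circ(\Delta-\mu^2)$, and $\partial_u\mathbf{T}(u,{\bf A})[v]=2q(\Delta-\mu^2)^{-1}[(\ell\nabla\theta-q{\bf A})uv]$, so the IFT gives $\mathscr{A}\in\cC^1$ together with the stated formula for $\mathscr{A}'(u)[v]$ after composing $-(\partial_{\bf A}\mathbf{T})^{-1}\circ\partial_u\mathbf{T}$. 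One has to check that $\mathbf{T}$ actually lands in $\mathcal{A}$ and the derivatives too, i.e. that the constructed vector fields still have the special form $b(r,x_3)\nabla\theta$; this follows because $(\ell\nabla\theta-q{\bf A})u^2$ has that form whenever ${\bf A}$ does and $u$ is cylindrically symmetric, and $(\Delta-\mu^2)^{-1}$ preserves the symmetry, just as in the $\phi_u$ case.

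Next, I would read off \eqref{caccaa}. By definition $\Psi_u=\mathscr{A}'(u)[u]/2$, so from the formula for $\mathscr{A}'(u)$ we get $\Psi_u=-q(\Delta-\mu^2-q^2u^2)^{-1}[(\ell\nabla\theta-q{\bf A}_u)u^2]$, i.e. $(-\Delta+\mu^2+q^2u^2)\Psi_u=q(\ell\nabla\theta-q{\bf A}_u)u^2$, which rearranges to $-\Delta\Psi_u+\mu^2\Psi_u=q(\ell\nabla\theta-q{\bf A}_u-q\Psi_u)u^2$; this is exactly the analogue of how \eqref{cacca} was obtained from \eqref{Phi'}. Then \eqref{ciaciacia} is obtained by testing \eqref{caccaa} with $\Psi_u$ itself and integrating by parts (using Lemma~\ref{LemA} so that $\|\nabla\times\Psi_u\|_2=\|\nabla\Psi_u\|_2$, or more simply observing $\Psi_u\in\mathcal{A}$ and the form of the equation), giving $\|\nabla\Psi_u\|_2^2+\mu^2\|\Psi_u\|_2^2+q^2\int u^2|\Psi_u|^2=q\int(\ell\nabla\theta-q{\bf A}_u)\cdot\Psi_u u^2$; the left-hand side is manifestly nonnegative, which yields the inequality.

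Finally, \eqref{cacca2a} is the analogue of \eqref{cacca2} and I would prove it by the same symmetric-pairing trick: since ${\bf A}_u$ solves the third equation of \eqref{P}, namely $-\Delta{\bf A}_u+\mu^2{\bf A}_u=q(\ell\nabla\theta-q{\bf A}_u)u^2$ (using Lemma~\ref{LemA} to write $\nabla\times\nabla\times{\bf A}_u=-\Delta{\bf A}_u$), and since $\Psi_u$ solves \eqref{caccaa}, I compute $\int(-\Delta{\bf A}_u+\mu^2{\bf A}_u)\cdot\Psi_u=\int(-\Delta\Psi_u+\mu^2\Psi_u)\cdot{\bf A}_u$, i.e.
\[
q\int(\ell\nabla\theta-q{\bf A}_u)\cdot\Psi_u u^2=q\int(\ell\nabla\theta-q{\bf A}_u-q\Psi_u)\cdot{\bf A}_u u^2;
\]
expanding and cancelling the common term $q\int(\ell\nabla\theta-q{\bf A}_u)\cdot{\bf A}_u u^2$ on both sides — wait, more carefully: subtracting $q\int(\ell\nabla\theta-q{\bf A}_u)\cdot{\bf A}_u u^2$ is not what happens; instead the identity reads $q\int(\ell\nabla\theta)\cdot\Psi_u u^2-q^2\int{\bf A}_u\cdot\Psi_u u^2 = q\int(\ell\nabla\theta-q{\bf A}_u)\cdot{\bf A}_u u^2-q^2\int\Psi_u\cdot{\bf A}_u u^2$, and the terms $-q^2\int{\bf A}_u\cdot\Psi_u u^2$ on both sides cancel, leaving $\ell\int\nabla\theta\cdot\Psi_u u^2=\int(\ell\nabla\theta-q{\bf A}_u)\cdot{\bf A}_u u^2$, which is \eqref{cacca2a}. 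The only genuinely delicate point in the whole argument is the bookkeeping needed so that all these integrations by parts are legitimate in the function spaces at hand — one must know $u\Psi_u,\ u{\bf A}_u\in L^2$ and $\Psi_u,{\bf A}_u\in\mathcal{A}\subset(H_\mu)^3$ with $\nabla\times\nabla\times=-\Delta$ there — but all of this is already provided by Lemma~\ref{LemA}, Lemma~\ref{le:A}, and the Sobolev estimates used for $\phi_u$, so the proof is a faithful vector-valued transcription of the earlier ones. I expect no serious obstacle beyond this routine verification that the $\phi_u$-arguments port over verbatim to the $\mathcal{A}$-valued setting.
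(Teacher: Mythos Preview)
Your proposal is correct and follows essentially the same approach as the paper: the paper defines the same map $\mathcal{T}(u,{\bf A})=q(\Delta-\mu^2)^{-1}[(\ell\nabla\theta-q{\bf A})u^2]+{\bf A}$ on $\hat{H}^1_\sharp\times\mathcal{A}$, applies the Implicit Function Theorem exactly as in Lemma~\ref{Lempropphi}, reads off \eqref{caccaa} from the formula for $\Psi_u$, and obtains \eqref{cacca2a} via the same symmetric-pairing computation you describe. Your additional remark that one must verify $\mathbf{T}$ actually maps into $\mathcal{A}$ is a point the paper passes over silently, and your explicit derivation of \eqref{ciaciacia} (which the paper leaves as ``and so'') is more detailed, but otherwise the arguments coincide.
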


\begin{proof}
	In the first part we proceed as in Lemma \ref{Lempropphi}. So we define the map $\mathcal{T}: \hat{H}^1_\sharp\times \mathcal{A}\to \mathcal{A}$ such that 
	\[
	\mathcal{T}(u,\ba)=q(\Delta-\mu^2)^{-1}[(\ell \n \theta-q\ba)u^2] + \ba.
	\]
	Simple calculations show that
	\[
	\partial_u \mathcal{T}(u,\ba)=v\in \hat{H}^1_\sharp \longmapsto 2q (\Delta - \mu^2)^{-1}[(\ell \n \theta-q\ba)uv]
	\]
	and
	\[
	\partial_\ba \mathcal{T}(u,\ba)={\bf V}\in\mathcal{A}\longmapsto -q^2(\Delta - \mu^2)^{-1}[{\bf V}u^2] + {\bf V}.
	\]
	We have that $\partial_\ba \mathcal{T}(u,\ba)$ is invertible and that
	\[
	(\partial_\ba \mathcal{T}(u,\ba))^{-1}=(\Delta - \mu^2 - q^2u^2)^{-1}\circ(\Delta-\mu^2).
	\]
	Thus, by Implicit Function Theorem, $\mathscr{A}$ is of class $\cC^1$ and, for all $v\in \hat{H}^1_\sharp$,
	\[
	\mathscr{A}'(u)[v]=-2q (\Delta - \mu^2 - q^2u^2)^{-1} [(\ell \n \theta-q\ba_u)uv];
	\]
	so
\[
\Psi_u=-q (\Delta - \mu^2 - q^2u^2)^{-1} [(\ell \n \theta-q\ba_u)u^2],
\]
namely $\Psi_u$ satisfies \eqref{caccaa}. 
Moreover, since $\ba_u$ solves the third equation in \eqref{P}, by \eqref{caccaa} we have
\[
q\int (\ell \n \theta-q\ba_u)\Psi_u u^2
=\int (-\Delta \ba_u + \mu^2 \ba_u)\Psi_u 
=\int(-\Delta\Psi_u + \mu^2 \Psi_u)\ba_u 
= q\int (\ell \n \theta-q\ba_u-q\Psi_u)\ba_u u^2
\]
and so \eqref{cacca2a}.
\end{proof}

Hence we can also consider a second reduced functional
\begin{equation}\label{J}
\begin{split}
\mathcal{J}(u)
&=
J(u,{\bf A}_u)\\
&=
\frac{1}{2} \|\nabla u\|_2^2
+\frac{m^2-\omega^2}{2}\|u\|_2^2
+ \frac{\ell^2}{2} \int \frac{u^2}{r^2}
+\frac{q\omega}{2}\int \phi_u u^2
-\frac{\ell q}{2}\int \nabla\theta\cdot{\bf A}_u u^2
- \int F(x,u)
\end{split}
\end{equation}
defined on $\hat{H}^1_\sharp$, which is of class $\cC^1$ by Lemmas \ref{Lempropphi} and \ref{le:A'}.


\subsection{The nonlinearity}
We conclude this section, showing some useful properties on the nonlinearity $f$. First of all we observe that if $f$ satisfies (\ref{SW1}) and (\ref{SW2}), then
\begin{equation}
\label{eq:estimF0}
\forall\varepsilon>0 \ \ \exists C_\eps >0 \hbox{ such that for a.e. }x\in\mathbb{R}^3 \hbox{ and } \forall s\in\mathbb{R} : |f(x,s)| \leq \eps |s| + C_\eps |s|^{p-1}
\end{equation}
and so, for all $u\in  H^1(\mathbb{R}^3)$,
\begin{equation}
\label{bdd1}
\int |f(x,u) u| \leq \eps \|u\|_2^2 + C_\eps \|u\|_p^p
\end{equation}
and
\begin{equation}
\label{eq:estFn}
\forall\varepsilon>0 \ \ \exists C_\eps >0 \hbox{ such that for a.e. }x\in\mathbb{R}^3 \hbox{ and } \forall u\in  H^1(\mathbb{R}^3) : 
\int F(x,u) \leq \eps \|u\|_2^2 + C_\eps \|u\|_p^p.
\end{equation}

Moreover we have
\begin{Lem}\label{Le29}
If $f$ satisfies (\ref{SW1})--(\ref{SW4}), for a.e. $x\in\mathbb{R}^3$ we have
\begin{equation}\label{eq:estimF1}
sf(x,s)\geq \sigma F(x,s)\geq 0
\hbox{ for all }
s\in\mathbb{R}
\end{equation}
and
\begin{equation}
\label{eq:estimF2}
F(x,ts)\geq F(x,s) t^\sigma
\hbox{ for all } s\in\mathbb{R},\ t\geq 1.
\end{equation}
If $f$ satisfies (\ref{SW1})--(\ref{SW3}) and (\ref{SW5}), for a.e. $x\in\mathbb{R}^3$ we have
\begin{equation}\label{eq:estimF1n}
sf(x,s)\geq 4 F(x,s)\geq 0
\hbox{ for all }
s\in\mathbb{R}
\end{equation}
and
\begin{equation}
\label{eq:estimF2n}
F(x,ts)\geq F(x,s) t^4
\hbox{ for all } s\in\mathbb{R},\ t\geq 1.
\end{equation}
\end{Lem}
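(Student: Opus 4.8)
The statement collects two pairs of elementary pointwise inequalities for the primitive $F$, one pair under \eqref{SW4} and one pair under \eqref{SW5}; since \eqref{SW5} is literally the statement \eqref{SW4} with $\sigma=4$, the two pairs are proved by the same computation and it suffices to treat the general-$\sigma$ case carefully. The plan is first to derive \eqref{eq:estimF1} (equivalently \eqref{eq:estimF1n}) from the differential inequality in \eqref{SW4} by integrating in $s$, and then to obtain the homogeneity-type estimate \eqref{eq:estimF2} (equivalently \eqref{eq:estimF2n}) from \eqref{eq:estimF1} by a one-variable ODE comparison argument in the scaling parameter $t$.

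For the first inequality I would fix $x$ (among the full-measure set where $f(x,\cdot)\in\cC^1$ and \eqref{SW4} holds) and work separately on $s>0$ and $s<0$; the case $s=0$ is trivial since $F(x,0)=0$ and, by \eqref{SW2}, $f(x,0)=0$. For $s>0$, set $g(s):=sf(x,s)-\sigma F(x,s)$. Then $g(0)=0$ and, differentiating, $g'(s)=f(x,s)+s\,\partial_s f(x,s)-\sigma f(x,s)=\partial_s f(x,s)\,s-(\sigma-1)f(x,s)$, which by \eqref{SW4} satisfies $g'(s)s\ge 0$, hence $g'(s)\ge0$ for $s>0$; thus $g(s)\ge g(0)=0$, i.e. $sf(x,s)\ge\sigma F(x,s)$. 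The symmetric computation handles $s<0$ (there $g'(s)\le0$, so $g$ is nonincreasing on $(-\infty,0)$ and again $g(s)\ge g(0)=0$). Finally $\sigma F(x,s)\ge0$ follows from \eqref{SW3} for $s\ne0$. This proves \eqref{eq:estimF1}; taking $\sigma=4$ gives \eqref{eq:estimF1n}.

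For the second inequality I would again fix such an $x$ and $s\ne0$ (the case $s=0$ being trivial) and define $h(t):=F(x,ts)$ for $t\ge1$. Then $h'(t)=f(x,ts)\,s=\dfrac{(ts)f(x,ts)}{t}\ge\dfrac{\sigma F(x,ts)}{t}=\dfrac{\sigma}{t}h(t)$ by \eqref{eq:estimF1} applied at the point $ts$. Since $F(x,s)\ge0$, either $h(1)=F(x,s)=0$ and the claimed inequality $h(t)\ge h(1)t^\sigma$ is trivial, or $h(1)>0$, in which case $h$ stays positive (it is nondecreasing, being $h'\ge0$) and we may write $\frac{d}{dt}\log h(t)\ge\sigma/t=\frac{d}{dt}\log t^\sigma$; integrating from $1$ to $t$ yields $\log\big(h(t)/h(1)\big)\ge\log t^\sigma$, i.e. $F(x,ts)\ge F(x,s)t^\sigma$. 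This is \eqref{eq:estimF2}, and again $\sigma=4$ gives \eqref{eq:estimF2n}.

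I do not expect a genuine obstacle here; the only points requiring a little care are the sign bookkeeping when passing from $g'(s)s\ge0$ to the monotonicity of $g$ on each of the two half-lines separately (so that one cannot simply say "$g$ is increasing"), and the harmless case distinction $F(x,s)=0$ versus $F(x,s)>0$ needed to justify dividing by $h(t)$ in the logarithmic-derivative step. Everything else is a routine one-variable calculus argument, and the $\sigma=4$ specialization requires no separate work.
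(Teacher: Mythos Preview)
Your argument under \eqref{SW1}--\eqref{SW4} is correct and essentially coincides with the paper's: for \eqref{eq:estimF2} the paper phrases the step as ``$t\mapsto F(x,ts)/t^\sigma$ is nondecreasing by \eqref{eq:estimF1}'', which is exactly your inequality $h'(t)\ge\sigma h(t)/t$ after dividing through by $t^\sigma$; for \eqref{eq:estimF1} the paper first records that \eqref{SW4} forces $s\mapsto f(x,s)/|s|^{\sigma-1}$ to be nondecreasing on each half-line and then integrates, while you differentiate $g(s)=sf(x,s)-\sigma F(x,s)$ directly. Both routes are routine and equivalent under \eqref{SW4}.

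There is, however, a genuine gap in your treatment of the \eqref{SW5} case. Assumption \eqref{SW5} is \emph{not} ``literally \eqref{SW4} with $\sigma=4$'': \eqref{SW4} requires $f(x,\cdot)\in\cC^1$ and imposes a differential inequality, whereas \eqref{SW5} only asserts the monotonicity of $u\mapsto f(x,u)/|u|^3$, with no differentiability of $f$ beyond the continuity in \eqref{SW1} (this is precisely why \eqref{SW5} is presented as a weakening of \eqref{SW4} for $\sigma\ge4$). Under \eqref{SW1}--\eqref{SW3} and \eqref{SW5} alone, your function $g$ need not be differentiable and the computation of $g'(s)$ is unjustified. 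The fix is to use the assumed monotonicity directly, as the paper does: for $s>0$ and $0<t\le s$ one has $f(x,t)\le f(x,s)\,t^3/s^3$, hence
\[
F(x,s)=\int_0^s f(x,t)\,dt\le \frac{f(x,s)}{s^3}\int_0^s t^3\,dt=\frac{sf(x,s)}{4},
\]
and an analogous computation on $(-\infty,0)$ yields \eqref{eq:estimF1n} without differentiating $f$. Your derivation of \eqref{eq:estimF2n} from \eqref{eq:estimF1n} uses only that $h(t)=F(x,ts)$ is $\cC^1$ (i.e.\ continuity of $f$), so it goes through unchanged.
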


\begin{proof}
It is easy to show that (\ref{SW4}) implies that $s\mapsto f (x,s)/|s|^{\sigma-1}$ is nondecreasing on $(-\infty, 0)$ and $(0,+\infty)$ and so \eqref{eq:estimF1} holds.
Moreover, since the function $t>0\mapsto F(x,ts)/t^\sigma $ is nondecreasing by \eqref{eq:estimF1}, we get \eqref{eq:estimF2}. Finally \eqref{eq:estimF1n} and \eqref{eq:estimF2n} can be obtained in the same way.
\end{proof}

\begin{Rem}
As in \cite[Lemma 9]{BF2009NA}, up to take $f(s)=0$ for $s<0$, we can show, using  \eqref{bohhh}, that the each solution $(u,\phi_u,{\bf A})$ of \eqref{P} has $u\geq 0$ a.e. in $\mathbb{R}^3$.
\end{Rem}

\section{Minimizing on the Nehari manifold}\label{section:Th1}
In this section we assume that (\ref{SW1})--(\ref{SW4}) hold and we look for minimizers of the reduced functional $\mathcal{J}$, defined in \eqref{J}, on its Nehari manifold
	\[
	\mathcal{N}:=
	\left\{u\in \hat{H}^1_\sharp\setminus\{0\} : \mathcal{J}'(u)[u]=0	\right\}.
	\]

First we prove
\begin{Lem}\label{lem:3_1}
For any $u\in \hat{H}^1_\sharp\setminus\{0\}$ there exists $\bar t>0$ such that  $\bar t u\in \mathcal{N}$ and so
$\mathcal{N}$ is nonempty.
\end{Lem}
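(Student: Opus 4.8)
The plan is to study, for a fixed $u\in\hat H^1_\sharp\setminus\{0\}$, the real-valued function $g(t):=\mathcal{J}(tu)$ for $t\geq 0$, and to show it has a strictly positive critical point. Since $tu\in\mathcal{N}$ is equivalent to $g'(t)=t\,\mathcal{J}'(tu)[u]=0$ (for $t>0$), it suffices to find $\bar t>0$ with $g'(\bar t)=0$. I would first write $g(t)$ explicitly using the formula \eqref{J} for $\mathcal{J}$, keeping in mind that $\phi_u$ and ${\bf A}_u$ depend on the argument $tu$, not just on $u$; I would denote these $\phi_{tu}$ and ${\bf A}_{tu}$. By \eqref{124} we have $0\le q\omega\int\phi_{tu}(tu)^2\le\omega^2 t^2\|u\|_2^2$, and by \eqref{Auineq} we have $0\le \ell q\int\nabla\theta\cdot{\bf A}_{tu}(tu)^2\le \ell^2 t^2\int u^2/r^2$. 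Combining these bounds with $\int F(x,tu)\ge 0$ (from \eqref{SW3}) and the behaviour $F(x,ts)\ge F(x,s)t^\sigma$ for $t\ge 1$ from \eqref{eq:estimF2}, I get
\[
g(t)\;\le\;\frac{t^2}{2}\Big(\|\nabla u\|_2^2+(m^2-\omega^2)\|u\|_2^2+\ell^2\!\int\!\frac{u^2}{r^2}+\omega^2\|u\|_2^2+\ell^2\!\int\!\frac{u^2}{r^2}\Big)-t^\sigma\!\int_{\{|u|\ge 1\}}\!\!F(x,u),
\]
valid for $t\ge 1$; since $\sigma>2$ and $\int_{\{|u|\ge1\}}F(x,u)>0$ (as $u\not\equiv 0$ and $F>0$), this shows $g(t)\to-\infty$ as $t\to+\infty$.

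Next I would check that $g(t)>0$ for $t>0$ small. Using \eqref{eq:estFn} with a small $\eps$, together with the nonnegativity of the $\phi_{tu}$- and ${\bf A}_{tu}$-terms, I get $g(t)\ge \frac{t^2}{2}\|u\|^2 - \eps t^2\|u\|_2^2 - C_\eps t^p\|u\|_p^p$ where $\|\cdot\|$ is the norm on $\hat H^1_\sharp$ (note $\|\nabla u\|_2^2+(m^2-\omega^2)\|u\|_2^2+\ell^2\int u^2/r^2=\|u\|^2$, and $\ell\ne 0$). Choosing $\eps$ small enough that $\eps\|u\|_2^2<\tfrac14\|u\|^2$, we have $g(t)\ge \tfrac14 t^2\|u\|^2 - C_\eps t^p\|u\|_p^p>0$ for $t$ small since $p>2$. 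In particular $g(0)=0$, $g>0$ near $0$, and $g\to-\infty$, so $g$ attains a positive maximum at some $\bar t\in(0,\infty)$, whence $g'(\bar t)=0$ and $\bar t u\in\mathcal{N}$; thus $\mathcal{N}\ne\emptyset$.

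The one point requiring care — and the main obstacle — is that $g'(t)$ is not simply the derivative of the "frozen" expression, because $\phi_{tu}$ and ${\bf A}_{tu}$ vary with $t$. Here I would invoke that $\phi_u$, respectively ${\bf A}_u$, is the \emph{minimizer} of $K$, respectively ${\bf K}$ (Lemma \ref{le:phipsi} and Lemma \ref{le:A}), so that the partial derivatives of $I$ (equivalently of $J$) with respect to $\phi$ and ${\bf A}$ vanish at $(\phi_{tu},{\bf A}_{tu})$; consequently the chain-rule contributions through $\Phi'$ and $\mathscr{A}'$ drop out, and $\mathcal{J}'(tu)[u]=\partial_u J(tu,{\bf A}_{tu})[u]$. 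Alternatively, and more simply for this existence statement, I do not even need the precise form of $g'$: the bounds above already establish $g(0)=0$, $g>0$ on a punctured neighbourhood of $0$, and $g(t)\to-\infty$, so the continuous function $g$ has an interior maximum on $(0,\infty)$ at which $g'$ vanishes. This yields $\bar t>0$ with $\bar t u\in\mathcal{N}$ and completes the proof.
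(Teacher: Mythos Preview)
Your overall strategy---study $g(t)=\mathcal{J}(tu)$, show $g(0)=0$, $g>0$ near $0$, $g\to-\infty$, and conclude a maximum exists---is exactly the paper's approach. However, two steps in your execution are not quite right.

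First, in the upper bound you restrict to $\{|u|\ge 1\}$ and claim $\int_{\{|u|\ge 1\}}F(x,u)>0$ ``as $u\not\equiv 0$ and $F>0$''. This is false in general: a nonzero $u\in\hat H^1_\sharp$ may satisfy $|u|<1$ a.e., so that set can be empty. The restriction is also unnecessary: \eqref{eq:estimF2} gives $F(x,ts)\ge t^\sigma F(x,s)$ for \emph{all} $s\in\R$ and $t\ge 1$, hence $\int F(x,tu)\ge t^\sigma\int F(x,u)$ and, by \eqref{SW3}, $\int F(x,u)>0$. This is what the paper does.

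Second, in the lower bound you say the ``${\bf A}_{tu}$-term'' is nonnegative and then keep the full $\hat H^1$-norm including $\ell^2\int u^2/r^2$. But the term $-\tfrac{\ell q}{2}t^2\int\nabla\theta\cdot{\bf A}_{tu}\,u^2$ is $\le 0$ by \eqref{Auineq}, not $\ge 0$. What \emph{is} true, via \eqref{comevuoi}, is that the combination $\tfrac{\ell^2}{2}t^2\int u^2/r^2-\tfrac{\ell q}{2}t^2\int\nabla\theta\cdot{\bf A}_{tu}\,u^2\ge 0$. So you must drop both terms together (as the paper does), obtaining $g(t)\ge\tfrac{t^2}{2}\|\nabla u\|_2^2+\tfrac{m^2-\omega^2}{2}t^2\|u\|_2^2-\eps t^2\|u\|_2^2-C_\eps t^p\|u\|_p^p$, which still yields $g(t)>0$ for small $t$. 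With these two fixes your argument is complete and matches the paper's.
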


\begin{proof}
	Let $u\neq 0$ and consider
	\begin{align*}
	j(t):
	&=
	\mathcal{J}(tu)\\
	&=
	\frac{t^2}{2} \|\nabla u\|_2^2
	+\frac{m^2-\omega^2}{2} t^2 \|u\|_2^2
	+\frac{\ell^2}{2} t^2 \int\frac{u^2}{r^2}
	+\frac{q\omega}{2}t^2\int \phi_{tu} u^2
	- \frac{\ell q}{2} t^2\int \nabla\theta\cdot{\bf A}_{tu} u^2
	\\
	&\quad
	- \int F(x,tu).
	\end{align*}
	We have that $j(0)= 0$ and, using also \eqref{Auineq} and \eqref{eq:estimF2}, we have
	\[
	j(t)\leq
	\frac{t^2}{2} \|\nabla u\|_2^2
	+\frac{m^2-\omega^2}{2} t^2 \|u\|_2^2
	+\frac{\ell^2}{2} t^2 \int\frac{u^2}{r^2}
	+\frac{q\omega}{2}t^2\int \phi_{tu} u^2
	- t^\sigma \int F(x,u) \to -\infty
	\]
	as $t\to +\infty$, being $\sigma>2$.
	Moreover, by \eqref{bohhh}, \eqref{comevuoi} and \eqref{eq:estFn},
	\[
	j(t)
	\geq
	\frac{t^2}{2} \|\nabla u\|_2^2
	+\frac{m^2-\omega^2}{2} t^2 \|u\|_2^2
	- \eps t^2 \|u\|_2^2 - C_\eps t^p \|u\|_p^p
	\geq
	C(t^2-t^p).
	\]
	Hence we get that $j$ admits a maximum point $\bar t>0$ and
	$0=j'(\bar t)= \mathcal{J}'(\bar tu)[u]$,
	so that $\bar tu\in\mathcal{N}$.
\end{proof}

Moreover we have
\begin{Lem}
	\label{le:disc}
There exists $C>0$ such that for every $u\in\mathcal{N}$, $\|u\|_p\geq C$.
\end{Lem}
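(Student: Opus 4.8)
The plan is to extract an a priori lower bound directly from the Nehari identity $\mathcal{J}'(u)[u]=0$. First I would compute $\mathcal{J}'(u)[u]$ by differentiating the explicit expression \eqref{J} for $\mathcal{J}$. The terms in \eqref{J} that depend on $u$ through $\phi_u$ and ${\bf A}_u$ contribute via $\psi_u=\Phi'(u)[u]/2$ and $\Psi_u=\mathscr{A}'(u)[u]/2$; using the identities \eqref{cacca2} and \eqref{cacca2a} from Lemmas \ref{le:phipsi} and \ref{le:A'}, together with the algebraic expansions $(\omega-q\phi_u)^2=\omega^2-2q\omega\phi_u+q^2\phi_u^2$ and $|\ell\nabla\theta-q{\bf A}_u|^2=\ell^2/r^2-2\ell q\,\nabla\theta\cdot{\bf A}_u+q^2|{\bf A}_u|^2$ (recall $|\nabla\theta|^2=1/r^2$), all the $\psi_u$- and $\Psi_u$-terms collapse and one is left, for $u\in\mathcal{N}$, with the identity
\[
\|\nabla u\|_2^2 + m^2\|u\|_2^2 + \int|\ell\nabla\theta-q{\bf A}_u|^2 u^2 = \int(\omega-q\phi_u)^2 u^2 + \int f(x,u)\,u.
\]
(Equivalently, since $\mathcal{J}(u)=J(u,{\bf A}_u)=I(u,\phi_u,{\bf A}_u)$ with ${\bf A}_u$ and $\phi_u$ critical for the respective partial functionals, one has $\mathcal{J}'(u)[v]=\partial_uI(u,\phi_u,{\bf A}_u)[v]$, which gives the same identity at once.)

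Next I would estimate both sides. On the left, the magnetic term $\int|\ell\nabla\theta-q{\bf A}_u|^2 u^2$ is nonnegative, so it may be dropped. On the right, \eqref{bohhh} gives $0\le\omega-q\phi_u\le\omega$, hence $\int(\omega-q\phi_u)^2 u^2\le\omega^2\|u\|_2^2$. Therefore
\[
\|\nabla u\|_2^2 + (m^2-\omega^2)\|u\|_2^2 \le \int f(x,u)\,u.
\]
Since $m^2>\omega^2$ (which holds in both cases of \eqref{condomegamsigma}), the left-hand side is bounded below by $c_0\big(\|\nabla u\|_2^2+\|u\|_2^2\big)=c_0\|u\|_{H^1}^2$ with $c_0=\min\{1,m^2-\omega^2\}>0$, and by the Sobolev embedding $\hat H^1_\sharp\subset H^1(\mathbb{R}^3)\hookrightarrow L^p(\mathbb{R}^3)$ (valid because $2<p<6$) this dominates $c_0 C_S^{-2}\|u\|_p^2$. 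Applying \eqref{bdd1} with $\eps=(m^2-\omega^2)/2$ to the right-hand side yields $\int f(x,u)u\le\tfrac{m^2-\omega^2}{2}\|u\|_2^2+C_\eps\|u\|_p^p$, and absorbing the $\|u\|_2^2$ term on the left we obtain $c_1\|u\|_p^2\le C_\eps\|u\|_p^p$ for some $c_1>0$. Because $u\neq0$ on $\mathcal{N}$ we have $\|u\|_p>0$, so dividing gives $\|u\|_p^{p-2}\ge c_1/C_\eps$, i.e. $\|u\|_p\ge (c_1/C_\eps)^{1/(p-2)}=:C>0$.

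The only genuinely delicate step is the computation of $\mathcal{J}'(u)[u]$: one must track carefully how the derivatives of $u\mapsto\phi_u$ and $u\mapsto{\bf A}_u$ enter, and verify that \eqref{cacca2}–\eqref{cacca2a} make them cancel back to the "formal" Nehari identity — this is exactly the reflection of $\phi_u$ and ${\bf A}_u$ solving their own equations. Once that identity is in hand, the remainder is routine: the positivity facts are already recorded in Lemma \ref{le:phipsi}, the subcritical growth estimate \eqref{bdd1} is available, and the hypothesis $m>\omega$ is precisely what makes the quadratic part coercive. Note also that this argument is independent of $\mu\in\R$, since only $0\le\omega-q\phi_u\le\omega$ and the sign of the magnetic term were used.
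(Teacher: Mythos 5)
Your proof is correct and follows essentially the same route as the paper: both exploit the Nehari identity $\mathcal{J}'(u)[u]=\partial_u I(u,\phi_u,{\bf A}_u)[u]=0$ (which the paper uses directly, while you justify the cancellation of the $\psi_u$, $\Psi_u$ terms via \eqref{cacca2} and \eqref{cacca2a}), then drop the nonnegative magnetic term, use $0\le\phi_u\le\omega/q$ to bound the $(\omega-q\phi_u)^2$ term by $\omega^2\|u\|_2^2$, and combine \eqref{bdd1} with the Sobolev embedding and $p>2$ to get the uniform lower bound on $\|u\|_p$.
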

\begin{proof}
	Let $u\in\mathcal{N}$, then, using \eqref{bohhh}, we have
	\begin{align*}
	\int f(x,u)u 
	&=
	\|\nabla u\|_2^2
	+m^2\| u\|_2^2  
	+ \int |\ell\nabla\theta - q {\bf A}_u|^2 u^2 
	- \int (\omega-q\phi_u)^2 u^2 \\
	&\geq
	\|\nabla u\|_2^2
	+(m^2-\omega^2)\| u\|_2^2  
	+q \int (2\omega-q\phi_u)\phi_u u^2 \\
	&\geq
	\|\nabla u\|_2^2
	+(m^2-\omega^2)\| u\|_2^2.
	\end{align*}
	Hence, by \eqref{bdd1} we have that
	\begin{equation*}
	\|\nabla u\|_2^2 + (m^2-\omega^2-\varepsilon) \|u\|_2^2 \leq C_\eps \|u\|_p^p 
	\end{equation*}
	 and using Sobolev inequality we conclude.
\end{proof}

We have also
\begin{Lem}\label{pr:nc}
	$\mathcal{N}$ is a natural constraint.
\end{Lem}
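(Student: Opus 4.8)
The plan is to show that $\mathcal{N}$ is a natural constraint by proving that any constrained critical point of $\mathcal{J}|_{\mathcal{N}}$ is in fact a free critical point of $\mathcal{J}$ on $\hat{H}^1_\sharp$. First I would verify that $\mathcal{N}$ is a $\cC^1$ manifold near each of its points: setting $G(u):=\mathcal{J}'(u)[u]$, the set $\mathcal{N}$ is the zero set of $G$ restricted to $\hat{H}^1_\sharp\setminus\{0\}$, and it suffices to check $G'(u)[u]\neq 0$ for every $u\in\mathcal{N}$. This is where the monotonicity hypothesis \eqref{SW4} enters decisively: using Lemmas \ref{Lempropphi} and \ref{le:A'} to differentiate the terms $\int\phi_{tu}u^2$ and $\int\nabla\theta\cdot{\bf A}_{tu}u^2$ along the ray $t\mapsto tu$, together with the identities \eqref{cacca2}, \eqref{cacca2a} and the sign information \eqref{bohhh}, \eqref{Auineq}, one sees that on $\mathcal{N}$
\[
G'(u)[u]=\frac{d}{dt}\Big|_{t=1}\mathcal{J}'(tu)[tu]/t
\]
picks up a strictly negative contribution from the nonlinear term, because \eqref{SW4} forces $\partial_s f(x,s)s^2-(\sigma-1)f(x,s)s\geq 0$ and hence (after combining with $\sigma>2$ and the reduced-functional structure) $G'(u)[u]<0$. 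More concretely, I would compute $j''(1)$ for $j(t)=\mathcal{J}(tu)$ at a point $\bar t=1$ where $j'(1)=0$ and show $j''(1)<0$, which simultaneously gives that the maximum point in Lemma \ref{lem:3_1} is unique and that $0$ is a regular value of $G$.

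Next, given $u\in\mathcal{N}$ with $\mathcal{J}'(u)[v]=\lambda G'(u)[v]$ for all $v\in\hat{H}^1_\sharp$ (the Lagrange multiplier rule), I would test with $v=u$: since $u\in\mathcal{N}$ we have $\mathcal{J}'(u)[u]=0$, so $0=\lambda G'(u)[u]$, and because $G'(u)[u]\neq 0$ by the previous step we conclude $\lambda=0$, hence $\mathcal{J}'(u)=0$ on $\hat{H}^1_\sharp$. Combined with Lemma \ref{eq:lemNaturalConstr} (applied with ${\bf A}={\bf A}_u$, noting that $\partial_{\bf A}J(u,{\bf A}_u)=0$ automatically by the definition of ${\bf A}_u$) and the chain rule $\mathcal{J}'(u)=\partial_u J(u,{\bf A}_u)$, this yields that $(u,{\bf A}_u)$ is a critical point of $J$ on $V$, and therefore $(u,\phi_u,{\bf A}_u)$ solves \eqref{P}.

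The main obstacle I anticipate is the sign computation for $G'(u)[u]$: the reduced functional $\mathcal{J}$ contains the two nonlocal, nonquadratic terms $\frac{q\omega}{2}\int\phi_{tu}u^2$ and $-\frac{\ell q}{2}\int\nabla\theta\cdot{\bf A}_{tu}u^2$, whose second derivatives in $t$ are not obviously of the right sign. One must exploit that $t\mapsto\int\phi_{tu}u^2$ and (after a sign change) $t\mapsto\int\nabla\theta\cdot{\bf A}_{tu}u^2$ are, respectively, "nice" enough — using \eqref{bohhh}, \eqref{bohhh}, \eqref{cacca2}, \eqref{cc3} and the analogous \eqref{Auineq}, \eqref{ciaciacia}, \eqref{cacca2a} — so that these terms, when inserted into $j''(1)$ via the relation $j'(1)=0$, contribute with a controllable sign; the genuinely decreasing part then comes from the term $\int\big(\partial_s f(x,u)u^2-f(x,u)u\big)$, which by \eqref{SW4} dominates the quadratic pieces precisely under the restriction $\sigma>2$. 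I would organize this by writing $j'(t)=0\iff t^{-1}(\text{quadratic-in-}t\text{ part})=\int f(x,tu)u$, differentiating once more, and substituting back $j'(1)=0$ to cancel the quadratic leading terms, leaving an expression that \eqref{SW4} makes strictly negative — which is exactly what the parallel argument in \cite{SzulkinWeth} and in the Klein–Gordon–Maxwell literature does, but here with the extra Proca and vortex terms carried along.
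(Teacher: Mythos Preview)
Your overall strategy is exactly the paper's: set $G(u)=\mathcal{J}'(u)[u]$, apply the Lagrange multiplier rule, and kill the multiplier by showing $G'(u)[u]<0$ on $\mathcal{N}$. The identification $G'(u)[u]=j''(1)$ when $j'(1)=0$ is fine, and the tools you list (Lemmas~\ref{Lempropphi}, \ref{le:A'}, identities \eqref{cacca2}, \eqref{cc3}, \eqref{ciaciacia}, \eqref{cacca2a}) are precisely what the paper uses.

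There is, however, a real gap in your description of \emph{why} the sign comes out right. You say the ``genuinely decreasing part'' comes from $\int(\partial_s f(x,u)u^2-f(x,u)u)$ and that \eqref{SW4} makes this dominate the quadratic pieces ``precisely under the restriction $\sigma>2$''. That is not how the argument closes. After you substitute $\mathcal{J}'(u)[u]=0$ and invoke \eqref{SW4}, the nonlinear contribution becomes $(\sigma-1)\int f(x,u)u-\int\partial_s f(x,u)u^2\leq 0$; it is merely nonpositive, it does not dominate anything. What remains is
\[
(2-\sigma)\Big[\|\nabla u\|_2^2+m^2\|u\|_2^2+\int|\ell\nabla\theta-q{\bf A}_u|^2u^2-\int(\omega-q\phi_u)^2u^2\Big]
+4q\!\int(\omega-q\phi_u)\psi_u u^2
-4q\!\int(\ell\nabla\theta-q{\bf A}_u)\cdot\Psi_u u^2.
\]
The $\Psi_u$ term is $\leq 0$ by \eqref{ciaciacia}, but the $\psi_u$ term is \emph{positive} (since $0\leq\psi_u\leq\phi_u\leq\omega/q$) and must be absorbed. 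Using \eqref{cacca2} and \eqref{cc3} one rewrites the $\phi_u,\psi_u$ pieces as a pointwise quadratic in $\psi_u$,
\[
(\sigma-2)(m^2-\omega^2)+2(\sigma-4)q\omega\psi_u+4q^2\psi_u^2,
\]
and it is the positivity of this quadratic, not the nonlinear term, that forces $G'(u)[u]<0$. For $\sigma\geq 4$ this is automatic, but for $2<\sigma<4$ it requires exactly the discriminant condition \eqref{condomegamsigma}; the inequality $\sigma>2$ alone is not enough. You should make the role of \eqref{condomegamsigma} explicit in your plan, because without it the sign computation fails in the subcritical range.
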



\begin{proof}
	Let $u\in \mathcal{N}$ be a critical point of $\mathcal{J}|_{\mathcal{N}}$, then there exists $\lambda\in \R$ such that $$\mathcal{J}'(u)=
	\lambda \partial_u (\mathcal{J}'(u)[u]).$$ Our aim is to show that $\lambda=0$. Since 
	\[
	0
	=
	\mathcal{J}'(u)[u]
	=
	\lambda \partial_u (\mathcal{J}'(u)[u]) [u],
	\]
we conclude if we prove that $\partial_u (\mathcal{J}'(u)[u]) [u]\neq 0$. We have that
	\begin{align*}
	\partial_u (\mathcal{J}'(u)[u]) [u]
	&=
	2 \|\nabla u\|_2^2
	+2 \int \left[m^2-(\omega-q\phi_u)^2 \right]u^2
	+ 2\int |\ell\nabla\theta - q {\bf A}_u|^2 u^2
	\\
	&\quad
	-4q\int (\ell\nabla\theta-q{\bf A}_u)\cdot\Psi_u u^2\
	+ 4q \int (\omega-q\phi_u)\psi_u u^2
	- \int f(x,u)u
	\\
	&\quad
	- \int \de_u f(x,u)u^2
	\\
	&=
	(2- \sigma) \|\nabla u\|_2^2
	+(2- \sigma)m^2\| u\|_2^2  
	+ (2- \sigma)\int |\ell\nabla\theta - q {\bf A}_u|^2 u^2 
	\\
	&\quad
	-4q\int (\ell\nabla\theta-q{\bf A}_u)\cdot\Psi_u u^2
	+ 4q \int (\omega-q\phi_u)\psi_u u^2 
	- (2- \sigma) \int (\omega-q\phi_u)^2 u^2   
	\\
	&\quad
	+ (\sigma-1) \int f(x,u)u 
	- \int \de_u f(x,u)u^2
	\\
	\hbox{(by \eqref{SW4} and \eqref{ciaciacia})}
	&\leq  
	(2- \sigma) \|\nabla u\|_2^2 
	+(2- \sigma)m^2\| u\|_2^2
	+ 4q \int (\omega-q\phi_u)\psi_u u^2  \\
	&\quad
	- (2- \sigma) \int (\omega-q\phi_u)^2 u^2 \\
	\hbox{(by \eqref{cacca2})}
	&=
	(2- \sigma) \|\nabla u\|_2^2 \\
	&\quad
	+\int \left[
	(2- \sigma)(m^2-\omega^2)
	+(2- \sigma) q\omega\phi_u
	+(6- \sigma) q \omega\psi_u
	- 4q^2\phi_u\psi_u
	\right]u^2
	\\
	\hbox{(by \eqref{cc3})}&\leq
	-(\sigma-2) \|\nabla u\|_2^2 
	-\int \left[
	(\sigma-2)(m^2-\omega^2)
	+2(\sigma-4) q \omega\psi_u
	+ 4q^2\psi_u^2
	\right]u^2.
	\end{align*}
	As in \cite{Wang}, simple calculations show that
	\[
	\left[
	(\sigma-2)(m^2-\omega^2)
	+2(\sigma-4) q \omega\psi_u
	+ 4q^2\psi_u^2
	\right]
	\geq
	\begin{cases}
	(\sigma-2)(m^2-\omega^2) & \hbox{for }\sigma\geq 4\\
	\displaystyle (\sigma-2)m^2 - \frac{\sigma^2 - 4 \sigma +8}{4}\omega^2 & \hbox{for }2<\sigma< 4
	\end{cases}
	\]
	a.e. in $\mathbb{R}^3$ and then, by \eqref{condomegamsigma} and Lemma \ref{le:disc}, we get $\partial_u (\mathcal{J}'(u)[u]) [u] <-C(\|\nabla u\|_2^2 + \|u\|_2^2)<-C<0$.
\end{proof}

\begin{Lem}\label{le:bbelow}
	The functional $\mathcal{J}$ is bounded from below on $\mathcal{N}$.
\end{Lem}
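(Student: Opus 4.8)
The plan is to use the Nehari identity $\mathcal{J}'(u)[u]=0$ to rewrite, for $u\in\mathcal{N}$, the value $\mathcal{J}(u)$ as $\mathcal{J}(u)-\tfrac1\sigma\mathcal{J}'(u)[u]$, and then to check that the resulting expression is a sum of nonnegative terms plus one term controlled by $\|u\|_2^2$; this will in fact yield the stronger conclusion $\mathcal{J}(u)\ge0$ on $\mathcal{N}$. It is cleanest to use the representation $\mathcal{J}(u)=I(u,\phi_u,{\bf A}_u)$ rather than formula \eqref{J}, because in the $I$-form all the magnetic contributions $\tfrac12\|\nabla\times{\bf A}_u\|_2^2+\tfrac{\mu^2}{2}\|{\bf A}_u\|_2^2+\tfrac12\int|\ell\nabla\theta-q{\bf A}_u|^2u^2$ are manifestly nonnegative and no auxiliary functions $\psi_u,\Psi_u$ appear. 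Recalling from the proof of Lemma \ref{le:disc} that
\[
\mathcal{J}'(u)[u]=\|\nabla u\|_2^2+m^2\|u\|_2^2+\int|\ell\nabla\theta-q{\bf A}_u|^2u^2-\int(\omega-q\phi_u)^2u^2-\int f(x,u)u,
\]
one subtracts $\tfrac1\sigma$ times this from $I(u,\phi_u,{\bf A}_u)$ and obtains
\begin{align*}
\mathcal{J}(u)&=\Big(\tfrac12-\tfrac1\sigma\Big)\|\nabla u\|_2^2+\tfrac12\|\nabla\times{\bf A}_u\|_2^2+\tfrac{\mu^2}{2}\|{\bf A}_u\|_2^2+\Big(\tfrac12-\tfrac1\sigma\Big)\int|\ell\nabla\theta-q{\bf A}_u|^2u^2\\
&\quad+\int\Big(\tfrac1\sigma f(x,u)u-F(x,u)\Big)+\Big(\tfrac12-\tfrac1\sigma\Big)m^2\|u\|_2^2-\tfrac12\big(\|\nabla\phi_u\|_2^2+\mu^2\|\phi_u\|_2^2\big)-\Big(\tfrac12-\tfrac1\sigma\Big)\int(\omega-q\phi_u)^2u^2.
\end{align*}

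Since $\sigma>2$, the four summands on the first line are nonnegative, and $\int\big(\tfrac1\sigma f(x,u)u-F(x,u)\big)\ge0$ by \eqref{eq:estimF1}; so the whole question reduces to the three remaining scalar terms. Here I would invoke \eqref{bohh} to replace $\|\nabla\phi_u\|_2^2+\mu^2\|\phi_u\|_2^2$ by $q\int(\omega-q\phi_u)\phi_u u^2$ and put $a:=\omega-q\phi_u$, which by \eqref{bohhh} satisfies $0\le a\le\omega$ a.e.\ in $\R^3$. An elementary simplification then collapses those three terms to $\int\big[(\tfrac12-\tfrac1\sigma)m^2-\tfrac\omega2 a+\tfrac1\sigma a^2\big]u^2\,dx$, so the whole estimate comes down to the pointwise claim that, under \eqref{condomegamsigma},
\[
P(a):=\Big(\tfrac12-\tfrac1\sigma\Big)m^2-\tfrac\omega2\,a+\tfrac1\sigma a^2\ \ge\ \delta\ >\ 0\qquad\text{for all }a\in[0,\omega],
\]
with $\delta$ depending only on $m,\omega,\sigma$.

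To verify this, note that $P$ is a convex parabola with vertex at $a=\sigma\omega/4$. If $\sigma\ge4$ the vertex lies at or to the right of $\omega$, hence $\min_{[0,\omega]}P=P(\omega)=\big(\tfrac12-\tfrac1\sigma\big)(m^2-\omega^2)>0$ precisely because $m>\omega$. If $2<\sigma<4$ the minimum is attained at the vertex and equals $P(\sigma\omega/4)=\big(\tfrac12-\tfrac1\sigma\big)m^2-\tfrac\sigma{16}\omega^2$, which is positive by \eqref{condomegamsigma}: indeed $\tfrac{\sigma^2-4\sigma+8}{4}=\tfrac{\sigma^2+(\sigma-4)^2}{8}\ge\tfrac{\sigma^2}{8}$, so \eqref{condomegamsigma} forces $(\sigma-2)m^2>\tfrac{\sigma^2}{8}\omega^2$, i.e.\ $\big(\tfrac12-\tfrac1\sigma\big)m^2>\tfrac\sigma{16}\omega^2$. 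Combining everything gives $\mathcal{J}(u)\ge\big(\tfrac12-\tfrac1\sigma\big)\|\nabla u\|_2^2+\delta\|u\|_2^2\ge0$ for every $u\in\mathcal{N}$, which proves the lemma (and even an estimate of the form $\mathcal{J}(u)\ge c(\|\nabla u\|_2^2+\|u\|_2^2)$). I expect the main hurdle to be the bookkeeping in forming $\mathcal{J}(u)-\tfrac1\sigma\mathcal{J}'(u)[u]$ cleanly — especially checking that the ${\bf A}_u$-dependent pieces stay nonnegative and do not conspire into an uncontrolled negative contribution — since once the estimate is reduced to the scalar polynomial $P$, the conclusion is exactly what \eqref{condomegamsigma} is tailored to deliver.
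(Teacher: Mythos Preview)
Your proof is correct and follows essentially the same strategy as the paper: form $\mathcal{J}(u)-\tfrac{1}{\sigma}\mathcal{J}'(u)[u]$, drop the nonnegative magnetic and nonlinear pieces via \eqref{eq:estimF1}, and reduce to a pointwise quadratic in $q\phi_u$ (your $a=\omega-q\phi_u$ is just a change of variable) whose positivity is exactly \eqref{condomegamsigma}. The only cosmetic difference is that you work with the $I$-representation so that the ${\bf A}_u$-terms are manifestly nonnegative, whereas the paper starts from formula \eqref{J} and invokes \eqref{Aueq} and \eqref{Auineq}/\eqref{comevuoi} to reach the same conclusion.
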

\begin{proof}
	Let $u\in\mathcal{N}$. By \eqref{eq:estimF1}, \eqref{Aueq} and \eqref{comevuoi} we have
	\begin{align*}
	\mathcal{J}(u)
	&\geq
	\frac{1}{2} \|\nabla u\|_2^2
	+\frac{m^2-\omega^2}{2}\|u\|_2^2
	+ \frac{\ell^2}{2} \int \frac{u^2}{r^2}
	+\frac{q\omega}{2}\int \phi_u u^2
	-\frac{\ell q}{2}\int \nabla\theta\cdot{\bf A}_u u^2
	- \frac{1}{\sigma} \int f(x,u) u 
	\\
	&=
	\frac{\sigma-2}{2\sigma} \|\nabla u\|_2^2
	+ \frac{\sigma-2}{2\sigma} (m^2-\omega^2) \|u\|_2^2
	+\frac{\sigma-4}{2\sigma}q\omega \int \phi_u u^2
	+ \frac{q^2}{\sigma} \int \phi_u^2 u^2
	\\
	&\quad
	+ \frac{\sigma-2}{2\sigma} \left[\ell^2  \int \frac{u^2}{r^2}
	-\ell q\int \nabla\theta\cdot{\bf A}_u u^2\right]
	+\frac{q}{\sigma} \int (\ell\nabla\theta-q{\bf A}_u)\cdot{\bf A}_u u^2
	\\
	&\geq
	\frac{\sigma-2}{2\sigma} \|\nabla u\|_2^2
	+\frac{1}{2\sigma}\int \left[
	(m^2-\omega^2)(\sigma-2)
	+(\sigma-4)q\omega \phi_u
	+ 2q^2 \phi_u^2 
	\right]u^2.
	\end{align*}
	Simple calculations show that
	\[
	\left[
	(m^2-\omega^2)(\sigma-2)
	+(\sigma-4)q\omega \phi_u
	+ 2q^2 \phi_u^2 
	\right]
	\ge
	\begin{cases}
	(\sigma-2)(m^2-\omega^2)
	& \hbox{for }\sigma\geq 4\\
	\displaystyle(\sigma-2)m^2 - \frac{\sigma^2}{8}\omega^2
	& \hbox{for }2<\sigma< 4
	\end{cases}
	\]
	a.e. in $\mathbb{R}^3$ and then, 
	by \eqref{condomegamsigma} and Lemma \ref{le:disc}, we conclude.
\end{proof}
Now we can complete the proof.
\begin{proof}[Proof of Theorem \ref{ThMain1} concluded]
	Let $\{u_n\}\subset\mathcal{N}$ be a minimizing sequence, i.e.
	\[
	\lim_n \mathcal{J} (u_n) = \inf_{u\in\mathcal{N}} \mathcal{J}(u).
	\]
	As a first step, we want to prove that $\{u_n\}$ is bounded in $\hat H^1$.
	\\
	Arguing as in the proof of Lemma \ref{le:bbelow} we get that $\{u_n\}$ is bounded in $H^1(\mathbb{R}^3)$.
	Hence, we need to prove the boundedness of $\{u_n\}$ in $\hat{H}^1$. If we set ${\bf A}_n={\bf A}_{u_n}$, by \eqref{eq:estFn} we have
	\begin{equation}
	\label{estimcalJ3}
	\mathcal{J}(u_n)
	\geq
	\frac{1}{2} \|\nabla u_n\|_2^2
	+\frac{m^2-\omega^2-\varepsilon}{2}\|u_n\|_2^2
	+\frac{\ell^2}{2}\int \frac{u_n^2}{r^2}
	-\frac{q\ell}{2} \int \nabla\theta \cdot{\bf A}_n u_n^2
	- C_\eps \|u_n\|_p^p
	\end{equation}
	and so, if $\eps$ small enough, since $\{u_n\}$ is bounded in $H^1(\mathbb{R}^3)$ and, by \eqref{comevuoi},
	we get that $\{{\bf A}_n\}$ is bounded in $(H_\mu)^3$ and so in $(L^6(\R^3))^3$. Moreover
	\begin{equation}
	\label{eq:mixterm3}
	\begin{split}
	\int q|{\bf A}_n|\frac{\ell}{r} u_n^2
	& \leq
	8 q^2\int |{\bf A}_n|^2 u_n^2 
	+ \frac{\ell^2}{8} \int \frac{u_n^2}{r^2}
	\\
	& \leq
	C \|{\bf A}_n\|_6^2 \|u_n\|_3^2
	+ \frac{\ell^2}{8} \int \frac{u_n^2}{r^2}
	\\
	& \leq
	C
	+ \frac{\ell^2}{8} \int \frac{u_n^2}{r^2}
	\end{split}
	\end{equation}
	and so, combining \eqref{eq:mixterm3} with \eqref{estimcalJ3}, we get the boundedness of $\{u_n\}$ in $\hat{H}^1$, as desired.\\
	By Lemma \ref{le:disc} and by the classical Lions Lemma \cite[Lemma I.1]{Lions842}, there are $\delta>0$ and a sequence $\{z_n\}\subset \Z^3$,
	such that
	\begin{equation}\label{formulacentrata}
\int_{B_1(z_n)}|u_n(x)|^2 \geq \delta
\end{equation}
	for any $n$.  Let us consider the cylindrical group action $G=O(2)\times \operatorname{Id}\subset O(3)$ on $\R^3$. Observe that in the family $\{B_1(gz_n)\}_{g\in G}$ we find an increasing number of disjoint balls when $r_n=|z_n\cdot(e_1+e_2)|\to+\infty$, being $\{e_i\}_{i=1,2,3}$ the canonical basis of $\R^3$. 
	Since
$\{u_n\}$ is bounded in $L^2(\RT)$ and the functions $u_n$ are cylindrical symmetric, then, by \eqref{formulacentrata} $r_n$ must be bounded and so, for sufficiently large $R>0$, 
	\begin{equation}\label{eqsec4:vanish3}
	\int_{B_R(z_n^3  e_3)}|u_n(x)|^2\geq \delta
	\end{equation}
	where $z_n^3$ is the third component of $z_n$.
	Due also to \eqref{SW1}, the functional $\mathcal{J}$ is invariant with respect to $\Z$-translations in the $x_3$-axis and so the sequence $\{u_n(\cdot+z_n^3 e_3)\}$, that we denote again $\{u_n\}$, is still a minimizing sequence.
	Thus, in view of \eqref{eqsec4:vanish3}, we get that, up to a subsequence, $u_n\rightharpoonup u_0\neq 0$ in $\hat H^1_\sharp$.
\\
	As a second step, we show that $(u_0, \phi_{u_0}, {\bf A}_{u_0})$ is a solution of \eqref{P}. 
	\\
	Since $\mathcal{N}$ is a natural constraint, by \cite[Theorem 8.5]{Willem} we have that $\{u_n\}$ is a Palais-Smale sequence for $\mathcal{J}$ ((PS) sequence, for short) and $\mathcal{J}' (u_n) [v] \to 0$ for any $v\in \hat H_\sharp^1$. 
	Then, 
	\begin{align}
	&\partial_u I(u_n, \phi_n, {\bf A}_n) [v] \to 0,
	\hbox{ for any } v\in (C_0^\infty(\RT\setminus \Sigma))_\sharp,
	\label{139a}
	\\
	&\partial_\phi I(u_n, \phi_n, {\bf A}_n) [w] = 0,
	\hbox{ for any } w\in C_0^\infty(\RT),
	\label{140a}
	\\
	&\partial_{\bf A} I(u_n, \phi_n, {\bf A}_n)  [{\bf V}] = 0
	\hbox{ for any } {\bf V}\in \mathcal{A}_0^\infty,
	\label{141a}
	\end{align}
	where $\phi_n=\phi_{u_n}$.
	Since $\{u_n\}$ is bounded in $\hat H^1$, by Lemma \ref{le:phipsi}, Lemma \ref{le:A} and \eqref{eq:mixterm3},
	also $\{\phi_n\}$ and $\{{\bf A}_n\}$ are bounded respectively in $H_\mu$ and $(H_\mu)^3$. Thus	there exist $\phi_0\in H_\mu$ and ${\bf A}_0\in (H_\mu)^3$ such that, up to a subsequence,
	\begin{align*}
	&\phi_n \rightharpoonup \phi_0
	\hbox{ weakly in } H_\mu,\\
	&{\bf A}_n\rightharpoonup{\bf A}_0
	\hbox{ weakly in } (H_\mu)^3.
	\end{align*}
Arguing as in \cite[Lemma 2.7]{AP}, the weak convergence of $\{u_n\}$, \eqref{140a} and \eqref{141a} imply
\begin{align*}
	\partial_\phi I(u_0, \phi_0, {\bf A}_0) [w] &= 0 \hbox{ for any } w\in C_0^\infty(\RT),
\\
	\partial_{\bf A} I(u_0, \phi_0, {\bf A}_0)  [{\bf V}] &= 0 \hbox{ for any } {\bf V}\in \mathcal{A}_0^\infty,
\end{align*}
	and so, by the uniqueness results in Section \ref{se:vs}, $\phi_0=\phi_{u_0}$ and ${\bf A}_0={\bf A}_{u_0}$.
Moreover, by \eqref{139a}, 
we get
\[
	\partial_u I(u_0, \phi_0, {\bf A}_0) [v] = 0
	\hbox{ for any } v\in (C_0^\infty(\RT\setminus \Sigma))_\sharp.
	\]
	Hence, 
	\begin{align*}
	&\partial_u J(u_0, {\bf A}_{u_0}) [v]=\partial_u I(u_0, \phi_0, {\bf A}_0) [v] = 0
	\hbox{ for any } v\in (C_0^\infty(\RT\setminus \Sigma))_\sharp,
	\\
	&\partial_{\bf A} J(u_0, {\bf A}_{u_0}) [{\bf V}]=\partial_{\bf A} I(u_0, \phi_0, {\bf A}_0)  [{\bf V}] = 0
	\hbox{ for any } {\bf V}\in \mathcal{A}_0^\infty,
	\end{align*}
	and so, by Lemma \ref{eq:lemNaturalConstr}, we have that $(u_0,\phi_0, {\bf A}_0)$ is a solution for \eqref{P} and so $u_0\in\mathcal{N}$.\\
	Finally, to prove that $u_0$ is a ground state, we observe that
	\begin{align*}
	\mathcal{J}|_\mathcal{N}(u)
	&=
	\frac{\sigma-2}{2\sigma} \|\nabla u\|_2^2
	+ \frac{\sigma-2}{2\sigma} (m^2-\omega^2) \|u\|_2^2
	+\frac{\sigma-4}{2\sigma}q\omega \int \phi_u u^2
	+ \frac{q^2}{\sigma} \int \phi_u^2 u^2
	\\
	&\quad
	+ \frac{\sigma-2}{2\sigma}
	\left[
	\|\nabla\times{\bf A}_u\|_2^2
	+ \mu^2 \|{\bf A}_u\|_2^2 
	+\int |\ell\nabla\theta- q{\bf A}_u|^2 u^2\right]
	+\frac{1}{\sigma} \left[ \|\nabla\times{\bf A}_u\|_2^2 + \mu^2 \|{\bf A}_u\|_2^2 \right]
	\\
	&\quad
	+ \frac{1}{\sigma} \int [f(x,u) u - \sigma F(x,u)]
	\end{align*}
	and then, arguing as at the end of the proof of Lemma \ref{le:bbelow}, by Fatou Lemma and the weak convergences, we can conclude that
	\[
	\inf_\mathcal{N} \mathcal{J}(u) \leq \mathcal{J}(u_0) \leq \liminf_n \mathcal{J}(u_n) = \inf_\mathcal{N} \mathcal{J}(u).
	\]
\end{proof}

\section{A Mountain Pass approach}\label{se:MP}
In this section we assume \eqref{SW1}--\eqref{SW3} and \eqref{SW5}. Note that, arguing as in Lemma \ref{lem:3_1}, we can show that $\cN$ is nonempty. However, it may be not of class $\cC^1$ and so the minimization technique from Section \ref{section:Th1} fails. Moreover, due to the nonlocal terms $\phi_u$ and $\ba_u$ it is not clear if for any $u\in \hat{H}^1_{\sharp}$  the map $\J$ attains its maximum on $\R^+u$ at an unique point $tu\in \cN$ with $t\geq 0$. This is a crucial property to prove that the Nehari manifold is a topological manifold homeomorphic to the unit sphere, where the minimization techniques can be performed in the spirit of \cite{SzulkinWeth}.

In order to overcome the above difficulties we introduce a larger constraint $\cM$ 
and we intend to apply the following variant of the Mountain Pass Theorem.
\begin{Lem}\label{lemMPT}
Let $X$, $Y$ be Banach spaces, $J:X\times Y\to\R$ is of class $\cC^1$ and
\begin{equation*}
\cM:=\{(u,v)\in (X\setminus\{0\})\times (Y\setminus\{0\})|\; \partial_u J(u,v)[u]=0,\;\partial_v J(u,v)[v]=0\}\neq\emptyset.
\end{equation*}
Let us assume that $J$ satisfies the following
\begin{enumerate}[label=(J\arabic*),ref=J\arabic*]
\item \label{J1}there is $\rho>0$ such that 
$$\inf_{\|(u,v)\|_{X\times Y}=\rho}J>0=J(0,0);$$
\item \label{J2}for any $(u,v)\in \cM$ there is $T>0$ such that $\|(Tu,v)\|_{X\times Y}>\rho$ and $J(Tu,v)<0$;
\item \label{J3}if $(u,v)\in\cM$ then
$$J(u,v)\geq \max_{(t,s)\in[0,+\infty)\times[0,1]}\{J(tu,v),J(0,sv)\}.$$
\end{enumerate}
Then, if
$$\Gamma:=\left\{\gamma\in \cC([0,1],X\times Y) : \gamma(0)=(0,0), J(\gamma(1)) < 0, \|\gamma(1)\|_{X\times Y}>\rho \right\},$$
we have that
\begin{equation}\label{MPTlevel}
0<c:= \inf_{\gamma\in\Gamma} \max_{t\in [0,1]} J(\gamma(t))\leq \inf_{\mathcal{M}} J
\end{equation}
and there is a (PS) sequence $\{(u_n,v_n)\}$ at level $c$.
\end{Lem}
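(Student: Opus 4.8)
The plan is to verify that the abstract Mountain Pass geometry holds for the functional $J$ restricted to paths from $(0,0)$, and then to show that every element of $\cM$ can be connected to a point below level $0$ by a path whose maximal $J$-value does not exceed $J$ on that element. First I would observe that \eqref{J1} is exactly the Mountain Pass assumption at the origin: there is a sphere of radius $\rho$ on which $J$ is uniformly positive while $J(0,0)=0$. Since by \eqref{J2} the set $\Gamma$ is nonempty (given any $(u,v)\in\cM$, the straight segment from $(0,0)$ to $(Tu,v)$, reparametrized on $[0,1]$, is an admissible path: its endpoint has norm $>\rho$ and negative $J$-value), the minimax quantity $c$ in \eqref{MPTlevel} is well-defined. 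The classical Mountain Pass Theorem of Ambrosetti--Rabinowitz (in the form giving a $(PS)_c$ sequence without assuming the Palais--Smale condition, see e.g. \cite{Willem}) applies directly: every $\gamma\in\Gamma$ starts at $(0,0)$ and ends outside the ball $B_\rho$ with $J<0$ there, hence crosses the sphere $\|(u,v)\|_{X\times Y}=\rho$, so $\max_{t}J(\gamma(t))\geq \inf_{\|(u,v)\|=\rho}J>0$; taking the infimum over $\gamma$ gives $c>0$, and the deformation lemma yields a sequence $\{(u_n,v_n)\}$ with $J(u_n,v_n)\to c$ and $J'(u_n,v_n)\to 0$.

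It remains to prove the inequality $c\leq \inf_{\cM}J$. Fix $(u,v)\in\cM$. Using \eqref{J2}, pick $T>0$ with $\|(Tu,v)\|_{X\times Y}>\rho$ and $J(Tu,v)<0$, and define the path $\gamma_0$ that goes first along $s\mapsto (0,sv)$ for $s\in[0,1]$ and then along $t\mapsto (tu,v)$ for $t\in[0,T]$, suitably reparametrized on $[0,1]$ so that $\gamma_0(0)=(0,0)$ and $\gamma_0(1)=(Tu,v)$. This $\gamma_0$ is continuous, starts at the origin, and ends at a point with $J<0$ and norm exceeding $\rho$, so $\gamma_0\in\Gamma$. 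By construction,
\[
\max_{t\in[0,1]}J(\gamma_0(t))
=\max\Big\{\max_{s\in[0,1]}J(0,sv),\ \max_{t\in[0,T]}J(tu,v)\Big\}
\leq \max_{(t,s)\in[0,+\infty)\times[0,1]}\{J(tu,v),J(0,sv)\}.
\]
By hypothesis \eqref{J3} the right-hand side is $\leq J(u,v)$. Hence $c\leq \max_{t}J(\gamma_0(t))\leq J(u,v)$, and since $(u,v)\in\cM$ was arbitrary, $c\leq \inf_{\cM}J$.

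The only delicate point is the construction of the path $\gamma_0$ and checking that its maximum really decomposes as a maximum over the two one-parameter families appearing in \eqref{J3}: one must make sure the reparametrization is continuous at the junction $(0,v)$ (where the two pieces meet) and that no extra values of $J$ are picked up along the way; but since $\gamma_0$ traces out exactly the set $\{(0,sv):s\in[0,1]\}\cup\{(tu,v):t\in[0,T]\}$, its $J$-image is exactly the union of the two corresponding value sets, and the claimed bound is immediate. Everything else is the standard Mountain Pass machinery, which I would simply cite.
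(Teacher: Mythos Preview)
Your proof is correct and follows essentially the same approach as the paper: invoke the classical Mountain Pass Theorem to get $c>0$ and a (PS) sequence, then for each $(u,v)\in\cM$ build the two-piece path $(0,0)\to(0,v)\to(Tu,v)$ and use (\ref{J3}) to bound its maximal value by $J(u,v)$. The paper writes the reparametrization explicitly as $\gamma(t)=(0,2tv)$ on $[0,1/2]$ and $\gamma(t)=(T(2t-1)u,v)$ on $[1/2,1]$, which is exactly the path you describe.
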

\begin{proof}
Since $J$ satisfies the assumptions of the classical Mountain Pass Theorem, then $c>0$ and there is a (PS) sequence $(u_n,v_n)$ at level $c$.
Let $(u,v)\in\cM$ and take $T>0$ such that $\|(Tu,v)\|_{X\times Y}>\rho$ and $J(Tu,v)<0$.  Let us consider a path
\[
\gamma(t)
=
\begin{cases}
(0,2tv) 
& 0\leq t\leq 1/2, \\
(T(2t-1)u,v)
& 1/2  \leq t\leq 1.\\
\end{cases}
\]
Then $\gamma\in\Gamma$ and by (\ref{J3})
$$c\leq \max_{t\in [0,1]}J(\gamma(t))\leq J(u,v).$$
Therefore  $c\leq \inf_{\mathcal{M}} J$.
\end{proof}

We take $X:=\hat{H}^1_\sharp$,  $Y:=\mathcal{A}$ and below we show that in our case, the functional $J$ defined in \eqref{defJ} satisfies (\ref{J1})--(\ref{J3}) in Lemma \ref{lemMPT} for
\[
\mathcal{M}=\{(u,\ba)\in (X\setminus\{0\})\times (Y\setminus\{0\})|\; \partial_u J(u,\ba)[u]=0,\;\partial_\ba J(u,\ba)[\ba]=0\}
\]
where
\begin{align*}
	\partial_u J(u,{\bf A})[u]
	&=
	\|\nabla u\|_2^2+m^2\| u\|_2^2  + \int |\ell\nabla\theta 
	- q {\bf A}|^2 u^2 - \int (\omega-q\phi_u)^2 u^2 - \int f(x,u)u,\\
	\partial_{\bf A} J(u,{\bf A})[{\bf A}] 
	&=
	\|\nabla\times{\bf A}\|_2^2 +\mu^2\|{\bf A}\|_2^2 +q^2\int |{\bf A}|^2 u^2
	-q\ell\int \nabla\theta\cdot{\bf A} u^2.
\end{align*}
Observe that if $u \in \cN$, then $(u,\ba_u)\in\cM$; hence, arguing as in Lemma \ref{lem:3_1}, $\cM$ is nonempty as well.

\begin{Lem}\label{lem:J1_J2}
The functional $J$ satisfies (\ref{J1}) and (\ref{J2}).
\end{Lem}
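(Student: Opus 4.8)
The plan is to verify the two geometric conditions of the Mountain Pass Theorem directly from the structure of $J$ and the estimates already collected in Section~\ref{se:vs}. For \eqref{J1}, I would start from the explicit expression of $J$ in \eqref{defJ} and bound it from below near the origin. Using Lemma~\ref{le:phipsi}, the term $\frac{q\omega}{2}\int\phi_u u^2$ is nonnegative and may simply be dropped, while by \eqref{Auineq} (equivalently \eqref{comevuoi}) we have $\frac12\int|\ell\nabla\theta-q\ba|^2u^2+\frac12\|\nabla\times\ba\|_2^2+\frac{\mu^2}{2}\|\ba\|_2^2\geq 0$ only after combining with the $\ell^2\int u^2/r^2$ term, so it is cleaner to write $J(u,\ba)\geq \frac12\|\nabla u\|_2^2+\frac{m^2-\omega^2}{2}\|u\|_2^2+\frac12\|\nabla\times\ba\|_2^2+\frac{\mu^2}{2}\|\ba\|_2^2 - (\text{cross term}) - \int F(x,u)$, then use $2\ell q\int\nabla\theta\cdot\ba\,u^2 \le \varepsilon\ell^2\int u^2/r^2 + C_\varepsilon q^2\int|\ba|^2u^2 \le \varepsilon\|u\|^2 + C_\varepsilon\|\ba\|_6^2\|u\|_3^2$ (Cauchy--Schwarz, Hölder, Sobolev, exactly as in \eqref{eq:mixterm3}), absorbing the $\|u\|_3^2$ factor into a small power of the norm. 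Together with \eqref{eq:estFn} to control $\int F(x,u)\leq \varepsilon\|u\|_2^2 + C_\varepsilon\|u\|_p^p$ and the Sobolev embedding $\hat H^1_\sharp\hookrightarrow L^p$, $2<p<6$, this gives $J(u,\ba)\geq c_1(\|u\|^2+\|\ba\|^2) - c_2(\|u\|^2+\|\ba\|^2)^{(p/2)} - c_3(\|u\|^2+\|\ba\|^2)^{3/2}$, which is strictly positive on a small sphere $\|(u,\ba)\|_{X\times Y}=\rho$; and clearly $J(0,0)=0$.

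For \eqref{J2}, fix $(u,\ba)\in\mathcal{M}$, so in particular $u\neq 0$. I would examine $t\mapsto J(tu,\ba)$. The quadratic-in-$t$ part of $J(tu,\ba)$ is $\frac{t^2}{2}\big(\|\nabla u\|_2^2+(m^2-\omega^2)\|u\|_2^2+\ell^2\int u^2/r^2\big)$ plus the fixed $\ba$-terms plus the nonlocal pieces; using $\phi_{tu}\leq\omega/q$ from \eqref{bohhh} the electric contribution $\frac{q\omega}{2}t^2\int\phi_{tu}u^2$ grows at most like $t^2$, the magnetic cross term $-\frac{\ell q}{2}\int\nabla\theta\cdot\ba\,t^2 u^2$ is $O(t^2)$, while by \eqref{eq:estimF2n} (the assumption is \eqref{SW5} here, so $\sigma=4$) we have $\int F(x,tu)\geq t^4\int F(x,u)$ with $\int F(x,u)>0$ by \eqref{SW3}. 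Hence $J(tu,\ba)\leq C t^2 - t^4\int F(x,u)\to-\infty$ as $t\to+\infty$, so there is $T>0$ with $J(Tu,\ba)<0$ and, enlarging $T$ if necessary, $\|(Tu,\ba)\|_{X\times Y}>\rho$.

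The main obstacle is the sign and size control of the indefinite and nonlocal terms: the electric self-interaction enters $J$ with a favorable sign (it is $+\frac{q\omega}{2}\int\phi_u u^2\ge 0$ after the reduction), but the magnetic cross term $-\frac{\ell q}{2}\int\nabla\theta\cdot\ba\,u^2$ and the raw potential energy $-\frac12\int(\omega-q\phi_u)^2u^2$ have the wrong sign and must be tamed by \eqref{bohhh}, \eqref{124}, \eqref{Auineq}, and the weighted Hardy-type norm hidden in $\|\cdot\|$ on $\hat H^1_\sharp$; getting a clean coercive lower bound near zero that survives these subtractions — in particular handling the $\int|\ba|^2u^2$ versus $\ell^2\int u^2/r^2$ balance uniformly in $\mu\in\mathbb{R}$ — is the delicate point. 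Once that algebra is arranged, both \eqref{J1} and \eqref{J2} follow by the Sobolev inequality and the superquadratic growth $t^4$ of $\int F$, respectively.
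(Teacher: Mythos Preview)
Your approach is essentially the paper's own. Two small corrections: \eqref{Auineq} and \eqref{comevuoi} hold only for $\ba=\ba_u$, not for a generic $\ba\in\mathcal{A}$, so they play no role here --- you rightly abandon them and switch to the direct expansion of $|\ell\nabla\theta-q\ba|^2$, which is exactly what the paper does; and the higher-order term coming from $\|\ba\|_6^2\|u\|_3^2$ has exponent $2$ (e.g.\ via Young, $\|\ba\|_6^2\|u\|_3^2\le \tfrac12\|\ba\|_6^4+\tfrac12\|u\|_3^4\le C(\|\nabla\times\ba\|_2^2+\mu^2\|\ba\|_2^2)^2+C\|u\|^4$), not $3/2$, though this is immaterial for \eqref{J1}. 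For \eqref{J2} your argument matches the paper's; just remember that the fixed $\ba$-terms contribute an additive constant, so the correct upper bound is $J(tu,\ba)\le Ct^2+C'-t^4\int F(x,u)\to-\infty$.
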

\begin{proof}
Observe that there exists $C>0$ such that for all $u\in \hat{H}^1_\sharp$ and $\ba\in \mathcal{A}$
\begin{equation*}
\begin{split}
\int |\ell\nabla\theta- q{\bf A}|^2 u^2 
&\geq
\int\left(\frac{\ell^2}{r^2} - 2\frac{q\ell}{r}|{\bf A}|+q^2|{\bf A}|^2\right) u^2
\\
&\geq
\int\left(\frac{\ell^2}{2r^2} - q^2|{\bf A}|^2\right) u^2
\\
&\geq
\frac{\ell^2}{2}\int\frac{u^2}{r^2}  - q^2\|{\bf A}\|_6^2 \|u\|_3^2
\\
&\geq
\frac{\ell^2}{2}\int\frac{u^2}{r^2}  - \frac{1}{2} q^4\|{\bf A}\|_6^4 - \frac{1}{2} \|u\|_3^4
\\
&\geq
\frac{\ell^2}{2} \int \frac{u^2}{r^2} - C [(\|\nabla\times {\bf A}\|_2^2+\mu^2\|{\bf A}\|_2^2)^2 + \|u\|^4].
\end{split}
\end{equation*}
Thus, by \eqref{eq:estFn}, 
\begin{align*}
J(u,{\bf A})
&\geq
\frac{1}{2} \|\nabla u\|_2^2
+\frac{m^2-\omega^2-\eps}{2}\|u\|_2^2
+ \frac{1}{2}\|\nabla\times{\bf A}\|_2^2
+\frac{\mu^2}{2}\|{\bf A}\|_2^2
+ \frac{\ell^2}{4} \int \frac{u^2}{r^2} - C (\|\nabla\times {\bf A}\|_2^2+\mu^2\|{\bf A}\|_2^2)^2
\\
&\quad - C \|u\|^4
-C_\eps \|u\|_p^p
\\
&\geq
C \| u \|^2(1- \|u\|^2 - \|u\|^{p-2}) +C (\|\nabla \times {\bf A}\|_2^2+\mu^2\|{\bf A}\|_2^2)\Big[1-(\|\nabla\times {\bf A}\|_2^2+\mu^2\|{\bf A}\|_2^2)\Big],
\end{align*}
from which we have (\ref{J1}).\\
Finally take $(u,{\bf A})\in V$ with $u\neq 0$ and note that by \eqref{bohhh}, for any $t>0$,
\begin{align*}
J(tu,{\bf A})
&=
\frac{t^2}{2} \|\nabla u\|_2^2
+\frac{m^2-\omega^2}{2}t^2\|u\|_2^2
+\frac{q\omega}{2}t^2\int \phi_{tu} u^2
+ \frac{\ell^2}{2} t^2\int \frac{u^2}{r^2}
+\frac{1}{2}\|\nabla\times{\bf A}\|_2^2
+\frac{\mu^2}{2}\|{\bf A}\|_2^2\\
&\quad+ \frac{t^2}{2} \int |\ell\nabla\theta- q{\bf A}|^2 u^2
- \int F(x,tu)\\
&\leq 
 t^2\left(\frac{1}{2} \|\nabla u\|_2^2
+\frac{m^2}{2}\|u\|_2^2
+ \frac{\ell^2}{2} \int \frac{u^2}{r^2}
+\frac{1}{2} \int |\ell\nabla\theta- q{\bf A}|^2 u^2
- \int \frac{F(x,tu)}{t^2}\right)\\
&\quad +\frac{1}{2}\|\nabla\times{\bf A}\|_2^2
+\frac{\mu^2}{2}\|{\bf A}\|_2^2.
\end{align*}
In view of 
\eqref{eq:estimF2n} we have that
\[
\int \frac{F(x,tu)}{t^2}\geq t^2\int F(x,u) 
\] 
and, by (\ref{SW3}), we can conclude that for $t$ large enough, $J(tu,{\bf A})<0$.
\end{proof}

\begin{Lem}
The functional $J$ satisfies (\ref{J3}). 
\end{Lem}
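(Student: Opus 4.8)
The plan is to fix $(u,\ba)\in\cM$ and estimate $J(u,\ba)$ from below by separately controlling the two quantities $J(tu,\ba)$ for $t\in[0,+\infty)$ and $J(0,s\ba)$ for $s\in[0,1]$. Both parts will rely on the definition of $\cM$, i.e.\ on the two identities $\partial_u J(u,\ba)[u]=0$ and $\partial_\ba J(u,\ba)[\ba]=0$, together with the monotonicity consequences of \eqref{SW5} collected in Lemma~\ref{Le29} (namely \eqref{eq:estimF1n} and \eqref{eq:estimF2n}), the sign/bound information on $\phi_u$ from \eqref{bohhh} and \eqref{cacca2}, and the elementary quadratic estimate on the $\ba$-terms already used in the proof of Lemma~\ref{lem:J1_J2}.

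First I would treat $t\mapsto J(tu,\ba)$. Writing out $J(tu,\ba)-J(u,\ba)$ and using $\partial_u J(u,\ba)[u]=0$ to eliminate $\|\nabla u\|_2^2$, the purely quadratic-in-$t$ terms combine with a factor $\tfrac{t^2-1}{2}$ against $\int f(x,u)u$ plus the correction coming from the nonlocal term $\phi_{tu}$; here the key point is that, by \eqref{bohhh} and the monotonicity of $u\mapsto\int\phi_u u^2$ (or directly by comparing $K(\phi_{tu})$ with $K(\phi_u)$ as a function of $t$), the term $\tfrac{q\omega}{2}t^2\int\phi_{tu}u^2$ is dominated in the way needed. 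For the nonlinearity one uses \eqref{eq:estimF2n}: $F(x,tu)\ge t^4 F(x,u)$ for $t\ge1$ and $F(x,tu)\le t^4 F(x,u)$ for $0\le t\le1$, so that the function
\[
t\longmapsto \frac{t^2-1}{2}\int f(x,u)u-\int\big(F(x,tu)-F(x,u)\big)
\]
is $\le 0$ on $[0,+\infty)$ once one substitutes $\int f(x,u)u\ge 4\int F(x,u)$ from \eqref{eq:estimF1n}; a one–variable calculus check of $g(t)=2(t^2-1)-(t^4-1)=-(t^2-1)^2$ closes this. This yields $J(tu,\ba)\le J(u,\ba)$ for all $t\ge 0$.

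Second I would treat $s\mapsto J(0,s\ba)$. Since $F(x,0)=0$ and $\phi_0=0$, we have $J(0,s\ba)=\tfrac{s^2}{2}\big(\|\nabla\times\ba\|_2^2+\mu^2\|\ba\|_2^2\big)$, which is nondecreasing in $s\ge 0$, so $\max_{s\in[0,1]}J(0,s\ba)=\tfrac12\big(\|\nabla\times\ba\|_2^2+\mu^2\|\ba\|_2^2\big)=J(0,\ba)$. It then remains to show $J(u,\ba)\ge J(0,\ba)$, equivalently that $J(u,\ba)-J(0,\ba)\ge0$; using $\partial_u J(u,\ba)[u]=0$ to replace $\|\nabla u\|_2^2$, the difference becomes $\tfrac{1}{2}\int\big(m^2-(\omega-q\phi_u)^2\big)u^2+\cdots$ of exactly the shape estimated at the end of the proof of Lemma~\ref{le:bbelow}, and is nonnegative by \eqref{bohhh}, \eqref{eq:estimF1n}, and the nonnegativity of the $\ba_u$-type terms. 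Combining the two parts gives \eqref{J3}.

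The main obstacle I anticipate is handling the nonlocal term $\phi_{tu}$ along the ray $t\mapsto J(tu,\ba)$: unlike in the $\ba$-free reduction, $\ba$ is frozen here while $\phi$ is not, so one must argue that $\tfrac{q\omega}{2}t^2\int\phi_{tu}u^2$ behaves monotonically (or at least is bounded above by its value at a convenient point) — this is where \eqref{bohhh} and the variational characterization of $\phi_u$ via $K$ do the work, and it is the step requiring care rather than the nonlinearity estimates, which are routine once \eqref{eq:estimF1n}–\eqref{eq:estimF2n} are in hand.
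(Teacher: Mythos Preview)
Your two-part structure is right, and the second part is actually easier than you make it: once $J(tu,{\bf A})\le J(u,{\bf A})$ for all $t\ge 0$ is established, setting $t=0$ gives $J(0,{\bf A})\le J(u,{\bf A})$ immediately, and then $J(0,s{\bf A})\le J(0,{\bf A})$ for $s\in[0,1]$ is trivial. There is no need to invoke Lemma~\ref{le:bbelow} (which concerns $\mathcal{J}$ and ${\bf A}_u$, not $J$ with a frozen ${\bf A}$).

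The first part, however, has a real gap. First a minor point: your nonlinearity estimate does not close as written, since to bound $\tfrac{t^2-1}{2}\int f(x,u)u-\int(F(x,tu)-F(x,u))$ from above for $t>1$ via $F(x,tu)\ge t^4F(x,u)$ you would need $\int f(x,u)u\le 4\int F(x,u)$, whereas \eqref{eq:estimF1n} gives the reverse inequality. More importantly, your treatment of $\phi_{tu}$ is where the argument breaks down: neither \eqref{bohhh} alone nor ``comparing $K(\phi_{tu})$ with $K(\phi_u)$'' (the functional $K$ itself depends on $tu$) yields the required control. The paper does not estimate $J(tu,{\bf A})-J(u,{\bf A})$ directly; it differentiates. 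Writing $j(t)=J(tu,{\bf A})$, one factors $j'(t)=t^3\bar j(t)$ with
\[
\bar j(t)=\frac{1}{t^2}\Big(\|\nabla u\|_2^2+(m^2-\omega^2)\|u\|_2^2+\int|\ell\nabla\theta-q{\bf A}|^2u^2\Big)+2q\omega\int\frac{\phi_{tu}}{t^2}u^2-q^2\int\frac{\phi_{tu}^2}{t^2}u^2-\int\frac{f(x,tu)}{t^3}u.
\]
The first block and the last term are nonincreasing in $t$ by inspection and by (\ref{SW5}) respectively. For the $\phi$-block one uses $\tfrac{d}{dt}\phi_{tu}=\tfrac{2}{t}\psi_{tu}$ together with the identity \eqref{cacca2} and the sign bounds \eqref{cc3} to compute
\[
\frac{d}{dt}\Big(2\omega\int\frac{\phi_{tu}}{t^2}u^2-q\int\frac{\phi_{tu}^2}{t^2}u^2\Big)=-\frac{2q}{t^3}\int(\phi_{tu}^2+2\phi_{tu}\psi_{tu})u^2\le 0.
\]
Hence $\bar j$ is nonincreasing; since $\bar j(1)=0$ because $(u,{\bf A})\in\cM$, one gets $j'\ge 0$ on $(0,1]$ and $j'\le 0$ on $[1,\infty)$, i.e.\ $J(u,{\bf A})=\max_{t\ge 0}J(tu,{\bf A})$. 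The use of $\psi_{tu}$ through \eqref{cacca2}--\eqref{cc3} is precisely the ingredient your sketch is missing.
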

\begin{proof}
Let us assume $(u,{\bf A})\in\cM$ and let
$$j(t)
=
J(tu,{\bf A}).$$
Observe that, if $t>0$
\begin{align*}
j'(t)
&=
\partial_u J(tu,{\bf A})[u]\\
&=
t \|\nabla u\|_2^2
+t(m^2-\omega^2) \|u\|_2^2
+ t\int |\ell\nabla\theta- q{\bf A}|^2 u^2
+2q\omega t \int \phi_{tu} u^2
-q^2 t \int \phi_{tu}^2 u^2
- \int f(x,tu)u\\
&=
t^3
\underbrace{\left[
\frac{1}{t^2}
\left(
\|\nabla u\|_2^2
+(m^2-\omega^2) \|u\|_2^2
+ \int |\ell\nabla\theta- q{\bf A}|^2 u^2
\right)
+2q\omega \int \frac{\phi_{tu}}{t^2} u^2
-q^2 \int \frac{\phi_{tu}^2}{t^2} u^2
- \int \frac{f(x,tu)}{t^3}u
\right]}_{\bar{j}(t)}
\end{align*}
The function $\bar{j}$ is nonincreasing by (\ref{SW5}) and since by \eqref{cacca2} and \eqref{cc3}
we have
\begin{align*}
\frac{d}{dt}\left(
2\omega \int \frac{\phi_{tu}}{t^2} u^2
-q \int \frac{\phi_{tu}^2}{t^2} u^2\right)
&=
\int \frac{4\omega\psi_{tu}-4q\phi_{tu}\psi_{tu} -4\omega\phi_{tu}+2q\phi_{tu}^2}{t^3}u^2\\
&=
-2q\int \frac{\phi_{tu}^2+2\phi_{tu}\psi_{tu}}{t^3}u^2\leq 0.
\end{align*} 
Moreover, since $(u,\ba)\in \cM$, $\bar{j}(1)=0$ and so we obtain that
\begin{equation}\label{scemochilegge}
J(u,{\bf A})\geq J(tu,{\bf A}) \hbox{ for any } t\geq 0.
\end{equation}
Finally, 
since $(u,{\bf A})\in\cM$ and by \eqref{scemochilegge}, for any $s\in [0,1]$, 
$$J(0,s{\bf A})\leq J(0,{\bf A})\le J(u,\ba).$$
\end{proof}

Now we prove the following results that will be useful to get Theorem \ref{ThMain2}.

\begin{Lem}\label{lemPSbounded}
Every (PS) sequence for the functional $J$ is bounded.
\end{Lem}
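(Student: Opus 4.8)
The plan is to consider a (PS) sequence $\{(u_n,{\bf A}_n)\}$ for $J$, so that $J(u_n,{\bf A}_n)\to c$ and $J'(u_n,{\bf A}_n)\to 0$ in $(\hat H^1_\sharp\times\mathcal{A})'$, and to extract from these two facts a bound on $\|u_n\|$ and on $\|{\bf A}_n\|_{\mathrm{curl},\mu}:=(\|\nabla\times{\bf A}_n\|_2^2+\mu^2\|{\bf A}_n\|_2^2)^{1/2}$. The natural combination to form is $J(u_n,{\bf A}_n)-\frac14 J'(u_n,{\bf A}_n)[(u_n,{\bf A}_n)]$, exactly as one does under a $4$-superlinearity hypothesis; here \eqref{SW5} (equivalently \eqref{eq:estimF1n}, which gives $sf(x,s)\ge 4F(x,s)\ge0$) is the replacement for an Ambrosetti--Rabinowitz condition. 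Using the explicit formulas for $\partial_u J(u,{\bf A})[u]$ and $\partial_{\bf A} J(u,{\bf A})[{\bf A}]$ recalled just above Lemma \ref{lem:J1_J2}, together with the identities \eqref{bohh}--\eqref{cc3} for $\phi_u$ and the fact that $\|\nabla\times{\bf A}_n\|_2=\|\nabla{\bf A}_n\|_2$ on $\mathcal{A}$ (Lemma \ref{LemA}), I would show that this combination controls a positive multiple of $\|\nabla u_n\|_2^2+(m^2-\omega^2)\|u_n\|_2^2+\ell^2\int u_n^2/r^2+\|\nabla\times{\bf A}_n\|_2^2+\mu^2\|{\bf A}_n\|_2^2$ plus a nonnegative remainder involving $\int(\omega-q\phi_{u_n})^2u_n^2$-type terms and $\int[f(x,u_n)u_n-4F(x,u_n)]\ge0$. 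Concretely one gets, for suitable constants,
\[
c+o(1)+o(1)\big(\|u_n\|+\|{\bf A}_n\|_{\mathrm{curl},\mu}\big)\;\ge\; C\big(\|u_n\|^2+\|\nabla\times{\bf A}_n\|_2^2+\mu^2\|{\bf A}_n\|_2^2\big),
\]
which immediately yields boundedness of $\{u_n\}$ in $\hat H^1_\sharp$ and of $\{{\bf A}_n\}$ in $(H_\mu)^3$.

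More carefully, the $u$-part of the computation is: $J(u_n,{\bf A}_n)-\tfrac14\partial_uJ(u_n,{\bf A}_n)[u_n]-\tfrac14\partial_{\bf A}J(u_n,{\bf A}_n)[{\bf A}_n]$ equals $\tfrac14\|\nabla u_n\|_2^2+\tfrac14 m^2\|u_n\|_2^2+\tfrac14\int|\ell\nabla\theta-q{\bf A}_n|^2u_n^2-\tfrac14\int(\omega-q\phi_{u_n})^2u_n^2$ from the $u$-terms (here the $\tfrac12-\tfrac14$ structure kills exactly the quadratic-in-$u$ coefficients down to $\tfrac14$), plus $\tfrac14\|\nabla\times{\bf A}_n\|_2^2+\tfrac14\mu^2\|{\bf A}_n\|_2^2$ from the ${\bf A}$-terms (the $\tfrac12\|\nabla\times{\bf A}_n\|_2^2$ and $\tfrac12\mu^2\|{\bf A}_n\|_2^2$ in $J$ minus $\tfrac14$ of themselves from $\partial_{\bf A}J[{\bf A}_n]$; note the $q^2\int|{\bf A}_n|^2u_n^2$ and $-q\ell\int\nabla\theta\cdot{\bf A}_nu_n^2$ pieces cancel against the corresponding pieces of $\tfrac14\partial_uJ[u_n]$ and of $\tfrac14\int|\ell\nabla\theta-q{\bf A}_n|^2u_n^2$ after expanding $|\ell\nabla\theta-q{\bf A}_n|^2=\ell^2/r^2-2q\ell\nabla\theta\cdot{\bf A}_n+q^2|{\bf A}_n|^2$), plus $\tfrac14\int[f(x,u_n)u_n-4F(x,u_n)]$ from the nonlinearity, minus the extra contribution of $\phi$ inside $J$ which one handles via \eqref{bohh} and $0\le\phi_{u_n}\le\omega/q$. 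Then one uses $\int|\ell\nabla\theta-q{\bf A}_n|^2u_n^2-\int(\omega-q\phi_{u_n})^2u_n^2\ge -\omega^2\|u_n\|_2^2+\ell^2\int u_n^2/r^2-2q\ell\int\nabla\theta\cdot{\bf A}_nu_n^2+q^2\int|{\bf A}_n|^2u_n^2$ and absorbs the cross term $2q\ell|\int\nabla\theta\cdot{\bf A}_nu_n^2|\le \tfrac{\ell^2}{2}\int u_n^2/r^2+Cq^2\int|{\bf A}_n|^2u_n^2$ followed by Sobolev as in \eqref{eq:mixterm3}; since $0<\omega^2<m^2$, the coefficient $m^2-\omega^2$ stays strictly positive. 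Writing everything out, the left-hand side is $c+o(1)+o(\|(u_n,{\bf A}_n)\|)$ and the right-hand side dominates $C(\|u_n\|^2+\|{\bf A}_n\|_{\mathrm{curl},\mu}^2)$, whence both sequences are bounded.

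The main obstacle I anticipate is the bookkeeping around the nonlocal potentials: unlike the Klein-Gordon-Maxwell case with $\mu=0$ one cannot drop the $\phi$ and the $\mu^2\|{\bf A}\|_2^2$ terms, so one must check that the $4$-homogeneous test function $(u_n,{\bf A}_n)$ interacts correctly with $\phi_{u_n}$ (which is \emph{not} homogeneous in $u_n$) — this is precisely where the identity \eqref{bohh} together with the sign bound \eqref{bohhh} and the relation \eqref{cacca2} between $\phi_{u_n}$ and $\psi_{u_n}$ do the work, guaranteeing that the $\phi$-terms contribute with a favourable sign (nonpositive coefficient in front of something we are bounding, or absorbed into $m^2-\omega^2$). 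A minor secondary point is that $\int|{\bf A}_n|^2u_n^2$ is a priori only bounded once we know $\|{\bf A}_n\|_6$ and $\|u_n\|_3$ are bounded, so the cross-term estimate \eqref{eq:mixterm3} has to be invoked after the Sobolev embeddings $\hat H^1_\sharp\hookrightarrow L^6$ and $\mathcal{A}\subset(H_\mu)^3\hookrightarrow (L^6)^3$ — but all of this is routine once the algebraic cancellation above is in place. No compactness is needed here; this lemma is purely a coercivity-type estimate, and the extraction of a convergent subsequence and the identification of the limit as a ground state will be done afterwards.
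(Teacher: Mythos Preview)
Your overall strategy is the right one, but the algebra in the key combination is wrong and the error is not harmless. If you actually compute
\[
J(u_n,{\bf A}_n)-\tfrac14\partial_uJ(u_n,{\bf A}_n)[u_n]-\tfrac14\partial_{\bf A}J(u_n,{\bf A}_n)[{\bf A}_n],
\]
the ${\bf A}$-related pieces do \emph{not} cancel to leave $\tfrac14\int|\ell\nabla\theta-q{\bf A}_n|^2u_n^2+\tfrac14\|\nabla\times{\bf A}_n\|_2^2+\tfrac14\mu^2\|{\bf A}_n\|_2^2$ as you claim. After $J-\tfrac14\partial_uJ[u_n]$ you already have the full $\tfrac14\int|\ell\nabla\theta-q{\bf A}_n|^2u_n^2$; subtracting $\tfrac14\partial_{\bf A}J[{\bf A}_n]$ then contributes an additional $-\tfrac{q^2}{4}\int|{\bf A}_n|^2u_n^2+\tfrac{q\ell}{4}\int\nabla\theta\cdot{\bf A}_nu_n^2$ which does not vanish. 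The net ${\bf A}$-block is
\[
\tfrac14\|\nabla\times{\bf A}_n\|_2^2+\tfrac{\mu^2}{4}\|{\bf A}_n\|_2^2+\tfrac{\ell^2}{4}\int\frac{u_n^2}{r^2}-\tfrac{q\ell}{4}\int\nabla\theta\cdot{\bf A}_n\,u_n^2,
\]
and for a general ${\bf A}_n\in\mathcal{A}$ the last term has no sign. Your proposed absorption via Young plus \eqref{eq:mixterm3} produces a term $C\|{\bf A}_n\|_6^2\|u_n\|_3^2\le C\|{\bf A}_n\|_{\mathrm{curl},\mu}^2\|u_n\|_{H^1}^2$, which is quartic in the unknown norms and therefore cannot be absorbed into the quadratic coercive part; the one-step argument breaks down precisely here. (Note also that neither $\psi_u$ nor \eqref{cacca2} is needed anywhere: the $\phi$-terms already collapse to $\tfrac{q^2}{4}\int\phi_{u_n}^2u_n^2\ge 0$ without them.)

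The paper avoids this obstruction by testing only against $u_n$, i.e.\ forming $4J(u_n,{\bf A}_n)-\partial_uJ(u_n,{\bf A}_n)[u_n]$. Then the square $\int|\ell\nabla\theta-q{\bf A}_n|^2u_n^2$ stays intact and nonnegative, the $\phi$-terms collapse to $q^2\int\phi_{u_n}^2u_n^2\ge 0$, and one reads off directly
\[
\|\nabla u_n\|_2^2+(m^2-\omega^2)\|u_n\|_2^2+2\|\nabla\times{\bf A}_n\|_2^2+2\mu^2\|{\bf A}_n\|_2^2\le 4\beta+1+o_n(1)\|u_n\|_{H^1},
\]
which gives boundedness of $\{u_n\}$ in $H^1(\R^3)$ and of $\{{\bf A}_n\}$ in $(H_\mu)^3$ with no cross term at all. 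Only \emph{after} this step---once $\|{\bf A}_n\|_6$ and $\|u_n\|_3$ are known to be bounded---does one invoke the estimate of type \eqref{eq:mixterm3} (exactly as in the proof of Theorem~\ref{ThMain1}) to upgrade to boundedness of $\int u_n^2/r^2$, hence of the full $\hat H^1$-norm. The moral: do not test against ${\bf A}_n$; it destroys the good square $|\ell\nabla\theta-q{\bf A}_n|^2$ and buys you nothing, and the $\hat H^1$-control must come in a second step anyway.
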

\begin{proof}
Let $\{(u_n,{\bf A}_n)\}$ be a (PS) sequence for the functional $J$, i.e.
\[
J(u_n,{\bf A}_n)\to \beta 
\]
and
\begin{equation*}
dJ(u_n,{\bf A}_n)\to 0 \hbox{ in } V'
\end{equation*}
as $n\to+\infty$.
Using \eqref{eq:estimF1n}, we have that
\begin{align*}
4 J(u_n,{\bf A}_n)-\partial_u J(u_n,{\bf A}_n)[u_n]
&=
\|\nabla u_n\|_2^2 
+(m^2-\omega^2)\|u_n\|_2^2
+ 2\|\nabla\times{\bf A}_n\|_2^2
+ 2 \mu^2 \|{\bf A}_n\|_2^2\\
& \quad
+q^2 \int \phi_n^2 u_n^2
+ \int |\ell \nabla\theta-q{\bf A}_n|^2 u_n^2
+\int [f(x,u_n)u_n - 4 F(x,u_n)]\\
&\geq
\|\nabla u_n\|_2^2 
+(m^2-\omega^2)\|u_n\|_2^2
+ 2\|\nabla\times{\bf A}_n\|_2^2
+ 2 \mu^2 \|{\bf A}_n\|_2^2.
\end{align*}
On the other hand
\[
4 J(u_n,{\bf A}_n)-\partial_u J(u_n,{\bf A}_n)[u_n]
\leq
4\beta+1+o_n(1) \sqrt{\|\nabla u_n\|_2^2 +(m^2-\omega^2)\|u_n\|_2^2}.
\]
Then
\[
\|\nabla u_n\|_2^2 
+(m^2-\omega^2)\|u_n\|_2^2
+ 2\|\nabla\times{\bf A}_n\|_2^2
+ 2 \mu^2 \|{\bf A}_n\|_2^2
\leq
4\beta+1+o_n(1) \sqrt{\|\nabla u_n\|_2^2 +(m^2-\omega^2)\|u_n\|_2^2}
\]
and so $\{u_n\}$ is bounded in $H^1(\mathbb{R}^3)$ and $\{{\bf A}_n\}$ is bounded in $(H_\mu)^3$.
Then we can conclude arguing as in the proof of Theorem \ref{ThMain1}.
\end{proof}

\begin{Lem}\label{le:19}
If $\{(u_n,{\bf A}_n)\}$ is a (PS) sequence for $J$ at level $\beta>0$, then $\|u_n\|_p\geq C >0$.
\end{Lem}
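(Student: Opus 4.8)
The plan is to argue by contradiction: suppose that along a subsequence $\|u_n\|_p\to 0$. I would first exploit the (PS) property to write $\partial_u J(u_n,{\bf A}_n)[u_n]=o_n(1)\|u_n\|$, which yields
\[
\|\nabla u_n\|_2^2+m^2\|u_n\|_2^2+\int|\ell\nabla\theta-q{\bf A}_n|^2u_n^2-\int(\omega-q\phi_n)^2u_n^2
=\int f(x,u_n)u_n+o_n(1)\|u_n\|.
\]
Using \eqref{bohhh} (so $0\le\phi_n\le\omega/q$) the left-hand side is bounded below by $\|\nabla u_n\|_2^2+(m^2-\omega^2)\|u_n\|_2^2$, while by \eqref{bdd1} the right-hand side is at most $\eps\|u_n\|_2^2+C_\eps\|u_n\|_p^p+o_n(1)\|u_n\|$. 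Since $\{(u_n,{\bf A}_n)\}$ is bounded by Lemma \ref{lemPSbounded}, choosing $\eps$ small gives $\|\nabla u_n\|_2^2+(m^2-\omega^2)\|u_n\|_2^2\le C\|u_n\|_p^p+o_n(1)=o_n(1)$, hence $u_n\to 0$ in $H^1(\R^3)$.

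Next I would propagate this vanishing to the auxiliary fields. By \eqref{eq:125} we get $\|\nabla\phi_n\|_2^2+\mu^2\|\phi_n\|_2^2\le C\|u_n\|_{12/5}^4\to 0$, and by \eqref{comevuoi} together with \eqref{eq:mixterm3} (or directly the chain of estimates in the proof of Lemma \ref{lem:J1_J2}) one obtains $\|\nabla\times{\bf A}_n\|_2^2+\mu^2\|{\bf A}_n\|_2^2\to 0$ and $\int|\ell\nabla\theta-q{\bf A}_n|^2u_n^2\le\ell^2\int u_n^2/r^2\to 0$ as well; note $\int u_n^2/r^2$ need not be controlled yet, but the relevant quantity $\int|\ell\nabla\theta-q{\bf A}_n|^2u_n^2$ is, since it is bounded by $\ell^2\int u_n^2/r^2 - \ell q\int\nabla\theta\cdot{\bf A}_nu_n^2$ and the latter combination is exactly $\|\nabla\times{\bf A}_n\|_2^2+\mu^2\|{\bf A}_n\|_2^2+\int|\ell\nabla\theta-q{\bf A}_n|^2u_n^2$ rearranged, which forces me instead to keep the whole $\hat H^1$-norm. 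The cleaner route: since $\{(u_n,{\bf A}_n)\}$ is bounded in $V$, we already have $\int u_n^2/r^2$ bounded, so from $u_n\to0$ in $H^1$ and \eqref{eq:estFn} each term in
\[
J(u_n,{\bf A}_n)=\frac12\|\nabla u_n\|_2^2+\frac{m^2-\omega^2}{2}\|u_n\|_2^2+\frac{q\omega}{2}\int\phi_nu_n^2+\frac12\|\nabla\times{\bf A}_n\|_2^2+\frac{\mu^2}{2}\|{\bf A}_n\|_2^2+\frac12\int|\ell\nabla\theta-q{\bf A}_n|^2u_n^2-\int F(x,u_n)
\]
is nonnegative except possibly the last, which is $o_n(1)$ by \eqref{eq:estFn}; hence $\beta=\lim_n J(u_n,{\bf A}_n)\le 0$, contradicting $\beta>0$.

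The main obstacle I anticipate is bookkeeping the $\hat H^1$-seminorm $\int u_n^2/r^2$: a priori $u_n\to0$ in $H^1(\R^3)$ does not force $\int u_n^2/r^2\to0$, so I cannot conclude that term vanishes. The fix is to observe it enters $J$ with a positive sign and thus can simply be discarded in the inequality $J(u_n,{\bf A}_n)\ge -\int F(x,u_n)=o_n(1)$; only its boundedness (from Lemma \ref{lemPSbounded}) is needed to control the cross term $\int\nabla\theta\cdot{\bf A}_nu_n^2$ via Hölder and the $L^6$-bound on ${\bf A}_n$. With that observation the contradiction with $\beta>0$ is immediate, so the statement follows.
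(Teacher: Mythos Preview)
There is a genuine gap in the final step. Your ``cleaner route'' establishes only the \emph{lower} bound
\[
J(u_n,{\bf A}_n)\ \ge\ -\int F(x,u_n)=o_n(1),
\]
which yields $\beta\ge 0$, not $\beta\le 0$. Discarding nonnegative terms can never produce the upper bound you need. To reach a contradiction you must show that each term in $J(u_n,{\bf A}_n)$ actually tends to zero, in particular
\[
\|\nabla\times{\bf A}_n\|_2^2+\mu^2\|{\bf A}_n\|_2^2\quad\text{and}\quad \int|\ell\nabla\theta-q{\bf A}_n|^2u_n^2,
\]
and nothing in your argument controls these. Note also that in this section the (PS) sequence is for $J$ on $V=\hat H^1_\sharp\times\mathcal{A}$, so ${\bf A}_n$ is a \emph{free} variable, not ${\bf A}_{u_n}$; thus the identities \eqref{Aueq}, \eqref{comevuoi}, \eqref{Auineq} are unavailable, and your first route via \eqref{comevuoi} was already off track for that reason.

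The paper closes the gap by exploiting the full (PS) information. First, keeping the term $\int|\ell\nabla\theta-q{\bf A}_n|^2u_n^2$ inside $\partial_u J(u_n,{\bf A}_n)[u_n]$ and expanding it via an \eqref{eq:mixterm3}-type estimate recovers control of $\int u_n^2/r^2$, so one gets $u_n\to 0$ in the full $\hat H^1$-norm (the term $\|{\bf A}_n\|_6^2\|u_n\|_3^2$ that appears is $o_n(1)$ by interpolation once $\|u_n\|_p\to 0$). Second, the condition $\partial_{\bf A} J(u_n,{\bf A}_n)[{\bf A}_n]=o_n(1)$, which you never invoke, gives
\[
\|\nabla\times{\bf A}_n\|_2^2+\mu^2\|{\bf A}_n\|_2^2+q^2\!\int|{\bf A}_n|^2u_n^2
= q\ell\!\int\nabla\theta\cdot{\bf A}_n\,u_n^2 + o_n(1)=o_n(1),
\]
the last equality using the same mixed-term estimate together with $u_n\to 0$ in $\hat H^1$. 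With $(u_n,{\bf A}_n)\to(0,{\bf 0})$ in $V$, continuity of $J$ then forces $J(u_n,{\bf A}_n)\to 0$, contradicting $\beta>0$.
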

\begin{proof}
Take any (PS) sequence $\{(u_n,{\bf A}_n)\}$ at level $\beta>0$.
Assume by contradiction that $u_n\to 0$ in $L^p(\mathbb{R}^3)$.
Then, the interpolation and Sobolev inequalities and the boundedness of the (PS) sequences imply that $u_n\to 0$ in $L^s(\mathbb{R}^3)$ for all $s\in[2,6)$.
	Moreover, since $\partial_{u} J(u_n,{\bf A}_n)[u_n]=o_n(1)$, $\{u_n\}$ and $\{{\bf A}_n\}$ are bounded and by H\"older and Sobolev inequalities, 
	\eqref{eq:mixterm3}, 
	Lemma \ref{le:phipsi} 
	and \eqref{eq:estimF0} we have
	\begin{align*}
	\partial_{u} J(u_n,{\bf A}_n)[u_n]
	+ q^2 \int \phi_n^2 u_n^2
	&\geq
	C\left(\|\nabla u_n\|_2^2+(m^2-\omega^2)\| u_n\|_2^2  + \ell^2 \int \frac{u_n^2}{r^2}
	- \|{\bf A}_n\|_6^2 \|u_n\|_3^2 - \|u_n\|_p^p\right)\\
	&=
	C\left(\|\nabla u_n\|_2^2+(m^2-\omega^2)\| u_n\|_2^2  + \ell^2 \int \frac{u_n^2}{r^2}\right)
	+o_n(1)
	\end{align*}
	and
	\[
	\partial_{u} J(u_n,{\bf A}_n)[u_n]
	+ q^2 \int \phi_n^2 u_n^2
	=o_n(1)
	+ q^2 \int \phi_n^2 u_n^2\\
	\leq
	o_n(1)
	+C\|u_n\|_2^2 = o_n(1)
	\]
	and so $u_n \to 0$ in $\hat{H}^1$.
	Moreover, since $\partial_{\bf A} J(u_n,{\bf A}_n)[{\bf A}_n]=o_n(1)$, arguing as before,
	\[
	\|\nabla\times{\bf A}_n\|_2^2 +\mu^2\|{\bf A}_n\|_2^2
	+ q^2\int |{\bf A}_n|^2 u_n^2 
	=o_n(1)
	+q\ell\int \nabla\theta\cdot{\bf A}_n u_n^2=o_n(1).
	\]
	Hence $	(u_n,{\bf A}_n)\to (0, {\bf 0}) $ in  $V$ and, by continuity, we get $J(u_n,{\bf A}_n)\to 0$ which contradicts the fact that $J(u_n,{\bf A}_n)\to \beta>0$.
\end{proof}

Thus we can conclude as follows.

\begin{proof}[Proof of Theorem \ref{ThMain2}]
In view of the previous lemmas given in this section, there exists $\{(u_n,{\bf A}_n)\}$, a bounded (PS) sequence for the functional $J$ at level $c>0$, with $c$ defined as in \eqref{MPTlevel}, such that $\|u_n\|_p\geq C >0$. 
Arguing as in the proof of Theorem \ref{ThMain1},
we get that, up to a subsequence 
$u_n\rightharpoonup u_0\neq 0$ in $\hat H^1_\sharp$ and ${\bf A}_n\rightharpoonup {\bf A}_0$ in $\mathcal{A}$ and
we show that $(u_0, \phi_{u_0}, {\bf A}_0)$ is a nontrivial critical point of $I$. 
Finally, we can conclude that $(u_0, {\bf A}_0)\in V$ is a nontrivial critical point of $J$ in $V$. In view of Lemma \ref{eq:lemNaturalConstr} and Proposition \ref{PropDitribSol}, $(u_0, \phi_{u_0}, {\bf A}_0)$ is a solution of \eqref{P} with $u_0\neq 0$,  $\phi_{u_0}\neq 0$ and  ${\bf A}_0\neq 0$. Observe that
\begin{align*}
c+o_n(1)
&=J(u_n,{\bf A}_n)\\
&=J(u_n,{\bf A}_n)-\frac14 \de_u J(u_n,{\bf A}_n)[u_n]
\\
&=
\frac14\|\nabla u_n\|_2^2 
+\frac14(m^2-\omega^2)\|u_n\|_2^2
+ \frac12\|\nabla\times{\bf A}_n\|_2^2
+ \frac12 \mu^2 \|{\bf A}_n\|_2^2
+\frac14 q^2 \int \phi_{u_n}^2 u_n^2
\\
& \qquad
+ \frac12 \int |\ell \nabla\theta-q{\bf A}_n|^2 u_n^2
+\frac 14 \int \left[f(x,u_n)u_n-  4F(x,u_n)\right],
\end{align*}
and hence  by Fatou Lemma, Lemma \ref{lemMPT} and the weak convergences, we get 
\[
c \geq J(u_0,{\bf A}_0)-\frac14 \de_u J(u_0,{\bf A}_0)[u_0] =J(u_0,{\bf A}_0)\geq \inf_{\mathcal{M}}J\geq c.
\]
\end{proof}

\section{The behaviour as $\mu\to 0$}\label{se:mu}

In this section we assume that $|\mu|\leq 1$, $F$ satisfies \eqref{SW1}-\eqref{SW3} and, additionally, \eqref{SW4} or \eqref{SW5},  and by 
the subscript or the superscript $\mu$ we denote the dependence on $\mu$ of functions, sets and functionals defined and used before.
For $\mu\neq 0$, the norms of  $V_{\mu}$'s are equivalent and so $V_{\mu}=V_1$. Moreover we have
$V_1\subset V_0$. 
\begin{proof}[Proof of Theorem \ref{ThMain3}]
Let $\mu\neq 0$. Note that
\[
{\bf K}_\mu({\bf A}^{\mu}_u)
\leq  {\bf K}_\mu({\bf A}^{1}_u)\leq {\bf K}_1({\bf A}^{1}_u),
\]
where ${\bf K}$ is defined in \eqref{KAPPA}.
Hence, by \eqref{bohhh} and the above inequality we have
\begin{align*}
\J_{\mu}(u) 
&=
\frac{1}{2} \|\nabla u\|_2^2
+\frac{m^2-\omega^2}{2}\|u\|_2^2
+\frac{q\omega}{2}\int \phi_u^{\mu} u^2
+ \frac{\ell^2}{2}\int\frac{u^2}{r^2}+ {\bf K}_\mu({\bf A}^{\mu}_u)
- \int F(x,u)\\
&\leq 
\frac{1}{2} \|\nabla u\|_2^2
+\frac{m^2}{2}\|u\|_2^2
+ \frac{\ell^2}{2}\int\frac{u^2}{r^2}+ {\bf K}_1({\bf A}^{1}_u)
- \int F(x,u)\\
&\leq
\frac{1}{2} \|\nabla u\|_2^2
+\frac{m^2}{2}\|u\|_2^2
+ \frac{\ell^2}{2}\int\frac{u^2}{r^2}
- \int F(x,u)
:=\tilde{\J}(u),
\end{align*}
being, by \eqref{Aueq} and \eqref{Auineq}, $
{\bf K}_1({\bf A}^{1}_{u})
\leq 0$.
\\
Let $u_{\mu} $ be a ground state of $\J_{\mu}$ and fix $u_0\neq 0$. By Lemma \ref{lem:3_1} we find $t_{\mu}>0$ such that $t_\mu u_0\in\cN_{\mu}$ and, using \eqref{eq:estimF2} if $f$ satisfies (\ref{SW4}), respectively \eqref{eq:estimF2n} if $f$ satisfies (\ref{SW5}), we have
$$0<\J_{\mu}(u_{\mu})\leq \J_{\mu}(t_\mu u_0)\leq 
\sup_{t\in [0,+\infty)}\tilde{\J}(t u_0)<+\infty.$$
Therefore $\{\J_{\mu}(u_{\mu})\}$ is bounded.
Moreover arguing as in the proof of Theorem \ref{ThMain1}, we infer that 
$\{u_{\mu}\}$ is bounded in $\hat{H}^1$ and $\{\ba_{u_\mu}^\mu\}$ in $(\D^{1,2}(\R^3))^3$.
%
%
%
\\
Arguing as in Lemma \ref{le:disc}, we infer that
\begin{equation*}\label{lem41:eq}
\inf_{\mu\neq 0} \|u_{\mu}\|_p >0
\end{equation*}
and so, as in the proof of Theorem \ref{ThMain1}, we find a family $\{y_{\mu}\}\subset\Z$ such that, denoting again $u_\mu$ the function $u_\mu(\cdot+y_\mu e_3)$, we have that
	\[
	u_{\mu}\rightharpoonup u_0\neq 0 \hbox{ in }\hat{H}^1_\sharp \hbox{ as }\mu\to 0.
	\]
	Since, by Lemma \ref{le:phipsi}, $\{\phi^\mu_{u_\mu}\}$ is bounded also in $\D^{1,2}(\R^3)$,
	\[
	\phi_{u_{\mu}}^{\mu}\rightharpoonup \phi_0 \hbox{ in } \D^{1,2}(\R^3)
	\hbox{ and }
	{\bf A}^\mu_{u_\mu}\rightharpoonup {\bf A}_0 \hbox{ in } (\D^{1,2}(\R^3))^3
	\]
	as $\mu\to 0$ for some $\phi_0\in (\D^{1,2}(\R^3))_\sharp$ and $\ba_0\in\mathcal{A}_0$.
Then, since for every $\mu\neq 0$
\begin{align*}
&\partial_u I_\mu(u_\mu, \phi_{u_{\mu}}^{\mu}, {\bf A}^\mu_{u_\mu}) [v] = 0,
\hbox{ for any } v\in (C_0^\infty(\RT\setminus \Sigma))_\sharp,
\\
&\partial_\phi I_\mu(u_\mu, \phi_{u_{\mu}}^{\mu}, {\bf A}^\mu_{u_\mu}) [w] = 0,
\hbox{ for any } w\in C_0^\infty(\RT),
\\
&\partial_{\bf A} I_\mu(u_\mu, \phi_{u_{\mu}}^{\mu}, {\bf A}^\mu_{u_\mu})  [{\bf V}] = 0
\hbox{ for any } {\bf V}\in \mathcal{A}_0^\infty,
\end{align*}
passing to the limit as $\mu\to0$ and the boundedness of $\{\phi^\mu_{u_\mu}\}$ and $\{\ba^\mu_{u_\mu}\}$
we get that $\phi_0=\phi_{u_0}^0$, $\ba_0=\ba_{u_0}^0$ and $(u_0, \phi_{u_0}^0, \ba_{u_0}^0)$ is a nontrivial solution of \eqref{P} for $\mu=0$.
\end{proof}

\begin{Rem}
We are not able to say if such {\em limit solution} $(u_0,\phi^0_{u_0},{\bf A}^0_{u_0})$ is a ground state for \eqref{P} with $\mu\neq 0$. One of the main difficulties is that, in general,  
it is not clear if, for a fixed $u\in\hat{H}^1_\sharp$, the map 
$\mu \mapsto \mathcal{J}_\mu (u)$ is increasing or not. In fact, if we write
\begin{equation}
\label{lastrem}
\mathcal{J}_\mu(u)
=\frac{1}{2} \|\nabla u\|_2^2
+\frac{m^2}{2}\|u\|_2^2
-K_\mu (\phi_u^{\mu})
+ \frac{\ell^2}{2}\int\frac{u^2}{r^2}
+ {\bf K}_\mu({\bf A}^{\mu}_u)
- \int F(x,u),
\end{equation}
we have that, if $\mu_1 \leq \mu_2$,
\[
{\bf K}_{\mu_1}({\bf A}^{\mu_1}_u)
\leq  {\bf K}_{\mu_1}({\bf A}^{\mu_2}_u)
\leq {\bf K}_{\mu_2}({\bf A}^{\mu_2}_u)
\]
and
\[
K_{\mu_1} (\phi_u^{\mu_1})
\leq K_{\mu_1} (\phi_u^{\mu_2})
\leq K_{\mu_2} (\phi_u^{\mu_2})
\]
but, in \eqref{lastrem}, they appear with the opposite sign.

\end{Rem}

\begin{Rem}
Observe that we can get a similar result as in Theorem \ref{ThMain3}, for any family $\{(u_{\mu},\phi_{\mu}, {\bf A}_{\mu})\}$ of solutions of \eqref{P} such that $\{I_\mu(u_{\mu},\phi_{\mu}, {\bf A}_{\mu})\}$ is bounded above.
\end{Rem}

\subsection*{Acknowledgement}
This work has been partially carried out during a stay of P.D. in Torun and of J.M. in Bari.
They would like to
express their deep gratitude to the Departments
for the support and warm hospitality. 
Moreover the authors would like to thank Professor Donato Fortunato for stimulating discussions.

\end{document}